\theoremstyle{plain}
\newtheorem{theorem}{Theorem}[section]
\theoremstyle{definition}
\newtheorem{definition}[theorem]{Definition}
\newtheorem{assumption}[theorem]{Assumption}
\theoremstyle{remark}
\newtheorem{remark}[theorem]{Remark}
\icmltitlerunning{Duality and Policy Evaluation in Distributionally Robust Bayesian Diffusion Control}
\newcommand{\RR}{\mathbb R}
\newcommand{\PP}{\mathbb P}
\newcommand{\KL}{\mathrm{KL}}
\begin{document}

\twocolumn[
\icmltitle{Duality and Policy Evaluation in Distributionally Robust Bayesian Diffusion Control}

\icmlsetsymbol{equal}{*}

% Equal contribution marker (ICML requires this)
\DeclareRobustCommand{\equal}{\textsuperscript{*}}

\begin{icmlauthorlist}    
    \icmlauthor{Jose Blanchet}{stanford}
    \icmlauthor{Jiayi Cheng}{nyu}
    \icmlauthor{Yuewei Ling}{stanford}
    \icmlauthor{Hao Liu}{stanford}
    \icmlauthor{Yang Liu}{cuhk}
\end{icmlauthorlist}

\icmlaffiliation{stanford}{Stanford University, CA 94305, USA}
\icmlaffiliation{nyu}{New York University, NY 10003, USA}
\icmlaffiliation{cuhk}{The Chinese University of Hong Kong, Shenzhen, Guangdong 518172, China}

\icmlcorrespondingauthor{Jose Blanchet}{jose.blanchet@stanford.edu}
\icmlcorrespondingauthor{Jiayi Cheng}{jiayicheng@nyu.edu}
\icmlcorrespondingauthor{Yuewei Ling}{yueweiling@stanford.edu}
\icmlcorrespondingauthor{Hao Liu}{haoliu20@stanford.edu}
\icmlcorrespondingauthor{Yang Liu}{yangliu16@cuhk.edu.cn}

% ===========================
% Anonymous submission: DO NOT include author information
% Keep author blocks commented out in the blind version.
% ===========================
% \begin{icmlauthorlist}
%   \icmlauthor{First Author}{aff1}
%   \icmlauthor{Second Author}{aff2}
% \end{icmlauthorlist}
% \icmlaffiliation{aff1}{Department, University, City, Country}
% \icmlaffiliation{aff2}{Department, University, City, Country}
% \icmlcorrespondingauthor{First Author}{email@domain.com}

\icmlkeywords{Distributionally Robust Optimization, Bayesian Control, Duality, Policy Evaluation}

\vskip 0.3in
]

% Use ONE of the following lines. DO NOT remove the command.
\printAffiliationsAndNotice{}  % required even if empty

\begin{abstract}
We study diffusion control problems under parameter uncertainty. Controllers based on plug-in estimation can be brittle due to potential distribution shifts. Bayesian control with a prior on the parameters offers a formulation with beliefs about such shifts. However, as with any Bayesian model, the prior may be misspecified. To mitigate misspecification and reduce over-pessimism compared to classical robust control approaches (e.g. \citet{hansen2008robustness}), we propose a distributionally robust Bayesian control (DRBC) formulation in which an adversary perturbs the prior within a divergence neighborhood of a baseline prior. We develop a strong duality result that reduces the distributionally robust prior evaluation to a low-dimensional optimization and yields a practical simulation-based policy evaluation and learning procedure with structured policy parameterizations. We validate the efficiency of the algorithm on a synthetic linear-quadratic control example and real-data portfolio selection.
\end{abstract}

\section{Introduction}
\label{sec:intro}
Decision-making under uncertainty is a core challenge in reinforcement learning and control. In many practical
settings, agents must act without knowing key environment parameters (e.g., transition dynamics, reward biases).
We consider a diffusion control problem for which a controller aims at maximizing running and terminal objectives by
making decisions informed by observations.
Since the controller cannot directly observe model parameters, the unknown parameter or factor is either estimated and then plugged into the closed-form solutions or it is
modeled as an unobservable random element with a prior distribution. These two approaches, known as plug-in methods and Bayesian control,
are well studied in the control literature and give rise to sophisticated policies that naturally work well if the estimation is accurate or 
the full Bayesian model is well specified (and is explicitly solvable in some canonical cases).

However, if the model is not well specified, these approaches will often deliver suboptimal results.
Adversarial approaches have been used to mitigate the impact of model misspecification. The controller interacts
with a fictitious adversary to maximize the value function, while the adversary selects a worst-case probability to
ensure policy robustness. Distributionally robust control (DRC) \citep{HansenSargent2001, hansen2008robustness}
is one such method, and it is built on finding robust formulations that are tractable in the sense of leading
to a dynamic programming principle. In exchange for this type of tractability, the approach leads to very
pessimistic policies, because the adversary's power is replenished at every point in time. This also makes
calibrating the size of the distributional uncertainty difficult, because small variations on this parameter
have a significant impact on performance.

To address over-conservatism, we consider a Distributionally Robust Bayesian Control (DRBC) formulation in which
we only build distributional robustness around the prior distribution in the Bayesian control formulation. This
allows us to combat pessimistic policies at the expense of losing the dynamic programming principle, and hence
we develop alternative methods for computing the optimal policy.
In the main text, we focus on the important case where the adversary chooses a prior from a
Kullback--Leibler uncertainty set, which leads to a particularly simple dual representation and an efficient
algorithm. Our formulation and analysis extend to general $\phi$-divergence neighborhoods; we defer these
extensions (including the Cressie--Read family as a concrete example) to the appendix.
Our motivating application is continuous-time control with unknown dynamics, but DRBC applies broadly to problems
where robustness to prior misspecification is critical. In particular, our DRBC formulation applies to many machine learning examples including classical control, reinforcement learning \citep{fleming2006controlled, yong1999stochastic, borkar2008stochastic}, finance \citep{ghosn1996multitask, borodin2003beatbeststock, qiu2015robustportfolio, luo2018efficientonlineportfolio, tsai2023datadependentops, lin2024msparsesr}, and operations research models \citep{harrison2004dynamic}.

\subsection{Our Contributions}
Our contributions are summarized as follows. 
\begin{itemize}
    \item We formulate a DRBC problem for misspecified priors in the context of diffusion control using a $\phi$-divergence uncertainty set around the prior.
    \item We prove a strong duality result that reformulates DRBC into a tractable optimization problem. We provide sample complexity results showing that policy evaluation is possible with the canonical rate $\mathcal{O}_p\left(n^{-1/2}\right)$ via a novel randomized multi-level Monte Carlo (rMLMC) estimator. 
    \item We develop a policy learning approach that designs the policy parameterization case-by-case using problem
structure. We illustrate this with a synthetic linear--quadratic (LQ) example using structured
belief-feature feedback and the Bayesian Merton problem using a neural-network parameterization. 

    \item We present simulation results to demonstrate the accuracy of our deep learning method and confirm the $\mathcal{O}_p\left(n^{-1/2}\right)$ convergence rate. Numerical results show the robustness of DRBC and it overcomes overpessimism in both synthetic and real-data experiments for LQ and Merton.
\end{itemize}

\subsection{Related Work}
\label{relatedwork}

Distributionally Robust Methods have been well-studied in a wide range of areas. For example, Distributionally Robust Optimization (DRO) (i.e. the supervised learning case) can be shown to recover a wide range of successful statistical estimators (including sqrt-Lasso, AdaBoost, group Lasso etc.), by carefully choosing the uncertainty set, often in terms of $\phi$-divergence or Wasserstein sets (see \citet{Blanchet2021WassersteinDRO, Blanchet2024DRO}). Refer also to \citet{rahimian2019distributionally} and \citet{Bayraksan2015} for comprehensive reviews on DRO. 

%\citep{JohnDuchi} and Wasserstein balls \citep{Abadeh2018, Blanchet2019, Gao2016}.  

Motivated by problems in areas such as economics and finance, among others, \citep{HansenSargent2001, hansen2008robustness, RB1, RB2} , DRO has been generalized to the setting of dynamic decision-making with model uncertainty. This situation is significantly more complicated and the literature has focused mostly on developing formulations that are amenable to dynamic programming (DP), giving rise to Distributionally Robust Control (DRC) and Distributionally Robust Markov Decision Processes (DRMDP). The availability of a dynamic programming principle facilitates the development of Distributionally Robust Reinforcement Learning (DRRL) and related settings \citep{NianSi, wang2022policy,Wang2023, Liu2022,Zhou2021, lu2024drrl}. 

However, to develop a DP in the DRC, DRMDP, and DRRL settings, the adversary gets its power replenished at every point in time, making the formulations pessimistic. That occurs at every point in time, thus making these formulations overconservative. 

\textit{In contrast, our formulation combines Bayesian stochastic control with DRO by introducing a single
distributional uncertainty set around the prior distribution. This combats overconservative solutions, but at
the expense of the dynamic programming principle. We develop formulations and techniques that make the optimal
solution learnable by exploiting the underlying control-problem structure.}

Computations of static DRO problems has been studied by \citet{levy2020large, blanchet2020semi, wang2021sinkhorn}. 
 We also mention the literature on RL in finance \citep{Hambly2023}, Bayesian Optimization \citep{Daulton22} and Distributionally Robust Bayesian Optimization (DRBO, \citep{pmlr-v108-kirschner20a}). Our setting is different from DRBO. We work in continuous time, which allows us to use stochastic analysis (via the martingale method and other techniques) to obtain convenient expressions to define a suitable loss for the optimal strategy. Our formulation is also offline and not online as in DRBO. We wish to efficiently evaluate policies for the game corresponding to our formulation. In addition, the work on Bayesian Distributionally Robust Optimization (BDRO, \citet{doi:10.1137/21M1465548}) is more related to our setting here, but it focuses on a static setting. \citet{blanchet_2025_bdrobust_merton} applies the DRBC idea to a specific financial setting and its method is only limited to that setting; In contrast, the DRBC solution method developed in this paper applies to more broad settings.

\section{Bayesian Control and Robust Priors Setup}
\label{sec:setup}
% Our setup has an unknown latent parameter
% independent of the driving Brownian motion, and is only partially revealed through an observed process. This randomizes the hard-to-estimate parameter in standard control problems and can be viewed as a Bayesian idea. 
\subsection{Bayesian Diffusion Control}
\label{sec:bayes}
We use \emph{control}, \emph{policy}, and \emph{strategy} interchangeably.
Let $(\Omega,\tilde{\mathcal F},P)$ be a complete probability space supporting:
(i) an $\RR^{m'}$-valued Brownian motion $W=(W_t)_{t\ge 0}$, and
(ii) the hard-to-estimate latent randomized parameter $B\in\RR^m$ (this can be a general random element, where for simplicity, we restrict to random vectors here) such that $B\perp\!\!\!\perp W$ under $P$.
We write $B\sim \mu$, where $\mu\in\mathcal P(\RR^m)$ is a baseline prior.
Let the \emph{full} filtration be the augmentation of $\mathcal G_t := \sigma(B, W_s: 0\le s\le t).$
We assume the controller observes a filtration $\mathcal F=(\mathcal F_t)_{t\ge 0}$ satisfying the usual
conditions and $\mathcal F_t\subseteq \mathcal G_t$ (the inclusion can be strict).
An admissible control $\pi=(\pi_t)_{t\ge 0}$ takes values in $U\subset\RR^k$ and is
$\mathcal F$-progressively measurable with $\int_0^T \|\pi_t\|_2^2\,dt < \infty, P\text{-a.s.}$

Denote the class of admissible controls by $\mathcal A(x_0)$.
Given $\pi\in\mathcal A(x_0)$, the controlled state process $X^\pi=(X_t^\pi)_{t\in[0,T]}$ (taking values in
$\RR^d$) satisfies the stochastic differential equation (SDE) with $X_0=x_0$ and
\begin{equation}
\label{eq:sde}
dX_t = a(t,B,X_t,\pi_t)\,dt + b(t,B,X_t,\pi_t)^\top dW_t,
\end{equation}
where $a,b$ are such that \eqref{eq:sde} is well-posed and the solution is $\mathcal F$-adapted.
We assume all integrability conditions needed for expectations below.
With running reward $g$ and terminal reward $h$, the Bayesian diffusion control objective is
\begin{equation}
\label{eq:bayes}
V^\mu(x_0)
:=\sup_{\pi\in\mathcal A(x_0)}
E^{P}\!\left[
\int_0^T g(t,X_t^\pi,\pi_t)\,dt + h(X_T^\pi)
\right].
\end{equation}
Let $\{P^b\}_{b\in\RR^m}$ denote a regular conditional probability of $P$ given $B=b$.
For a fixed policy $\pi$, define the conditional value for each $ b\in\RR^m$,
\begin{equation}
\label{eq:Zb}
Z_\pi(b)
:= E^{P^b}\!\left[
\int_0^T g(t,X_t^\pi,\pi_t)\,dt + h(X_T^\pi)
\right].
\end{equation}
Therefore,
\begin{equation}
\label{eq:disintegrate}
E^{P}\!\left[
\int_0^T g(t,X_t^\pi,\pi_t)\,dt + h(X_T^\pi)
\right]
=\int_{\RR^m} Z_\pi(b)\,\mu(db).
\end{equation}

\subsection{Distributionally Robust Bayesian Control via KL Prior Uncertainty}
\label{sec:drbc}

A baseline prior $\mu$ is rarely correct in practice.
We consider \emph{distributionally robust Bayesian control} (DRBC), where an adversary perturbs the prior on $B$
\emph{once at time $0$} while leaving the conditional dynamics given $B=b$ unchanged.

In the main paper we focus on a KL ambiguity set around $\mu$ with $\KL(\nu\|\mu):=\int \log\!\Big(\frac{d\nu}{d\mu}\Big)\,d\nu$:
\begin{equation}
\label{eq:klball}
\mathcal U_{\mathrm{KL}}(\mu,\delta)
:=\Big\{\nu\in\mathcal P(\RR^m): \KL(\nu\|\mu)\le \delta\Big\},
\end{equation}
with the convention $\KL(\nu\|\mu)=+\infty$ if $\nu\not\ll \mu$.

For a fixed policy $\pi$, define the robust value $\mathcal R_\delta(\pi)
:= \inf_{\nu\in \mathcal U_{\mathrm{KL}}(\mu,\delta)} \int_{\RR^m} Z_\pi(b)\,\nu(db),$
and the DRBC problem $V^{\mathrm{DRBC}}(x_0)=\sup_{\pi\in\mathcal A(x_0)} \mathcal R_\delta(\pi).$
The analysis extends to general $\phi$-divergence balls by replacing the
log-exponential transform in the dual by a convex conjugate term.
We treat the Cressie--Read family as a concrete non-KL example in appendix.

Another useful viewpoint is to define the uncertainty set at the level of probability measures on the underlying
space while ensuring that only the law of the latent factor $B$ is perturbed and the Brownian structure of $W$
is preserved. Let $\mathcal{P}(\Omega,\tilde{\mathcal F})$ be the set of probability measures on
$(\Omega,\tilde{\mathcal F})$ and define the subset of $\mathcal{P}(\Omega,\tilde{\mathcal F})$ such that 
\[
\mathcal Q_{\mathrm{KL}}(\mu,\delta)
=\left\{
\begin{array}{@{}l@{}}
Q(B\in A)=\nu(A),\ \forall A\in\mathcal{B}(\RR^{m}),\\
\text{for some }\nu\in\mathcal{P}(\RR^{m}), Q\ll P,\\
\KL(\nu\|\mu)\le\delta,\quad B\perp\!\!\!\perp W. \text{ Under }Q,\\ 
W\ \text{is a standard Brownian motion}.
\end{array}
\right\}.
\]
Endow $\mathcal Q_{\mathrm{KL}}(\mu,\delta)$ with the weak topology \citep{billingsley1986probability} and identify measures with the
same $B$-marginal:
\begin{equation}\label{relation}
Q_1\sim Q_2 \quad \Longleftrightarrow \quad Q_1\circ B^{-1}=Q_2\circ B^{-1}.
\end{equation}
The DRBC uncertainty set can then be taken as the quotient space $\mathcal U_{\mathrm{KL}}(\mu,\delta):=\mathcal Q_{\mathrm{KL}}(\mu,\delta)/\sim$,
which makes precise that DRBC is a reweighting of the prior on $B$ while keeping the conditional diffusion
dynamics (given $B$) unchanged.

\begin{theorem}\label{uncertaintyset}
$\mathcal U_{\mathrm{KL}}(\mu,\delta)$ is well-defined. For any $Q\in\mathcal U_{\mathrm{KL}}(\mu,\delta)$, there exists $\nu\ll\mu$ such that $\frac{dQ}{dP}=\frac{d\nu}{d\mu}(B)$ $P\text{-a.s.}$ and $\text{KL}(Q\|P)=\text{KL}(\nu\|\mu).$
\end{theorem}

In certain DRBC settings, one can justify a minimax interchange, yielding
\(
\sup_{\pi\in\mathcal A(x_0)}\inf_{\nu\in\mathcal U(\mu,\delta)} \int Z_\pi(b)\,\nu(db)
=
\inf_{\nu\in\mathcal U(\mu,\delta)}\sup_{\pi\in\mathcal A(x_0)} \int Z_\pi(b)\,\nu(db).
\)
When the inner Bayesian control problem admits a closed-form value for each candidate prior, this swap can further reduce DRBC to a constrained \emph{distributional optimization} over \(\nu\) \citep{blanchet_2025_bdrobust_merton}.
However, the interchange alone does \emph{not} ensure tractability for general diffusion control if Bayesian control value is not available in closed form (e.g. linear quadratic controls ensures a swap but no closed form).
Therefore, we develop a \emph{strong duality} theory that directly reformulates the inner robust prior evaluation \(\inf_{\nu\in\mathcal U(\mu,\delta)} \int Z_\pi(b)\,\nu(db)\) for a fixed policy \(\pi\). One can formulate DRBC using Wasserstein ambiguity around the prior, which is often a natural geometric choice; however, this induces significantly heavier inner optimization \citep{Blanchet2019}. In contrast, divergence ambiguity admits a representation that is particularly amenable to Monte Carlo estimators, motivating our focus on divergence balls.

\section{Strong Duality and Policy Evaluation}
\label{sec:duality}

This section first derives a strong duality representation for the inner robust prior problem $\mathcal R_\delta(\pi)$.
This dual form will be the starting point for policy evaluation and the Monte Carlo method.

\begin{theorem}
\label{thm:kl-dual}
Fix $\pi\in\mathcal A(x_0)$ and let $Z_\pi(b)$ be defined in \eqref{eq:Zb}.
Assume that for every $\lambda>0$ the integral
$\int_{\RR^m}\exp\!\left(-\frac{Z_\pi(b)}{\lambda}\right)\mu(db)$ is well-defined in $(0,\infty]$. Then
\begin{align}
\label{eq:kl-dual-lam0}
&\mathcal{R}_{\delta}(\pi)\notag\\
&=
\sup_{\lambda \ge 0}
\left\{
-\lambda\,\delta
-\lambda\log\!\left(\int_{\RR^m}\exp\!\left(-\frac{Z_\pi(b)}{\lambda}\right)\mu(db)\right)
\right\},
\end{align}
where the map in \eqref{eq:kl-dual-lam0} is extended at $\lambda=0$ by $\operatorname*{ess\,inf}_{b\sim\mu} Z_\pi(b),$ the essential infimum of the random variable $Z_\pi(b)$.
\end{theorem}

Therefore, the DRBC problem becomes a nested maximization problem
\[
\sup_{\pi\in\mathcal A(x_0)}\sup_{\lambda \ge 0}
\left\{
-\lambda\,\delta
-\lambda\log\!\left(\int_{\RR^m}\exp\!\left(-\frac{Z_\pi(b)}{\lambda}\right)\mu(db)\right)
\right\},
\]
which is generally intractable in practice.
In particular, the difficulties are twofold: the nested maximization is highly nonconvex and even
doing policy learning with fixed $\lambda$ is hard.
Motivated by alternating optimization schemes commonly used in distributionally robust learning \citep{NianSi},
we adopt a two-step procedure that alternates between \emph{policy evaluation} and \emph{policy learning}.
Concretely, given a candidate policy $\pi$, we maximize the dual objective in
\eqref{eq:kl-dual-lam0} over $\lambda \ge 0$ (policy evaluation) to obtain an approximate optimizer
$\hat\lambda(\pi)$ and an estimate of $\mathcal R_\delta(\pi)$.
Then, holding $\lambda=\hat\lambda(\pi)$ fixed, we update $\pi$ by ascending an empirical estimate of the
corresponding dual objective (policy learning), using problem-structured parameterizations.
We repeat these alternating updates until convergence of the dual variable and the robust value, yielding
a well-approximated robust policy $\hat\pi_{\mathrm{DRBC}}\approx \pi_{\mathrm{DRBC}}$.

To ensure the alternative maximization is well-posed and numerically stable, we impose the following mild regularity conditions for each fixed $\pi$.

\begin{assumption}
\label{asmp:Z-finite-lb}
Fix $\pi\in\mathcal A(x_0)$. Assume that $Z_\pi(b)$ defined in \eqref{eq:Zb} is finite for $\mu$-a.e.\ $b$ and
integrable under $\mu$.
Moreover, define the essential infimum $m_\pi:=\operatorname*{ess\,inf}_{b\sim\mu} Z_\pi(b),$
and assume $m_\pi>-\infty$.
\end{assumption}

\begin{assumption}
\label{asmp:Z-moment}
There exist constants $w>0$ and $M_\pi\in(0,\infty)$ such that
$\int_{\RR^m} |Z_\pi(b)|^{3(1+w)}\,\mu(db)\le M_\pi.$
\end{assumption}

\begin{assumption}
\label{asmp:no-atom}
Let 
$p_\pi:=\mu\big(\{b\in\RR^m:\ Z_\pi(b)=m_\pi\}\big).$
Assume $p_\pi<e^{-\delta}$.
\end{assumption}

Now we give some intuitions.
Assumption~\ref{asmp:Z-finite-lb} is a basic well-posedness condition for classical control problems (e.g., when the prior is degenerate).
Assumption~\ref{asmp:Z-moment} is a mild light-tail requirement under the baseline prior; it is tailored to the
moment conditions needed to apply our Monte Carlo estimator later.
Assumption~\ref{asmp:no-atom} rules out the boundary optimizer being $0$. In many continuous-prior settings, this condition
holds automatically.

Under Assumptions~\ref{asmp:Z-finite-lb}--\ref{asmp:no-atom}, the optimizer $\lambda^*(\pi)$ lies in a compact interval
$[\underline\lambda_\pi,\overline\lambda_\pi]\subset(0,\infty)$ (see Appendix~\ref{app:lambda-bounds}).
Moreover, for fixed $\pi$, the KL dual objective in \eqref{eq:kl-dual-lam0} is strictly concave in $\lambda>0$
whenever $Z_\pi(b)$ is not $\mu$-a.s.\ constant, hence $\lambda^*(\pi)$ is unique.

For policy evaluation, we assume access to a simulator $\mathcal S$ that (i) draws $b\sim\mu$, and (ii) given
$b$ and $\pi$, generates i.i.d.\ trajectory-level samples whose expectation equals $Z_\pi(b)$.
We denote these i.i.d.\ samples by $\{\widehat Z^{\,b}_j\}_{j\ge 1}$, so that
$E[\widehat Z^{\,b}_j]=Z_\pi(b)$. This can be achieved using standard methods \citep{YG23, RheeGlynn2015}.

Define for $\lambda>0$ the inner transform
\(
\Phi_\lambda(z):=\exp(-z/\lambda).
\)
The quantity inside the logarithm in $\mathcal R_\delta(\pi)$ is
\begin{equation}
\label{eq:Mpi}
M_\pi(\lambda):=\int_{\RR^m}\Phi_\lambda\!\left(Z_\pi(b)\right)\mu(db)
=E_{b\sim\mu}\!\left[\exp\!\left(-\frac{Z_\pi(b)}{\lambda}\right)\right].
\end{equation}
We estimate $M_\pi(\lambda)$ using a randomized multilevel Monte Carlo (rMLMC) construction
\citep{Giles2008,RheeGlynn2015, Yanan}.
For each outer draw $b\sim\mu$, sample an independent level
$N^b=\widetilde N+n_0$ where $n_0\in\mathbb{N}_{\ge 0}$ and $\widetilde N\sim\mathrm{Geo}(R)$ with
$R\in(1/2,3/4)$, and generate $2^{N^b+1}$ i.i.d.\ copies $\{\widehat Z^{\,b}_j\}_{1\le j\le 2^{N^b+1}}$.
Let $S^b_\ell:=\sum_{j=1}^{\ell}\widehat Z^{\,b}_j$ and define the odd/even partial sums
$S^{O,b}_\ell:=\sum_{j=1}^{\ell}\widehat Z^{\,b}_{2j-1}$ and $S^{E,b}_\ell:=\sum_{j=1}^{\ell}\widehat Z^{\,b}_{2j}$.
For $N^b\ge n_0$, set
\[
\Delta_{N^b}^b
:=
\Phi_\lambda\!\left(\frac{S_{2^{N^b+1}}^b}{2^{N^b+1}}\right)
-\frac{1}{2}\left(
\Phi_\lambda\!\left(\frac{S_{2^{N^b}}^{O,b}}{2^{N^b}}\right)
+
\Phi_\lambda\!\left(\frac{S_{2^{N^b}}^{E,b}}{2^{N^b}}\right)
\right),
\]
and define the single-sample rMLMC estimator
\begin{equation}
\label{eq:rmlmc-single}
\mathcal E^{b}_\lambda
:=
\Phi_\lambda\!\left(\frac{S^b_{2^{n_0}}}{2^{n_0}}\right)
+\frac{\Delta_{N^b}^b}{p(N^b)},
\end{equation}
where $p(\cdot)$ denotes the probability mass function of $N^b$.
Given $n$ i.i.d.\ outer samples $b_1,\dots,b_n\sim\mu$, we estimate \eqref{eq:Mpi} by
\begin{equation}
\label{eq:rmlmc-outer}
\widehat M_{n,\pi}(\lambda):=\frac{1}{n}\sum_{i=1}^n \mathcal E^{b_i}_\lambda.
\end{equation}
We then use the plug-in estimator of the KL dual objective,
\begin{equation}
\label{eq:kl-plugin}
\widehat f_{n,\pi}(\lambda):=-\lambda\delta-\lambda\log\!\big(\widehat M_{n,\pi}(\lambda)\big),
\end{equation}
and define
\begin{equation}
\widehat{\mathcal R}_\delta(\pi):=\sup_{\lambda>0}\widehat f_{n,\pi}(\lambda).
\end{equation}
While \eqref{eq:kl-plugin} is generally biased at finite $n$ due to the logarithm, it is consistent and admits a
$\sqrt n$--central limit theorem (CLT) under our assumptions; details are deferred to the appendix. We also remark that the parameters $R$ and $n_0$ are fixed small numbers once chosen, thus there is no scaling issues with our rMLMC method.

\begin{assumption}
\label{asmp:finitevar}
The prior $\mu$ is compactly supported with a continuous density, and the map
$b\mapsto Z_\pi(b)$ is continuously differentiable and injective on the support of $\mu$.
\end{assumption}

\begin{remark}
Assumption~\ref{asmp:finitevar} is a convenient sufficient condition to ensure that the rMLMC estimator
$\mathcal E^{B}_\lambda$ has finite variance.
This is satisfied by common examples with continuous functional parameters including the Merton and linear quadratic control problems. In many cases, the prior can be selected in some belief range, thus compact support is reasonable.
\end{remark}

\begin{algorithm}[t]
\caption{rMLMC policy evaluation for the KL case}
\label{alg:kl-eval}
\textbf{Input:} policy $\pi$, radius $\delta$, prior $\mu$, rMLMC parameters $(R,n_0)$,
sample size $n$, initial $\lambda_0>0$, step sizes $\{\alpha_k\}_{k\ge 0}$.\\
\textbf{Output:} $\widehat{\mathcal R}_\delta(\pi)$ and $\widehat\lambda(\pi)$.
\begin{itemize}
\item Initialize $\lambda\leftarrow\lambda_0$.
\item \textbf{repeat} for $k=0,1,2,\dots$
\begin{itemize}
\item Draw i.i.d.\ $b_1,\dots,b_n\sim\mu$ and compute $\widehat M_{n,\pi}(\lambda)$ via
\eqref{eq:rmlmc-single}--\eqref{eq:rmlmc-outer}.
\item Form the plug-in objective $\widehat f_{n,\pi}(\lambda)$ in \eqref{eq:kl-plugin}.
\item Update $\lambda \leftarrow \lambda+\alpha_k\,\widehat g_k(\lambda)$,
where $\widehat g_k(\lambda)$ is a (stochastic) ascent direction for $\widehat f_{n,\pi}(\lambda)$.
\end{itemize}
\item \textbf{until} $\lambda$ converges.
\item Set $\widehat\lambda(\pi)\leftarrow\lambda$ and return
$\widehat{\mathcal R}_\delta(\pi)=\widehat f_{n,\pi}(\widehat\lambda(\pi))$.
\end{itemize}
\end{algorithm}

The following CLT provides a theoretical guarantee for our rMLMC-based policy evaluation:
it shows that $\widehat{\mathcal R}_\delta(\pi)$ achieves the canonical $\mathcal O_p(n^{-1/2})$ accuracy.

\begin{theorem}
\label{thm:kl-clt-main}
Fix $\pi\in\mathcal A(x_0)$ and suppose Assumptions~\ref{asmp:Z-finite-lb}--\ref{asmp:finitevar} hold.
Assume further that there exists a constant
$\underline z\in\RR$ such that $\widehat Z^{\,b}_j\ge \underline z$ almost surely for all $b$ and $j$.
Then, for the estimator \eqref{eq:kl-plugin},
\[
\sqrt n\left(\widehat{\mathcal R}_\delta(\pi)-\mathcal R_\delta(\pi)\right)
\Rightarrow
\mathcal N\!\left(0,\ \sigma^2_{\pi}\right),
\]
where $\sigma^2_{\pi}
=
\frac{(\lambda^*(\pi))^2\,\text{Var}\!\left(\mathcal E^{B}_{\lambda^*(\pi)}\right)}
{\left(M_{\pi}(\lambda^*(\pi))\right)^2} < \infty,$ $M_\pi(\lambda)$ is defined in \eqref{eq:Mpi}, $\mathcal E^{B}_\lambda$ is defined in
\eqref{eq:rmlmc-single}, $\Rightarrow$ denotes convergence in distribution, and $\mathcal{N}(0, \sigma^2)$ denotes a Gaussian distribution with mean 0 and variance $\sigma^2$.
\end{theorem}

The additional lower-bound condition is natural in our setting.
In many diffusion control applications the simulator returns a realized performance measure such as a terminal
utility, a (negative) cost, or a reward shifted by a baseline, which is typically bounded below on the relevant
state/action domain. Further details and generalizations of this section are deferred to the appendix.

\section{Policy Learning}
\label{sec:policy-learning}

Our policy learning approach follows a simple principle: whenever classical or Bayesian control structure is
available (e.g., feedback form, dependence on a low-dimensional belief state), we use it to design a compact
parametric policy class and then optimize its parameters by backpropagation through a simulator. We consider two
instantiations: a linear--quadratic (LQ) example and the Bayesian Merton portfolio problem.

\subsection{Linear--Quadratic Example (DRBC--LQ)}
\label{sec:lq}
In the classical continuous-time LQ regulator, the drift matrix is known and the optimal control is a linear
state feedback derived from a Riccati equation. Concretely, for known $A\in\RR^{d\times d}$ and matrices
$G\in\RR^{d\times k}$, $\Sigma\in\RR^{d\times d}$, the controlled diffusion is
\begin{equation}
\label{eq:lq-classical}
dX_t=(A X_t+G u_t)\,dt+\Sigma\,dW_t,\qquad X_0=x_0,
\end{equation}
and one typically minimizes the quadratic cost
$E\!\big[\int_0^T (X_t^\top QX_t+u_t^\top Ru_t)\,dt + X_T^\top Q_T X_T\big]$ with $Q,Q_T\succeq 0$ and $R\succ 0$.

In our DRBC--LQ instance, the hard-to-estimate component is the drift. We can view it as a random matrix, but for
simplicity, we randomize it by introducing a latent parameter $\theta\in\RR^m$ with baseline prior $\theta\sim\mu$,
independent of $W$, and define the low-dimensional drift parameterization
\begin{equation}
\label{eq:lq-A-theta}
A(\theta):=A_0+\sum_{i=1}^{m}\theta_i A_i,
\end{equation}
for fixed matrices $A_0,\dots,A_m\in\RR^{d\times d}$. The dynamics become
\begin{equation}
\label{eq:lq-dyn}
dX_t=\big(A(\theta)X_t+G u_t\big)\,dt+\Sigma\,dW_t,\qquad X_0=x_0.
\end{equation}
We assume full-state observation with filtration $\mathcal F_t:=\sigma(X_s:0\le s\le t)$ (augmented), and
admissible controls are $\mathcal F$-progressively measurable $u$ with $\int_0^T\|u_t\|_2^2dt<\infty$ a.s.
We use the reward convention
\begin{align}
\label{eq:lq-J}
&J(u;\theta)\notag\\
&=E\!\left[
-\int_0^T \big( X_t^\top Q X_t + u_t^\top R u_t \big)\,dt
- X_T^\top Q_T X_T
\;\middle|\;\theta
\right],
\end{align}
so that for fixed $u$ the conditional value is $Z_u(\theta):=J(u;\theta)$.
The DRBC--LQ objective robustifies \emph{only} the prior of $\theta$ via the KL ball
$\mathcal U_{\mathrm{KL}}(\mu,\delta)$ and solves
\begin{equation}
\label{eq:lq-drbc}
\sup_{u}\ \inf_{\nu\in\mathcal U_{\mathrm{KL}}(\mu,\delta)}\ E_{\theta\sim\nu}\big[Z_u(\theta)\big].
\end{equation}

By Theorem~\ref{thm:kl-dual}, for any fixed $u$ the inner robust value admits the KL dual form, so
\begin{equation}
\label{eq:lq-dual-obj}
\sup_{u}\ \sup_{\lambda>0}\ 
\Big\{
-\lambda\,\delta
-\lambda\log\!\Big(E_{\theta\sim\mu}\big[\exp(-Z_u(\theta)/\lambda)\big]\Big)
\Big\}.
\end{equation}
We optimize \eqref{eq:lq-dual-obj} by alternating updates in $\lambda$ (policy evaluation) and in $u$
(policy learning), as in \cref{sec:duality}.

\subsubsection{Policy class and policy learning}
\label{sec:lq-learn}
While the classical LQ problem admits a Riccati solution when $A$ is known, the Bayesian--LQ policy is not available
in closed form. We therefore learn a parametric feedback policy, but we incorporate low-dimensional
\emph{belief features} inspired by Bayesian control. Specifically, we use a linear feedback form
\begin{equation}
\label{eq:lq-policy-class}
u_t=\pi_\psi\!\big(t,X_t;\,\widehat\theta,S\big)
:= -K_\psi\!\big(t,\widehat\theta,S\big)\,X_t,
\end{equation}
where $(\widehat\theta,S)$ are belief features (e.g., a least-squares estimate and a covariance/information
proxy) computed from rollouts of the current policy.
During the inner-loop optimization of $\psi$, we freeze $(\widehat\theta,S)$ and treat them as constants (we do
not backpropagate through their computation). In our experiments, $K_\psi$ is implemented as a neural
network that takes $(\widehat\theta,\mathrm{vec}(S))$ as input.

We summarize the policy learning step (with $\lambda$ and $(\widehat\theta,S)$ treated as fixed inputs) in
Algorithm~\ref{alg:drbc-lq-learn}.

\begin{algorithm}[t]
\caption{DRBC--LQ policy learning step}
\label{alg:drbc-lq-learn}
\begin{algorithmic}[1]
\STATE \textbf{Input:} fixed $\lambda>0$; prior $\mu$; frozen belief features $(\widehat\theta,S)$; inner steps
$S_{\mathrm{in}}$; step size $\eta$; initialization $\psi$.
\FOR{$s=1,\dots,S_{\mathrm{in}}$}
  \STATE Sample $\theta_1,\dots,\theta_N\sim \mu$.
  \STATE For each $\theta_i$, simulate \eqref{eq:lq-dyn} with control
  $u_t=\pi_{\psi}(t,X_t;\widehat\theta,S)$ to estimate $Z_{\pi_\psi}(\theta_i)$.
  \STATE Compute a stochastic gradient $\widehat{\nabla}_\psi \mathcal L_{\mathrm{LQ}}(\pi_\psi,\lambda)$
  by backpropagating through the simulator \emph{while treating $(\widehat\theta,S)$ as constants}.
  \STATE Update $\psi \leftarrow \psi + \eta\, \widehat{\nabla}_\psi \mathcal L_{\mathrm{LQ}}(\pi_\psi,\lambda)$.
\ENDFOR
\STATE \textbf{Output:} updated policy parameters $\psi$.
\end{algorithmic}
\end{algorithm}

\subsection{Bayesian Merton Problem (DRBC--Merton)}
\label{sec:merton}
The purpose of this subsection is to illustrate how problem-specific structure can guide the design of a
practical parametric policy class for DRBC policy learning. We therefore focus on parameterization and learning
mechanics, and we will not rely on minimax swap arguments in this section.

Let $W=(W_1,\ldots,W_d)^\top$ be an $\RR^d$-valued Brownian motion on a complete filtered probability space
$(\Omega,\mathcal F,P)$.
Let $B:\Omega\to\RR^d$ be a latent random vector, independent of $W$ under $P$, with baseline prior
$B\sim\mu\in\mathcal P(\RR^d)$.
The risk-free asset satisfies $S_{0,0}=s_0>0$ and
\[
dS_{0,t}=rS_{0,t}\,dt,\qquad 0\le t\le T,
\]
with interest rate $r>0$.
The $d$ risky assets (stocks) $S=(S_1,\ldots,S_d)^\top$ satisfy for $i=1,\ldots,d,\ \ 0\le t\le T,$
\begin{equation}
\label{eq:merton-stock}
dS_{i,t}=S_{i,t}\Big(B_i\,dt+\sum_{j=1}^d\sigma_{ij}\,dW_{j,t}\Big),
\end{equation}
where $\sigma=(\sigma_{ij})_{1\le i,j\le d}$ is invertible and $S_{i,0}>0$.

The controller observes only stock prices, i.e., controls are adapted to the augmentation of 
$\mathcal F^S_t:=\sigma(S_s:0\le s\le t)$.
A control $\pi=\{\pi_t\}_{t\in[0,T]}$ is an $\mathcal F^S$-progressively measurable process with
$\int_0^T\|\pi_t\|_2^2dt<\infty$ $P$-a.s., where $\pi_{i,t}$ denotes the dollar amount invested in asset $i$ at
time $t$.
The wealth process $X$ with $X_0=x_0$ satisfies
\begin{equation}
\label{eq:merton-wealth}
dX_t=rX_t\,dt+\pi_t^\top\big(B-r\mathbf 1\big)\,dt+\pi_t^\top\sigma\,dW_t,
\qquad 0\le t\le T.
\end{equation}

Given a strictly increasing, strictly concave utility $u:(0,\infty)\to\RR$, the Bayesian control problem is
\begin{equation}
\label{eq:merton-bayes}
V^\mu(x_0)=\sup_{\pi\in\mathcal A(x_0)} E^P\!\big[u(X_T)\big].
\end{equation}
In our experiments we take the power utility $u(x)=\frac{1}{\alpha}x^\alpha$ with $\alpha\in(0,1)$.
We robustify only the prior on $B$ by allowing an adversary to choose $\nu$ in a KL ball
$\mathcal U_{\mathrm{KL}}(\mu,\delta)$, yielding the DRBC value with $Z_\pi(b):=E^{P^b}\!\big[u(X_T)\big]$
\[
V^{\mathrm{DRBC}}(x_0)=\sup_{\pi\in\mathcal A(x_0)}\ \inf_{\nu\in\mathcal U_{\mathrm{KL}}(\mu,\delta)}
E_{B\sim\nu}\!\big[Z_\pi(B)\big]
.
\]More details of the Bayesian Merton problem can be found in Appendix \ref{sec:finance}.
For the general $\phi$-divergence dual, policy learning for fixed
$(\lambda,\beta)$ amounts to solving
\begin{equation}
\label{eq:merton-inner}
\sup_{\pi\in\mathcal A(x_0)}\ \int_{\RR^d}\Phi_{\lambda,\beta}\!\left(E^{P^b}[u(X_T)]\right)\mu(db),
\end{equation}
where $\Phi_{\lambda,\beta}(x)=-(\lambda\phi)^*(\beta-x)$ is increasing and concave under mild conditions (see Theorem \ref{thm:phi-dual}).
This is a continuous-time smooth ambiguity objective in the spirit of \citet{Klibanoff2005}.
Closed forms are available only in special cases, so we use \eqref{eq:merton-inner} mainly as a guide for
constructing a good parameterization.
\subsubsection{Designing a parametrization via problem structure}
\label{sec:merton-param}
To keep notation simple, we present the resulting loss in the one-dimensional case ($d=1$), but the same
construction extends to the multi-asset case verbatim.

We utilize tools from stochastic calculus and results from \citet{GLS22} to design a deep learning method to solve Problem (\ref{eq:merton-inner}). This motivates an alternate definition of the set of admissible controls $\tilde{\mathcal{A}}(x_0)$ that contains the optimal policy for a large class of Bayesian control problems (see Appendix \ref{alterdef}): the collection of all alternate admissible controls $\pi$, which is a subset of $\mathcal{A}(x_0)$ such that 
there exists a function $v \sim u$ such that there exists corresponding function $h: \mathbb{R} \to \mathbb{R}$ and a functional $\rho: L^1 \to \mathbb{R}$ such that for the controlled terminal wealth $X_T^{\pi}$,
$
\int_{\mathbb{R}}E^{P^b}\left[v(X_T^{\pi})\right]\lambda(b)d\mu(b) = h(x_0)\rho(\lambda)    
$
and $
    h(1) = v(e^{rT}).
$
For a fixed $b \in \mathbb{R}$, define $\eta^{b*}_t = \exp\left(-\frac{B-b}{\sigma}W_t - \frac{(B-b)^2}{2\sigma^2}t\right)$, $\frac{dQ^b}{dP} \Big|_{\mathcal{F}^S_t} = \eta^{b*}_t$, and $W_t^{b} = W_t + \frac{B-b}{\sigma}t$ for $t \in [0,T]$, then under $Q^b$, the process $W^b$ is an $\mathcal{F}^S$-Brownian motion that is independent from $B$, hence under $Q^b$, the stock price $S$ evolves as
$dS_t = S_t\left(bdt + \sigma dW^b_t\right)$
. Moreover,
$d X_t = (X_t - \pi_t) r \, d t + \pi_t \left(b d t + \sigma d W^b_t\right).$
For a fixed $b \in \mathbb{R}$,
$E^{P^b}\left[u(X_T)\right] = E^{Q^b}\left[u(X_T)\right],$
therefore it is equivalent to study the problem
$\sup_{\pi \in \tilde{\mathcal{A}}(x_0)} \int_{\mathbb{R}}\Phi_{\lambda,\beta}\left(E^{Q^b}\left[u(X_T)\right]\right)d\mu(b),$
which is solved in \citet{GLS22}.
Let $b_0 = E^P[B]$ and if $b = b_0$, then we denote $W^{b_0}$ as $\hat{W}$. We also define for $t \in [0,T]$,
$\eta_t = \exp\left(-\nu\hat{W}_t - \frac{1}{2}\nu^2t\right),$
where $\nu = \frac{b_0-r}{\sigma}.$ For a fixed $b \in \mathbb{R}$, we define $t \in [0,T]$,
$\eta^{b}_t = \exp\left(-\nu_b\hat{W}_t - \frac{1}{2}\nu_b^2t\right),$
where $\nu_b = \frac{b_0-b}{\sigma}.$
The families of measures $\{P^b\}_{b \in \mathbb{R}}$ and $\{Q^b\}_{b \in \mathbb{R}}$ are quite different. 
For example, the distribution of $B$ under $Q^b$ is still $\mu$ (see Appendix \ref{transformderive}), where under $P^b$, $B$ is a constant $b$.
In the rest of this section, we assume that $\mu \sim \mathcal{N}\left(\mu_0, \sigma_0^2\right)$. Now, we are able to derive sufficient conditions that an optimal terminal wealth satisfies. Theorem \ref{system} motivates a loss function that also ensures numerical stability.
\begin{theorem}\label{system}
Suppose $u$ and $\Phi_{\lambda, \beta}$ are both strictly increasing and strictly concave, $X_T$ is a terminal wealth such that there exists a constant $\kappa \in \mathbb{R}$ with $E^{Q^{r}} \left[ X_T  \right] = x_0 e^{r T}$ and $L_{\kappa}(X_T) = 0$
with
\begin{align*}
L_{\kappa}(X_T) &=
\kappa - \log\left(u'(X_T)\right)  - \frac{K_2^2}{4K_1} +\log\left(\frac{\sigma_0}{\sigma_Q}\right)\notag\\
&+ \frac{r-b_0}{\sigma}\left(W_T + \frac{B-b_0}{\sigma}T\right) - \frac{(b_0-r)^2}{2\sigma^2}T\notag \\
& +K_3+\log\left(\int_{\mathbb{R}}\Phi_{\lambda, \beta}' \left( E^{Q^b} \left[ u(X_T) \right] \right) d\mu(b)\right) \notag\\
&-\log\left(\int_{\mathbb{R}}\Phi_{\lambda, \beta}'\left(E^{Q^a}[u(X_T)]\right)d\mu_A(a)\right)\notag 
\end{align*}
$D = W_T + \frac{B-b_0}{\sigma}T$, $K_1 = \dfrac{T}{2 \sigma^2} + \dfrac{1}{2 \sigma_0^2}$, $K_2 = \dfrac{T b_0}{\sigma^2} + \dfrac{D}{\sigma} + \dfrac{\mu_0}{\sigma_0^2}$, $K_3 = \dfrac{T b_0^2}{2 \sigma^2} + \dfrac{b_0 D}{\sigma} + \dfrac{\mu_0^2}{2 \sigma_0^2}$, $\mu_A \sim \mathcal{N}\left( \mu_Q, \sigma_Q^2 \right )$, $\sigma_Q^2 = \left( \dfrac{T}{\sigma^2} + \dfrac{1}{\sigma_0^2} \right )^{-1}$, and $\mu_Q = \sigma_Q^2 \left( \dfrac{W_T}{\sigma} + \dfrac{B T}{\sigma^2} + \dfrac{\mu_0}{\sigma_0^2} \right )$,
    then $X_T$ is the optimal terminal wealth for Problem (\ref{eq:merton-inner}).
\end{theorem}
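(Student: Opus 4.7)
The plan is to prove Theorem~\ref{system} as a sufficient optimality condition via the martingale method adapted to the smooth ambiguity objective following \cite{GLS22}. In the complete Black--Scholes market, the set of attainable terminal wealths for $\pi \in \tilde{\mathcal A}(x_0)$ is characterized by the single budget constraint $E^{Q^r}[X_T] = x_0 e^{rT}$, so Problem~(\ref{inner}) becomes a static concave program over $X_T \in L^1(\mathcal F^S_T)$: maximize $J(X_T) := \int_{\mathbb R} \Phi_{\lambda,\beta}(E^{Q^b}[u(X_T)]) d\mu(b)$ subject to that linear constraint. By Assumption~\ref{ass0.5}, $\Phi_{\lambda,\beta}$ is strictly concave and strictly increasing and $u$ is strictly concave, so $J$ is concave in $X_T$; hence the Karush--Kuhn--Tucker first-order conditions with a Lagrange multiplier $\kappa$ are sufficient for optimality. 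The strategy is therefore (i) write down the FOC, (ii) recognize its logarithm as the stated identity $L_\kappa(X_T)=0$, and (iii) invoke concave duality to conclude.

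For step~(i), I would take a Gateaux variation of $J(X_T) - \kappa(E^{Q^r}[X_T] - x_0 e^{rT})$ in an arbitrary direction $\eta \in L^\infty(\mathcal F^S_T)$, change every expectation back to the common reference measure $Q^{b_0}$, and apply Fubini to obtain the pointwise FOC
\[
u'(X_T) \int_{\mathbb R}\Phi'_{\lambda,\beta}\bigl(E^{Q^b}[u(X_T)]\bigr) \frac{dQ^b}{dQ^{b_0}}\bigg|_{\mathcal F^S_T} d\mu(b) = \kappa \frac{dQ^r}{dQ^{b_0}}\bigg|_{\mathcal F^S_T} \quad \text{a.s.}
\]
The Girsanov densities here are explicit Gaussian exponentials in $\hat W_T$ and $B$, read off from the definitions of $\eta^{b}$ and $\nu_b$ in the paragraph preceding the theorem.

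For step~(ii), I would take logarithms of both sides. The right-hand side contributes $\kappa$ together with the drift-translation terms $\tfrac{r-b_0}{\sigma}(W_T + \tfrac{B-b_0}{\sigma}T) - \tfrac{(b_0-r)^2}{2\sigma^2}T$, which is exactly $\kappa + \tfrac{r-b_0}{\sigma}D - \tfrac{(b_0-r)^2}{2\sigma^2}T$. The left-hand side is rewritten by recognizing $\mu_A$ defined in the theorem as the normalized $\mathcal F^S_T$-posterior of $B$ under $Q^{b_0}$: since $\mu = \mathcal N(\mu_0,\sigma_0^2)$ and the conditional likelihood of $\hat W_T$ given $B=b$ is Gaussian, Bayes' rule combined with completion of the square in $b$ yields the Gaussian posterior $\mu_A = \mathcal N(\mu_Q, \sigma_Q^2)$ with the stated parameters, and the normalization from the square completion produces exactly the $K_1,K_2,K_3$ contributions and the $\log(\sigma_0/\sigma_Q)$ factor. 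The resulting Radon--Nikodym ratio between $\mu$ and $\mu_A$ contributes the term $\log \int \Phi'_{\lambda,\beta}(\cdot)\,d\mu - \log \int \Phi'_{\lambda,\beta}(\cdot)\,d\mu_A$, while $\log u'(X_T)$ isolates the primal variable. Collecting everything yields $L_\kappa(X_T) = 0$.

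Step~(iii) is then immediate: any $X_T$ satisfying the budget constraint together with $L_\kappa(X_T)=0$ for some $\kappa$ fulfils the KKT conditions of a concave program with a linear equality constraint, and is therefore optimal; the attainability of the associated strategy in $\tilde{\mathcal A}(x_0)$ follows by martingale representation as in \cite{KZ98}. The main difficulty I anticipate is the Gaussian bookkeeping in step~(ii): isolating the posterior $\mu_A$ through the square completion without sign or normalization slips, justifying Fubini between the outer $\mu$-integral and the $Q^{b_0}$-expectations (which uses the linear growth of $\Phi_{\lambda,\beta}$ and local H\"older regularity of $\Phi'_{\lambda,\beta}$ from Assumption~\ref{ass0.5}), and verifying the integrability of the variational test functions so that the pointwise FOC is legitimate.
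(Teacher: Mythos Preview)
Your proposal is correct and follows essentially the same route as the paper: the paper invokes Corollary~3.8 of \cite{GLS22} for the first-order condition (your step~(i)), takes logarithms, and then simplifies the term $\log\int \Phi'_{\lambda,\beta}(E^{Q^b}[u(X_T)])\,\eta_T^b\,d\mu(b)$ via the abstract Bayes formula applied to an auxiliary variable $A\sim\mu$ independent of $(B,W)$, yielding exactly the Gaussian posterior $\mu_A=\mathcal N(\mu_Q,\sigma_Q^2)$ and the normalization constant $\exp(K_2^2/4K_1-K_3)\,\sigma_Q/\sigma_0$ that you identify through completion of the square. The only cosmetic difference is that the paper frames the Gaussian computation as the conditional law of $A$ under a tilted measure given $\sigma(B,W_T)$ rather than as a posterior of $B$; the algebra is identical.
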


From Theorem \ref{system}, we guess that the optimal terminal wealth has the form $X^*_T = h(W_T,B)$, where $h$ is a function that we plan to use neural network $h_{\theta}$ to approximate ($\theta \in \mathbb{R}^d$). $\kappa \in \mathbb{R}$ is a learnable scalar. We replace $X_T$ in Theorem \ref{system} by $h_{\theta}(W_T,B)$, denote all the learnable parameters as $\boldsymbol{\theta} = (\theta, \kappa)^T \in \mathbb{R}^{d+1}$, and design the loss function as for a choice of $b_1 \in \mathbb{R}$,
\begin{align}\label{loss}
\mathcal{L}(\boldsymbol{\theta}) &= E^{Q^{b_1}} \bigg[\bigg\|L_{\kappa}(h_{\theta}(W_T,B)))\bigg\|_2^2\bigg]\notag\\
&+ \Bigg( E^{Q^{r}}[h_{\theta}(W_T,B)] - x_0 e^{rT} \Bigg)^2.   
\end{align}
If $X^*_T = h_{\theta}(W_T,B)$, then $\mathcal{L}(\boldsymbol{\theta}) = 0$. The Algorithm \ref{BPG} can be used to minimize $\mathcal{L}(\boldsymbol{\theta})$ to find the optimal numerical solution $\boldsymbol{\theta}^*$. Discussion of DRBC algorithm in general case is in Appendix \ref{algo4}. Usage of $h_{\theta}$ enables scaling with dimension $n$, for simplicity we don't show it here.

\section{Numerical Experiments}
\label{sec:experiments}

We evaluate DRBC along two axes. In Section~\ref{sec:exp-lq}, we run a synthetic misspecified LQ experiment and
solve DRBC via alternating optimization, comparing against plug-in baselines. In
Section~\ref{sec:exp-merton}, we study the Bayesian Merton problem: Section~\ref{ablation} varies the radius
$\delta$ and compares DRBC to DRC, Section~\ref{smps} benchmarks the neural network method against a closed-form
solution and confirms the $\mathcal{O}_p(n^{-1/2})$ policy-evaluation scaling, and Section~\ref{realdata} reports
out-of-sample Sharpe ratios on real stock data. Implementation details for all experiments are in Appendix~\ref{ED}, with additional
simulated and high-dimensional results in Appendix~\ref{add}.

\subsection{Linear Quadratic Control Example}
\label{sec:exp-lq}
We consider a continuous-time LQ problem with an unknown drift parameter $\theta^\star$.
We generate controlled trajectories under $\theta^\star$ and use the resulting state--control data to form a
plug-in estimate $\widehat\theta$.
To test robustness to prior misspecification, the DRBC is computed using a \emph{misspecified} prior $\mu$ (distinct from the ground-truth distribution of $\theta^\star$).
We also compute an \emph{oracle} benchmark that knows $\theta^\star$ and solves the corresponding LQ problem exactly.
We compare plug-in and DRBC, and quantify their performance by the gap to the oracle evaluated under the ground-truth parameter $\theta^\star$, as reported in Table~\ref{tab:lq_main}.
For completeness, the corresponding achieved utility values are reported in Table~\ref{tab:lq_utility_appendix} in appendix.

\begin{table}[t]
\centering
\caption{LQ control performance gap (mean $\pm$ std.\ over 100 runs).}
\label{tab:lq_main}
\resizebox{\columnwidth}{!}{%
\begin{tabular}{lccc}
\toprule
Method & $\delta=0.01$ & $\delta=0.05$ & $\delta=0.10$ \\
\midrule
Plug-in gap to oracle
& \multicolumn{3}{c}{$3.6 \pm 13.0$} \\
DRBC gap to oracle
& \textbf{0.8 $\pm$ 1.3} & \textbf{1.2 $\pm$ 1.3} & \textbf{1.3 $\pm$ 1.3} \\
\bottomrule
\end{tabular}%
}
\vspace{-0.05in}
\end{table}

\subsection{Bayesian Merton Example}\label{sec:exp-merton}
\subsubsection{Ablation Study with the Radius}\label{ablation}
We consider a controlled synthetic environment in which the ground-truth drift is time-varying and known, so that
the true optimal policy can be computed. Concretely, the state evolves as $dX_t = X_t\Big(r\,dt+\pi_t(B_t-r)\,dt+\sigma\,dW_t\Big)$ with $B_t=\frac{B_0}{2}\bigl(1+\cos(\kappa t)\bigr)$.
We solve for the optimal policy $\pi^\star$ under this true model and then compute the resulting evaluated policies under DRC and
DRBC for different uncertainty radii $\delta$. The utility gap $\mathrm{Gap}
=
\mathrm{Utility}(\text{true model},\pi^\star)
-
\mathrm{Utility}(\text{true model},\widehat{\pi}_{\delta})$ is the performance measure,
where smaller values indicate a policy closer to the true optimum. As shown by Figure \ref{fig:gap_vs_delta}, DRBC outperforms DRC across the range of radii we consider.

\begin{figure}[t]
  \centering
  \includegraphics[width=\linewidth]{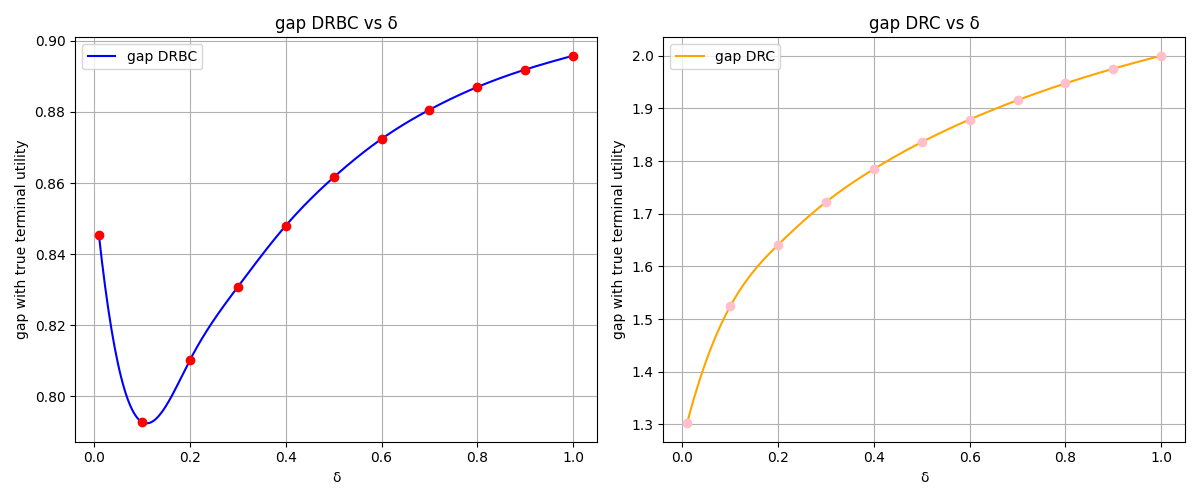}
  \caption{Utility gap versus uncertainty radius $\delta$.}
  \label{fig:gap_vs_delta}
\end{figure}

\subsubsection{Compare with Closed-form Solutions and Rate of Convergence}\label{smps}
If we replace $\Phi_{\lambda, \beta}$ with a power function $\Phi$, Problem (\ref{eq:merton-inner}) admits a closed-form solution \citep{GLS22}. This allows us to explicitly evaluate the performance of Algorithm \ref{BPG} in this specific scenario. In Table \ref{result6.1}, we compare the closed-form optimal value $E^{Q^{b_1}}\left[u(X_T^*)\right]$ with the learned optimal value $E^{Q^{b_1}}\left[u\left(h_{\theta^*}(W_T,B)\right)\right]$ across various market parameters $r$ and different values of $b_1$. To estimate the learned optimal value \( E^{Q^{b_1}}\left[ u\left( h_{\theta^*}(W_T, B) \right) \right] \), we employ Monte Carlo approach by conducting 100 independent experiments, each utilizing 2000 samples of the pair \( (W_T, B) \).

\begin{table}[t]
\caption{Comparisons of learning results and closed-form solutions. Here $b_1=0.1$, $b_2=0.3$; $r_1=0.05$, $r_2=0.1$. We run evaluations 100 times.}
\label{result6.1}
\vskip -0.15in
\begin{center}
\begin{small}
\begin{sc}
\begin{tabular}{
    >{\centering\arraybackslash}p{2.3cm}
    >{\centering\arraybackslash}p{2.3cm}
    >{\centering\arraybackslash}p{2.3cm}
}
\toprule
\text{Comparing term} & \text{Learning result}  & \text{Closed-form}\\
\midrule
$E^{Q^{b_1}}\left[u(X_T^*)\right], r_1$ & 3.174$\pm$ 0.013  & 3.226\\
$E^{Q^{b_2}}\left[u(X_T^*)\right], r_1$ & 3.460$\pm$ 0.030  & 3.380\\
$E^{Q^{b_1}}\left[u(X_T^*)\right], r_2$ & 3.179$\pm$ 0.014  & 3.267\\
\bottomrule
\end{tabular}
\end{sc}
\end{small}
\end{center}
\vskip -0.05in
\end{table}

For the CLT rate experiment, for each fixed $\delta = 0.01,0.05, 0.10$, we sample and compare three different numbers of independent and identically distributed copies of $B$: $n = 10^2$, $10^3$, and $10^4$. Table \ref{result6.3} presents the means and standard deviations of the estimator  with a fixed policy $\pi$, computed from 100 independent experiments. The numerical results demonstrate that the estimator $\hat{Q}_{\text{DRBCKL}}(\pi)$ converges, and both the scaling rates of the standard deviation and the difference $\left|\hat{Q}_{\text{DRBCKL}}(\pi) - Q_{\text{DRBCKL}}(\pi)\right|$ are consistent with the $\mathcal{O}_p\left(n^{-1/2}\right)$ rate predicted by theory.

\begin{table}[t]
\centering
\caption{Policy evaluation vs.\ sample size $n$ of $B$.}
\label{result6.3}
\begin{tabular}{lccc}
\toprule
$\delta$ & $10^{2}$ & $10^{3}$ & $10^{4}$ \\
\midrule
0.01 & $3.764\!\pm\!0.040$ & $3.766\!\pm\!0.012$ & $3.765\!\pm\!0.004$ \\
0.05 & $3.695\!\pm\!0.041$ & $3.697\!\pm\!0.013$ & $3.696\!\pm\!0.004$ \\
0.10 & $3.643\!\pm\!0.042$ & $3.645\!\pm\!0.014$ & $3.645\!\pm\!0.004$ \\
\bottomrule
\end{tabular}
\end{table}

\subsubsection{Real Data Experiments} \label{realdata}

This experiment is motivated by \citet{BlanchetChenZhou2021}. We use $S\&P$ 500 constituents data from 2015
to 2024 and evaluate different methods using average annualized Sharpe Ratio for all stocks. We use a rolling window of one year to get the required parameters for all methods like interest rate $r$ and an estimation of $\sigma$. For the ease of computation, we choose two fixed priors across time based on \citet{wang2020continuous}, prior 1 is more deviated and prior 2 is less deviated. The uncertainty set radius $\delta$ is chosen following the cross-validation type in \citet{NianSi}. Results in Table \ref{tab:mean_sharpe_ratios} show that the DRBC is substantially better than benchmarks and it reduces the overpessimism in real data. 

\begin{table}[t]
  \caption{Comparison of mean Sharpe Ratios across methods for part of $S\&P$ 500 data.}
  \label{tab:mean_sharpe_ratios}
  \vskip 0.1in
  \begin{center}
    \begin{small}
      \begin{sc}
        \begin{tabular}{c c}
          \toprule
          Method & Mean of Sharpe Ratio ($\uparrow$)\\
          \midrule
          Merton                                  & $0.015 \pm 0.301$ \\
          Bayesian (prior 1)    & $0.493 \pm 0.281$ \\
          Bayesian (prior 2)    & $0.655 \pm 0.282$ \\
          DRC (prior 1)                           & $-0.220 \pm 0.292$ \\
          DRC (prior 2)                           & $-0.237 \pm 0.293$ \\
          DRBC (prior 1)                          & $\mathbf{0.818 \pm 0.308}$ \\
          DRBC (prior 2)                          & $\mathbf{1.147 \pm 0.311}$ \\
          \bottomrule
        \end{tabular}
      \end{sc}
    \end{small}
  \end{center}
  \vskip -0.15in
\end{table}

\section{Conclusion and Future Work}

We proposed the formulation and an efficient numerical algorithm for distributionally robust Bayesian control (DRBC) for diffusion control under prior misspecification,
where robustness is imposed only on the prior of the latent parameter, which is also hard to estimate. 
Two future directions are especially promising. First, extending DRBC to optimal-transport
uncertainty sets around the prior calls for new scalable numerical methods. Second, deeper structural results for
Bayesian diffusion control could guide improved parameterizations
and accelerate DRBC policy learning in partially observed and high-dimensional settings.

\section*{Impact Statement}
This paper presents work whose goal is to advance the field of Machine
Learning. There are many potential societal consequences of our work, none of which we feel must be specifically highlighted here.
\bibliography{example_paper}
\bibliographystyle{icml2026}

%%%%%%%%%%%%%%%%%%%%%%%%%%%%%%%%%%%%%%%%%%%%%%%%%%%%%%%%%%%%%%%%%%%%%%%%%%%%%%%
% APPENDIX
%%%%%%%%%%%%%%%%%%%%%%%%%%%%%%%%%%%%%%%%%%%%%%%%%%%%%%%%%%%%%%%%%%%%%%%%%%%%%%%
\newpage
\appendix
\onecolumn

\section{Preliminary Definitions}\label{prelim}
\begin{definition}
    Given a convex function $\phi:[0,\infty)\to\mathbb{R}$ with $\phi(1)=0$,
$\phi$-divergence of $Q$ from $P$ is
\[
D_{\phi}(P\|Q)=\int_{\Omega}\phi\!\left(\frac{dP}{dQ}\right)\,dQ,
\]
where $\frac{dP}{dQ}$ is the Radon--Nikodym derivative of $P$ with respect to $Q$.
If $\phi(x)=x\log(x)-x+1$, then $D_{\phi}$ is the Kullback--Leibler (KL) divergence.
\end{definition}

\begin{definition}\label{C-R-def}
The \textit{Cressie-Read divergence} is a $\phi$- divergence where the convex function is taken by for a fixed $k > 1$,
$$f_k(t) = \frac{t^k - kt + k-1}{k(k-1)}.$$
\end{definition}

\begin{definition}\label{lineargrowth}
    Suppose $g: \mathbb{R}^d \to \mathbb{R}$ is a function, then we say $g$ satisfies the \textit{linear growth condition} if there exists a constant $c > 0$ such that $$\left|g(x)\right| \leq c\left(1 + \left\Vert x\right\Vert\right),$$ where $\left\Vert .\right\Vert$ denotes a norm in the Euclidean space.
\end{definition}

\begin{definition}
    Suppose $f: D \subset \mathbb{R}^d \to \mathbb{R}$ is a function, then we say $f$ is \textit{H\"{o}lder continuous} with parameter $w$ and bounding constant $K$ if there exists $K > 0$ and $w > 0$ such that for any $x, y \in D$, 
    \begin{equation}\label{Holdercont}
       \left|f(x) - f(y)\right| \leq K\left\Vert x-y\right\Vert^w. 
    \end{equation}
    We say $f$ is \textit{locally H\"{o}lder continuous} with parameter $w$ and bounding constant $K$ if Equation (\ref{Holdercont}) holds inside each compact neighborhood.
\end{definition}

\section{Proof of Theorem \ref{uncertaintyset}}\label{sec:2.1}
In this section, we give a proof of Theorem \ref{uncertaintyset} when the uncertainty ball is characterized by a general $\phi$-divergence. Then the statement in the main text will be obtained as a corollary. This also justifies that our formulation can be generalized to any $\phi$-divergence uncertainty set. We use $\mathcal{U}_{\delta}$ to denote the general uncertainty set.
\begin{proof}
    \begin{itemize}
    \item (1) It is easy to see by checking the definitions, Equation (\ref{relation}) defines an equivalence relation, then the set of all equivalence classes under the quotient topology defines $\mathcal{U}_{\delta}$.
        \item (2) Fix $Q \in \mathcal{U}_{\delta}$, then there are 4 cases to check.
        \begin{itemize}
            \item (a) $\frac{dQ}{dP} = f(B)$, where $f: \mathbb{R}^d \to \mathbb{R}$ is bounded and (Borel) measurable. Then from definition of the uncertainty set, for any $A \in \mathcal{B}\left(\mathbb{R} ^d\right)$, there exists $\nu \ll \mu$ such that
            \begin{align*}
                Q(B \in A) &= E^{Q}\left[\mathbf{1}_{\{B \in A\}}\right] = E^P\left[\frac{dQ}{dP}\mathbf{1}_{\{B \in A\}}\right] = E^P\left[f(B)\mathbf{1}_{\{B \in A\}}\right] \\ &= \nu(A) = \int_A \frac{d\nu}{d\mu}d\mu =  E^P\left[\frac{d\nu}{d\mu}(B)\mathbf{1}_{\{B \in A\}}\right],
            \end{align*}
            which implies that $\frac{dQ}{dP} = \frac{d\nu}{d\mu}(B)$ $P$-almost surely from the uniqueness of Radon-Nikodym derivative.
            \item (b) $\frac{dQ}{dP} = f(W)$, where $f: \Omega \to \mathbb{R}$ is bounded and measurable. In this case, it is convenient to assume $\Omega = C\left([0,T];\mathbb{R}^d\right)$. Thus the standard Brownion motion $W: \Omega \to \Omega$ can be viewed as a function-valued random element. Therefore, for $A \in \mathcal{B}(\Omega)$, 
            \begin{align*}
               Q(W \in A) &= E^{Q}\left[\mathbf{1}_{\{W \in A\}}\right] = E^P\left[\frac{dQ}{dP}\mathbf{1}_{\{W \in A\}}\right] = E^P\left[f(W)\mathbf{1}_{\{W \in A\}}\right] \neq P(W \in A)
            \end{align*}unless $f = 1$, which is equivalent to the case when $\frac{dQ}{dP} = \frac{d\nu}{d\mu}(B)$ with $\nu = \mu$.
            \item (c) $\frac{dQ}{dP} = f(B, W)$, where $f: \mathbb{R}^d \times \Omega \to \mathbb{R}$ is bounded and jointly measurable. From part (b) and independence between $B$ and $W$, it is easy to construct a contradiction. 
            \item (d) $\frac{dQ}{dP} = f(Y)$, where $f: \Omega \to \mathbb{R}$ is bounded and  measurable, and $Y$ is a stochastic process that is independent of both $B$ and $W$. From the definition of the equivalence classes, this case is equivalent to the case when $\frac{dQ}{dP} = \frac{d\nu}{d\mu}(B)$ with $\nu = \mu$.
        \end{itemize}
        Moreover, \begin{align*}
           D_{\phi}(Q \parallel P) = \int \phi\left(\frac{dQ}{dP}\right) \, dP &= \int \phi\left(\frac{d\nu}{d\mu}(B)\right) \, dP= \int \phi\left(\frac{d\nu}{d\mu}\right) \, d\mu = D_{\phi}(\nu \parallel \mu).
        \end{align*}
        % \item (3) 
    \end{itemize}
\end{proof}

\section{Proof of Theorem \ref{thm:kl-dual}}

In the main text we focus on the KL uncertainty set. In this appendix we first derive the dual
representation for a general $\phi$--divergence ball around the baseline prior $\mu$, and then obtain the KL and
Cressie--Read cases as corollaries.

For $\delta\ge 0$, define the ambiguity set
\[
\mathcal U_{\phi}(\mu,\delta):=\{\nu\in\mathcal P(\RR^m): D_\phi(\nu\|\mu)\le \delta\}.
\]
As shown in Section \ref{sec:2.1}, this uncertainty set can be viewed as generated by a quotient topology. Therefore, in the following proof, we will use the equivalence relation formulation to conduct the proof. 
Fix a policy $\pi\in\mathcal A(x_0)$ and recall that $Z_\pi(b)$ is defined in \eqref{eq:Zb}.
The robust prior evaluation for fixed $\pi$ is
\[
\mathcal R_{\phi,\delta}(\pi):=\inf_{\nu\in\mathcal U_\phi(\mu,\delta)}\int_{\RR^m} Z_\pi(b)\,\nu(db).
\]

\begin{theorem}
\label{thm:phi-dual}
Fix $\pi\in\mathcal A(x_0)$ and assume $Z_\pi(B)\in L^1(\mu)$ and $\mathcal U_\phi(\mu,\delta)\neq\emptyset$.
Let $\phi^*$ be the convex conjugate of $\phi$ and define
\[
\Phi_{\lambda,\beta}(z):= -(\lambda \phi)^*(\beta-z),\qquad \lambda\ge 0,\ \beta\in\RR,
\]
with the convention $(\lambda\phi)^*(\cdot)=\lambda\,\phi^*(\cdot/\lambda)$ for $\lambda>0$.
Then
\begin{equation}
\label{eq:phi-dual}
\mathcal R_{\phi,\delta}(\pi)
=
\sup_{\lambda \ge 0,\ \beta\in\RR}
\left\{
\beta-\lambda\,\delta+\int_{\RR^m}\Phi_{\lambda,\beta}\!\left(Z_\pi(b)\right)\mu(db)
\right\}.
\end{equation}
\end{theorem}

\begin{proof}
The proof uses the law invariance theory developed in \citet{Shapiro17} for $\phi$--divergences in our
$\sigma(B)$--measurable uncertainty set.

Recall that we begin with the complete probability space $(\Omega, \mathcal{F}, P)$.
Let $\hat{\mathcal{F}} = \sigma(B) \subset \mathcal{F}$, and define $\hat{P} = P|_{\hat{\mathcal{F}}}$.
Then the triple $(\Omega,\hat{\mathcal{F}},\hat{P})$ is a probability space (may not be complete) such that for
any $\hat{\mathcal{F}}$--measurable random variable $Z$,
\begin{equation}\label{equal}
    E^P\!\left[Z\right]= E^{\hat{P}}\!\left[Z\right].
\end{equation}

For a fixed $Q$ in the $\phi$--divergence uncertainty set on $(\Omega,\mathcal F)$ that only reweights $B$
(i.e., $dQ/dP$ is $\hat{\mathcal F}$--measurable; equivalently $dQ/dP=f(B)$), define the restriction
$\hat{Q} = Q|_{\hat{\mathcal{F}}}$.
We define the space $L^1\!\left(\Omega, \hat{\mathcal{F}}, \hat{P}\right)\subset L^1\!\left(\Omega, \mathcal{F}, P\right)$.
Following \citet{Shapiro17}, we define an equivalence relation $\sim_{\phi}$ between two functions with mean 1
$X,Y \in L^1\!\left(\Omega, \hat{\mathcal{F}}, \hat{P}\right)$ by
\[
X \sim_{\phi} Y
\quad \Longleftrightarrow \quad
\int_{\Omega}\phi\!\left(X\right)d\hat{P} = \int_{\Omega}\phi\!\left(Y\right)d\hat{P}.
\]
We then define a quotient space of $L^1\!\left(\Omega, \hat{\mathcal{F}}, \hat{P}\right)$ with respect to
$\sim_{\phi}$ by
\[
\hat{\mathcal{A}}
=
\left\{\left[\hat{X}\right]:\ \hat{X}=\frac{d\hat{Q}}{d\hat{P}},\ \hat{Q}=Q|_{\hat{\mathcal F}}
\text{ for some admissible }Q\right\}.
\]

Let $\hat{\mathcal{U}}_{\phi,\delta}$ be the collection of restrictions of all admissible $Q$ on
$\hat{\mathcal{F}}$.
For any admissible $Q$, there is a unique $\hat{Q}\in\hat{\mathcal{U}}_{\phi,\delta}$ such that
$\hat{Q}=Q|_{\hat{\mathcal{F}}}$ from the definition.
Conversely, for a fixed $\hat{Q}\in\hat{\mathcal{U}}_{\phi,\delta}$, there is also a unique admissible $Q$ such
that $\hat{Q}=Q|_{\hat{\mathcal{F}}}$; this follows from the fact that two admissible measures that coincide on
$\sigma(B)$ are identified (equivalently, they induce the same $B$--marginal).

Now fix $\pi$ and consider the $\hat{\mathcal F}$--measurable random variable $Z:=Z_\pi(B)$.
Because our uncertainty set only reweights $B$, for any admissible $Q$ we have
\[
E^Q[Z_\pi(B)] = E^{\hat Q}[Z_\pi(B)],
\]
and therefore
\[
\mathcal R_{\phi,\delta}(\pi)
=
\inf_{\hat{Q} \in \hat{\mathcal{U}}_{\phi,\delta}}E^{\hat{Q}}\!\left[Z_\pi(B)\right].
\]

Next, define the one-dimensional linear subspace
\[
D=\{\alpha Z_\pi(B):\ \alpha\in\RR\}\subset L^1(\Omega,\hat{\mathcal F},\hat P).
\]
From the law invariance theory in \citet{Shapiro17}, the functional
$\rho: D\to\RR$ given by
\[
\rho(Z)=\inf_{\hat{Q} \in \hat{\mathcal{U}}_{\phi,\delta}}E^{\hat{Q}}[Z]
\]
is law invariant with respect to $(\Omega,\hat{\mathcal{F}},\hat{P})$, and strong duality holds.
As in Section 3.2 of \citet{Shapiro17}, the Lagrangian for
$\inf_{\hat{Q} \in \hat{\mathcal{U}}_{\phi,\delta}} E^{\hat{Q}}[Z]$ can be written in terms of
$\hat{X}=d\hat Q/d\hat P$ as
\begin{align*}
\mathcal{L}_Z\!\left(\hat{X},\lambda,\beta\right)
&= \int_{\Omega}Z\hat{X}\,d\hat{P}
+ \lambda\left(\int_{\Omega}\phi\!\left(\hat{X}\right)d\hat{P}-\delta\right)
+ \beta\left(1-\int_{\Omega}\hat{X}\,d\hat{P}\right)\\
&=\beta - \lambda \delta + \int_{\Omega}\left(Z\hat{X} + \lambda \phi\!\left(\hat{X}\right)-\beta\hat{X}\right)d\hat{P}.
\end{align*}
The Lagrangian dual problem is
\[
\sup_{\lambda \geq 0,\ \beta \in \mathbb{R}}\inf_{\hat{X} \geq 0}\mathcal{L}_Z\!\left(\hat{X},\lambda,\beta\right).
\]
Since $L^1(\Omega,\hat{\mathcal F},\hat P)$ is decomposable, again as in \citet{Shapiro17},
\begin{align*}
\inf_{\hat{X} \geq 0}\mathcal{L}_Z\!\left(\hat{X},\lambda,\beta\right)
&=\beta - \lambda \delta
+ \int_{\Omega}\inf_{x \geq 0}\left(Zx + \lambda \phi(x) - \beta x\right)d\hat{P}\\
&=\beta - \lambda \delta
+ \int_{\Omega}-\left(\lambda \phi\right)^*(\beta - Z)\,d\hat{P}\\
&=\beta - \lambda \delta
+ E^{\hat{P}}\!\left[\Phi_{\lambda,\beta}\!\left(Z_\pi(B)\right)\right]\\
&=\beta - \lambda \delta
+ \int_{\RR^m}\Phi_{\lambda,\beta}\!\left(Z_\pi(b)\right)\mu(db),
\end{align*}
where the last equality uses \eqref{equal} and the fact that $Z_\pi(B)$ is $\sigma(B)$--measurable with law
induced by $\mu$.
Taking the supremum over $(\lambda,\beta)$ yields \eqref{eq:phi-dual}.
\end{proof}

\begin{remark}
A common (but incorrect) shortcut would be to treat the uncertainty set as if it were law invariant with respect
to the full filtration $\mathcal F$ and to apply $\phi$--divergence duality directly to a generic
$\mathcal F$--measurable payoff. This would lead to an expression of the form
\[
\inf_{Q} E^Q[H]
\stackrel{\text{(wrong)}}{=}
\sup_{\lambda\ge 0,\ \beta\in\RR}\left\{\beta-\lambda\delta+E^P\!\left[-(\lambda\phi)^*(\beta-H)\right]\right\},
\]
which is not valid here because admissible Radon--Nikodym derivatives are constrained to be
$\sigma(B)$--measurable (i.e., of the form $f(B)$), hence the uncertainty set is \emph{not} law invariant with
respect to $\mathcal F$.
In the language of \citet{Shapiro17}, suppose $X=f(B)$ with $B\sim\mathcal N(0,1)$, and define $Y=f(W_1)$.
Then $X$ and $Y$ are distributionally equivalent under $P$, but $Y$ cannot induce an admissible change of
measure (since it is not $\sigma(B)$--measurable), even though $X$ can.
\end{remark}

%%%%%%%%%%%%%%%%%%%%%%%%%%%%%%%%%%%%%%%%%%%%%%%%%%%%%%%%%%%%%%%%%%%%%%%%%%%%
% Corollary: KL (main text)
%%%%%%%%%%%%%%%%%%%%%%%%%%%%%%%%%%%%%%%%%%%%%%%%%%%%%%%%%%%%%%%%%%%%%%%%%%%%

\subsection{KL Duality as A Corollary of Theorem~\ref{thm:phi-dual}}
\label{app:kl-cor}

\begin{proof}[Proof of Theorem~\ref{thm:kl-dual}]
Apply Theorem~\ref{thm:phi-dual} with $\phi(x)=x\log x-x+1$.
Then $\phi^*(y)=e^y-1$, hence for $\lambda>0$,
\[
\Phi_{\lambda,\beta}(z)=-(\lambda\phi)^*(\beta-z)
= -\lambda\phi^*\!\left(\frac{\beta-z}{\lambda}\right)
= \lambda\left(1-\exp\!\left(\frac{\beta-z}{\lambda}\right)\right).
\]
Therefore,
\begin{align*}
\mathcal R_{\delta}(\pi)
&=\sup_{\lambda \geq 0,\ \beta \in \mathbb{R}}
\left\{\beta - \lambda \delta
+ \int_{\RR^m}\lambda\left(1-\exp\!\left(\frac{\beta-Z_\pi(b)}{\lambda}\right)\right)\mu(db)\right\}.
\end{align*}
If $\lambda>0$, differentiating w.r.t.\ $\beta$ yields the optimizer
\[
\beta^*
=
-\lambda\log\!\left(\int_{\RR^m}\exp\!\left(\frac{-Z_\pi(b)}{\lambda}\right)\mu(db)\right).
\]
Substituting $\beta^*$ gives
\[
\mathcal R_{\delta}(\pi)
=
\sup_{\lambda>0}
\left\{
-\lambda \delta
-\lambda\log\!\left(\int_{\RR^m}\exp\!\left(\frac{-Z_\pi(b)}{\lambda}\right)\mu(db)\right)
\right\},
\]
and the extension at $\lambda=0$ follows from the same argument as the discussion of case 1 after Assumption~1
in \citet{HuHong2013}.
\end{proof}

%%%%%%%%%%%%%%%%%%%%%%%%%%%%%%%%%%%%%%%%%%%%%%%%%%%%%%%%%%%%%%%%%%%%%%%%%%%%
% Corollary: Cressie--Read
%%%%%%%%%%%%%%%%%%%%%%%%%%%%%%%%%%%%%%%%%%%%%%%%%%%%%%%%%%%%%%%%%%%%%%%%%%%%

\subsection{Cressie--Read Divergence as A Corollary of Theorem~\ref{thm:phi-dual}}
\label{app:cr}

We give the dual form for the Cressie--Read divergence.
Let $\phi_k(x) = \frac{x^k-kx+k-1}{k(k-1)}$ with $k\in(1,\infty)$, denote the uncertainty set by
$\mathcal{U}_{k,\delta}:=\mathcal U_{\phi_k}(\mu,\delta)$, and define $k_*=\frac{k}{k-1}$ and
$c_k(\delta) = \left(1+k(k-1)\delta\right)^{\frac{1}{k}}$.

\begin{theorem}\label{C-Rdivergence}
\begin{align*}
\mathcal R_{k,\delta}(\pi)
:=
\inf_{\nu \in \mathcal{U}_{k,\delta}} \int_{\RR^m} Z_\pi(b)\,\nu(db)
=
\sup_{\beta \in \mathbb{R}}\left\{
\beta - c_k(\delta)\left(\int_{\RR^m}\left(\beta - Z_\pi(b)\right)_+^{{k_*}}\mu(db)\right)^{\frac{1}{k_*}}
\right\}.
\end{align*}
\end{theorem}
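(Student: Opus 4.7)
The plan is to specialize the general strong-duality identity from Theorem \ref{duality} to the Cressie--Read case and then eliminate the dual variable $\lambda$ in closed form. First I would compute the conjugate $(\lambda \phi_k)^*$ explicitly. Solving the pointwise first-order condition $\lambda \phi_k'(r) = y$ with $\phi_k'(r) = (r^{k-1}-1)/(k-1)$ and tracking the nonnegativity constraint $r \geq 0$ gives
\begin{equation*}
(\lambda \phi_k)^*(y) \;=\; \frac{\lambda}{k}\bigl[(1 + y(k-1)/\lambda)_+^{k_*} - 1\bigr],
\end{equation*}
where the $(\cdot)_+$ captures the kink at $r^{*}=0$, which occurs exactly when $y < -\lambda/(k-1)$. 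Substituting $y = \beta - Z^b$ yields
\begin{equation*}
\Phi_{\lambda,\beta}(Z^b) \;=\; \frac{\lambda}{k}\bigl[1 - (1 + (\beta - Z^b)(k-1)/\lambda)_+^{k_*}\bigr],
\end{equation*}
so Theorem \ref{duality} reduces the problem to maximizing
\begin{equation*}
F(\lambda,\beta) \;=\; \beta - \lambda\delta + \frac{\lambda}{k} - \frac{\lambda}{k}\int_{\mathbb{R}}\bigl(1 + (\beta - Z^b)(k-1)/\lambda\bigr)_+^{k_*}\,d\mu(b)
\end{equation*}
over $\lambda \geq 0$ and $\beta \in \mathbb{R}$.

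The key trick is a reparametrization that decouples the dual variables. I would set $\tau = \lambda/(k-1) \geq 0$ and then substitute $\alpha := \beta + \tau$. Using the identity $\lambda\bigl(1 + (\beta - Z^b)(k-1)/\lambda\bigr)^{k_*} = (k-1)\tau^{1-k_*}(\tau + \beta - Z^b)^{k_*}$ and collecting the linear terms in $\tau$, the objective simplifies to
\begin{equation*}
\alpha \;-\; \tau\cdot \frac{1 + k(k-1)\delta}{k} \;-\; \frac{k-1}{k}\,\tau^{1-k_*}\, H(\alpha),
\end{equation*}
where $H(\alpha) = \int_{\mathbb{R}}(\alpha - Z^b)_+^{k_*}\,d\mu(b)$. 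Remarkably, $\alpha$ appears only through $H(\alpha)$ and as a linear term, while $\tau$ now appears in a one-dimensional concave sub-problem whose coefficients involve $\alpha$ only through $H(\alpha)$.

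Next I would solve the inner problem over $\tau$ explicitly. Because $1 - k_* < 0$, the first-order condition yields $\tau^{k_*} = H(\alpha)/\bigl(1 + k(k-1)\delta\bigr)$, so $\tau^{*} = H(\alpha)^{1/k_*}/c_k(\delta)^{k-1}$. Plugging back and using $k_*/(k_*-1) = k$ together with $kc = c_k(\delta)^k$ (where $c = (1+k(k-1)\delta)/k$), the inner optimum equals $-c_k(\delta)\,H(\alpha)^{1/k_*}$. This produces
\begin{equation*}
\sup_{\alpha}\Bigl\{\alpha - c_k(\delta)\bigl(\textstyle\int(\alpha - Z^b)_+^{k_*}\,d\mu(b)\bigr)^{1/k_*}\Bigr\},
\end{equation*}
which, upon relabeling $\alpha$ as $\beta$, is precisely the claim.

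The main obstacle I anticipate is not conceptual but bookkeeping: matching the constant $c_k(\delta) = (1+k(k-1)\delta)^{1/k}$ correctly requires careful tracking of the exponents $k$, $k-1$, $k_* = k/(k-1)$, and $1/(k-1)$, and the reparametrization $\alpha = \beta + \tau$ has to be checked to be a genuine bijection between the relevant regions. One also has to justify boundary cases (when $H(\alpha) = 0$ the optimizer is $\tau^{*} = 0$, reducing the supremum to $\sup_\alpha \{\alpha : H(\alpha) = 0\} = \operatorname{ess\,inf} Z^B$, consistent with the $\lambda \to 0$ limit discussed after Theorem \ref{KLduality}), and that the required integrability $Z^b \in L^{k_*}(\mu)$ holds under Assumptions \ref{ass2}--\ref{ass3} so the envelope manipulations are legitimate.
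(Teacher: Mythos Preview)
Your proposal is correct and follows essentially the same route as the paper's proof: plug the explicit Cressie--Read conjugate into Theorem~\ref{duality}, shift the location parameter via $\tilde\beta = \beta + \lambda/(k-1)$ (your $\alpha = \beta + \tau$ with $\tau = \lambda/(k-1)$ is literally the same substitution), and then eliminate $\lambda$ (equivalently $\tau$) by the first-order condition to produce the factor $c_k(\delta)$. The only cosmetic difference is that you work with $\tau$ rather than $\lambda$, which changes nothing substantive.
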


\begin{proof}
From \citet{JohnDuchi}, we know that
\[
\phi_k^* (x) = \frac{1}{k}\left((k-1)x + 1\right)_+^{k_*} - \frac{1}{k}.
\]
Plugging this into Theorem~\ref{thm:phi-dual} gives
\begin{align*}
\mathcal R_{k,\delta}(\pi)
&=\sup_{\lambda \geq 0,\ \beta \in \mathbb{R}}
\left\{\beta - \lambda \delta + \int_{\RR^m}\Phi_{\lambda,\beta}\!\left(Z_\pi(b)\right)\mu(db)\right\}\\
&=\sup_{\lambda \geq 0,\ \beta \in \mathbb{R}}
\left\{\beta - \lambda \left(\delta - \frac{1}{k}\right)
- \lambda^{1-k_*}\frac{(k-1)^{k_*}}{k}
\int_{\RR^m}\left(\beta - Z_\pi(b)+ \frac{\lambda}{k-1}\right)_+^{{k_*}}\mu(db)\right\}\\
&=\sup_{\lambda \geq 0,\ \tilde{\beta} \in \mathbb{R}}
\left\{\tilde{\beta} - \lambda \left(\delta + \frac{1}{k(k-1)}\right)
- \lambda^{1-k_*}\frac{(k-1)^{k_*}}{k}
\int_{\RR^m}\left(\tilde{\beta} - Z_\pi(b)\right)_+^{{k_*}}\mu(db)\right\},
\end{align*}
where $\tilde{\beta} = \beta + \frac{\lambda}{k-1}$.
Noting that $\frac{k_* - 1}{k_*} = \frac{1}{k}$ and taking derivatives w.r.t.\ $\lambda$ yields
\[
\lambda
= (k-1)\left(\delta k (k-1)+1\right)^{-\frac{1}{k_*}}
\left(\int_{\RR^m}\left(\tilde{\beta} - Z_\pi(b)\right)_+^{{k_*}}\mu(db)\right)^{\frac{1}{k_*}}.
\]
Substituting back, we obtain
\begin{align*}
\mathcal R_{k,\delta}(\pi)
&= \sup_{\tilde{\beta} \in \mathbb{R}}\left\{
\tilde{\beta}
- \left(\delta k (k-1)+1\right)^{\frac{1}{k}}
\left(\int_{\RR^m}\left(\tilde{\beta} - Z_\pi(b)\right)_+^{{k_*}}\mu(db)\right)^{\frac{1}{k_*}}
\right\}\\
&=\sup_{\tilde{\beta} \in \mathbb{R}}\left\{
\tilde{\beta}
- c_k(\delta)\left(\int_{\RR^m}\left(\tilde{\beta} - Z_\pi(b)\right)_+^{{k_*}}\mu(db)\right)^{\frac{1}{k_*}}
\right\},
\end{align*}
which finishes the proof.
\end{proof}

\subsection{Compact Support of Dual Variables}\label{app:lambda-bounds}
\begin{theorem}\label{thm:lambda-compact}
Assume Assumption~\ref{asmp:Z-finite-lb} holds and $\delta>0$.
Fix $\pi\in\mathcal A(x_0)$ and let $\lambda^*(\pi)$ be any maximizer of the KL dual objective in
\eqref{eq:kl-dual-lam0}. Then any maximizer satisfies
\[
\lambda^*(\pi)\in[0,\overline\lambda_\pi],
\qquad
\overline\lambda_\pi:=\frac{\int_{\RR^m} Z_\pi(b)\,\mu(db)-m_\pi}{\delta}\ \in(0,\infty),
\]
where $m_\pi=\operatorname*{ess\,inf}_{b\sim\mu} Z_\pi(b)$.
Moreover, if Assumption~\ref{asmp:no-atom} holds, then $\lambda^*(\pi)>0$ and hence
$\lambda^*(\pi)\in(0,\overline\lambda_\pi]$.
\end{theorem}

\begin{proof}
Define for $\lambda>0$
\[
f_\pi(\lambda):=-\lambda\delta-\lambda\log\!\left(\int_{\RR^m}\exp\!\left(-\frac{Z_\pi(b)}{\lambda}\right)\mu(db)\right),
\]
and extend $f_\pi(0):=m_\pi$ as in Theorem~\ref{thm:kl-dual}.
By Jensen's inequality,
\begin{align*}
\log\!\left(\int_{\RR^m}\exp\!\left(-\frac{Z_\pi(b)}{\lambda}\right)\mu(db)\right)
&\ge
\int_{\RR^m}\log\!\left(\exp\!\left(-\frac{Z_\pi(b)}{\lambda}\right)\right)\mu(db)\\
&=
-\frac{1}{\lambda}\int_{\RR^m} Z_\pi(b)\,\mu(db).
\end{align*}
Therefore, for all $\lambda>0$,
\[
f_\pi(\lambda)\le \int_{\RR^m} Z_\pi(b)\,\mu(db)-\lambda\delta.
\]
On the other hand, since $f_\pi(0)=m_\pi$ and $\lambda^*(\pi)$ maximizes $f_\pi$ over $\lambda\ge 0$, we have
$f_\pi(\lambda^*(\pi))\ge m_\pi$. Combining with the upper bound yields
\[
\int_{\RR^m} Z_\pi(b)\,\mu(db)-\lambda^*(\pi)\delta \ \ge\ m_\pi,
\]
hence
\[
\lambda^*(\pi)\le \overline\lambda_\pi:=\frac{\int Z_\pi\,d\mu - m_\pi}{\delta}.
\]
This proves $\lambda^*(\pi)\in[0,\overline\lambda_\pi]$.

It remains to show that $\lambda^*(\pi)>0$ under Assumption~\ref{asmp:no-atom}.
Write $Z_\pi(b)=m_\pi+\Delta_\pi(b)$ where $\Delta_\pi(b)\ge 0$ $\mu$-a.s.
Then for $\lambda>0$,
\[
f_\pi(\lambda)
=
m_\pi-\lambda\delta-\lambda\log\!\left(\int_{\RR^m}\exp\!\left(-\frac{\Delta_\pi(b)}{\lambda}\right)\mu(db)\right).
\]
As $\lambda\downarrow 0$,
\(
\exp(-\Delta_\pi(b)/\lambda)\to \mathbf 1\{\Delta_\pi(b)=0\}
\)
and by dominated convergence,
\[
\int \exp\!\left(-\frac{\Delta_\pi(b)}{\lambda}\right)\mu(db)\ \longrightarrow\ 
\mu(\Delta_\pi=0)=p_\pi.
\]
Hence the right-derivative at $0$ satisfies
\[
\lim_{\lambda\downarrow 0}\frac{f_\pi(\lambda)-f_\pi(0)}{\lambda}
=
-\delta-\log p_\pi.
\]
Under Assumption~\ref{asmp:no-atom}, $p_\pi<e^{-\delta}$, so $-\delta-\log p_\pi>0$, meaning $f_\pi(\lambda)>f_\pi(0)$
for all sufficiently small $\lambda>0$. Therefore $\lambda=0$ cannot be optimal and $\lambda^*(\pi)>0$.
\end{proof}

\section{Details of Section \ref{sec:duality}}
In this section, we first provide the proof of Theorem \ref{thm:kl-clt-main}. Then we provide the details of Algorithm \ref{alg:kl-eval} and discuss the extension to general $\phi$-divergence.

\subsection{Proof of Theorem~\ref{thm:kl-clt-main}}
\label{app:kl-clt}

\begin{proof}
Fix $\pi\in\mathcal A(x_0)$ throughout and suppress $\pi$ from the notation when there is no ambiguity.
Recall that for $\lambda>0$,
\[
M(\lambda):=M_\pi(\lambda)=\int_{\RR^m}\exp\!\left(-\frac{Z_\pi(b)}{\lambda}\right)\mu(db),
\qquad
\widehat M_n(\lambda):=\widehat M_{n,\pi}(\lambda)=\frac{1}{n}\sum_{i=1}^n \mathcal E^{b_i}_\lambda,
\]
where $\mathcal E^{b}_\lambda$ is defined in \eqref{eq:rmlmc-single}.
We also write
\[
f(\lambda):=-\lambda\delta-\lambda\log M(\lambda),
\qquad
\widehat f_n(\lambda):=-\lambda\delta-\lambda\log\widehat M_n(\lambda),
\]
so that $\mathcal R_\delta(\pi)=\sup_{\lambda>0}f(\lambda)$ and
$\widehat{\mathcal R}_\delta(\pi)=\sup_{\lambda>0}\widehat f_n(\lambda)$.

By Appendix~\ref{app:lambda-bounds}, under Assumptions~\ref{asmp:Z-finite-lb}--\ref{asmp:no-atom} there exists a
compact interval $K:=[\underline\lambda_\pi,\overline\lambda_\pi]\subset(0,\infty)$ such that the (unique) maximizer
$\lambda^*=\lambda^*(\pi)$ of $f$ lies in $K$. Define
\[
\mathcal R_{\delta,K}(\pi):=\sup_{\lambda\in K}f(\lambda),
\qquad
\widehat{\mathcal R}_{\delta,K}(\pi):=\sup_{\lambda\in K}\widehat f_n(\lambda).
\]
Since $\lambda^*\in K$, $\mathcal R_{\delta,K}(\pi)=\mathcal R_\delta(\pi)$.
Moreover, the compact containment argument implies that maximizing over $\lambda>0$ is asymptotically equivalent
to maximizing over $K$ (the difference is $o_\PP(n^{-1/2})$), hence it suffices to prove the CLT for
$\widehat{\mathcal R}_{\delta,K}(\pi)$.

Fix $\lambda\in K$.
We show that
\begin{equation}
\label{eq:finite-var-kl}
\text{Var}\!\left(\mathcal E^B_\lambda\right)<\infty.
\end{equation}

By Theorem~\ref{thm:kl-clt-main}, there exists a constant $\underline z\in\RR$ such that
$\widehat Z^{\,b}_j\ge \underline z$ almost surely for all $b$ and $j$. In particular, almost surely,
\[
\frac{S^b_\ell}{\ell}\ge \underline z,\qquad
\frac{S^{O,b}_\ell}{\ell}\ge \underline z,\qquad
\frac{S^{E,b}_\ell}{\ell}\ge \underline z
\qquad\text{for all }b\text{ and }\ell\ge 1.
\]

Write $m_\pi:=\operatorname*{ess\,inf}_{b\sim\mu}Z_\pi(b)>-\infty$ (Assumption~\ref{asmp:Z-finite-lb}).
Since $\lambda\ge \underline\lambda_\pi>0$ and $\widehat Z^{\,b}_j\ge \underline z$ almost surely, the map
$\Phi_\lambda(x):=e^{-x/\lambda}$ satisfies the uniform derivative bounds on $K\times[\underline z,\infty)$:
\begin{equation}
\label{eq:Phi-der-bnd}
\sup_{\lambda\in K}\sup_{x\ge \underline z}|\Phi'_\lambda(x)|
\le \frac{1}{\underline\lambda_\pi}\exp\!\left(-\frac{\underline z}{\underline\lambda_\pi}\right),
\qquad
\sup_{\lambda\in K}\sup_{x\ge \underline z}|\Phi''_\lambda(x)|
\le \frac{1}{\underline\lambda_\pi^{\,2}}\exp\!\left(-\frac{\underline z}{\underline\lambda_\pi}\right).
\end{equation}
In particular, for each $\lambda\in K$,
\[
0<\Phi_\lambda(x)\le \exp\!\left(-\frac{\underline z}{\underline\lambda_\pi}\right)
\quad\text{for all }x\ge \underline z,
\]
so the base term $\Phi_\lambda(S^B_{2^{n_0}}/2^{n_0})$ in \eqref{eq:rmlmc-single} is bounded and hence has finite
second moment.

It remains to control the multilevel correction term $\Delta^B_{N^B}/p(N^B)$.
For deterministic levels $n\ge n_0$, define
\[
\bar Z^{\,B}_{n+1}:=\frac{S^B_{2^{n+1}}}{2^{n+1}},
\qquad
\bar Z^{\,O,B}_{n}:=\frac{S^{O,B}_{2^{n}}}{2^{n}},
\qquad
\bar Z^{\,E,B}_{n}:=\frac{S^{E,B}_{2^{n}}}{2^{n}},
\]
so that
\[
\Delta^B_n=\Phi_\lambda(\bar Z^{\,B}_{n+1})
-\frac{1}{2}\Big(\Phi_\lambda(\bar Z^{\,O,B}_{n})+\Phi_\lambda(\bar Z^{\,E,B}_{n})\Big).
\]
For each $n\ge n_0$, apply Taylor's theorem to $\Phi_\lambda(\cdot)$ around $Z_\pi(B)$:
there exist random points $\xi^{B}_{n+1}$ between $Z_\pi(B)$ and $\bar Z^{\,B}_{n+1}$,
$\xi^{O,B}_{n}$ between $Z_\pi(B)$ and $\bar Z^{\,O,B}_{n}$, and
$\xi^{E,B}_{n}$ between $Z_\pi(B)$ and $\bar Z^{\,E,B}_{n}$ such that
\begin{align*}
\Phi_\lambda(\bar Z^{\,B}_{n+1})
&=\Phi_\lambda(Z_\pi(B))
+\Phi'_\lambda(Z_\pi(B))\big(\bar Z^{\,B}_{n+1}-Z_\pi(B)\big)
+\frac{1}{2}\Phi''_\lambda(\xi^{B}_{n+1})\big(\bar Z^{\,B}_{n+1}-Z_\pi(B)\big)^2,\\
\Phi_\lambda(\bar Z^{\,O,B}_{n})
&=\Phi_\lambda(Z_\pi(B))
+\Phi'_\lambda(Z_\pi(B))\big(\bar Z^{\,O,B}_{n}-Z_\pi(B)\big)
+\frac{1}{2}\Phi''_\lambda(\xi^{O,B}_{n})\big(\bar Z^{\,O,B}_{n}-Z_\pi(B)\big)^2,\\
\Phi_\lambda(\bar Z^{\,E,B}_{n})
&=\Phi_\lambda(Z_\pi(B))
+\Phi'_\lambda(Z_\pi(B))\big(\bar Z^{\,E,B}_{n}-Z_\pi(B)\big)
+\frac{1}{2}\Phi''_\lambda(\xi^{E,B}_{n})\big(\bar Z^{\,E,B}_{n}-Z_\pi(B)\big)^2.
\end{align*}
Using the identity $\bar Z^{\,B}_{n+1}=\frac12(\bar Z^{\,O,B}_{n}+\bar Z^{\,E,B}_{n})$, the zeroth- and
first-order terms cancel exactly, giving
\begin{align}
\label{eq:Delta-remainder}
\Delta^B_n
&=
\frac{1}{2}\Phi''_\lambda(\xi^{B}_{n+1})\big(\bar Z^{\,B}_{n+1}-Z_\pi(B)\big)^2
-\frac{1}{4}\Phi''_\lambda(\xi^{O,B}_{n})\big(\bar Z^{\,O,B}_{n}-Z_\pi(B)\big)^2
-\frac{1}{4}\Phi''_\lambda(\xi^{E,B}_{n})\big(\bar Z^{\,E,B}_{n}-Z_\pi(B)\big)^2.
\end{align}
By \eqref{eq:Phi-der-bnd} and the fact that $\xi^{B}_{n+1},\xi^{O,B}_{n},\xi^{E,B}_{n}\ge \underline z$ almost surely,
there is a constant $C_K<\infty$ (depending only on $K$ and $\underline z$) such that
$|\Phi''_\lambda(\xi)|\le C_K$ for all $\lambda\in K$ and all $\xi\ge \underline z$.
Therefore, from \eqref{eq:Delta-remainder},
\begin{equation}
\label{eq:Delta-square-bound}
(\Delta^B_n)^2
\ \le\
C_K\Big(
\big|\bar Z^{\,B}_{n+1}-Z_\pi(B)\big|^4
+\big|\bar Z^{\,O,B}_{n}-Z_\pi(B)\big|^4
+\big|\bar Z^{\,E,B}_{n}-Z_\pi(B)\big|^4
\Big).
\end{equation}

Next we bound the fourth moments of the sample-mean errors. Under Assumption~\ref{asmp:Z-moment} (and assuming
$3(1+w)\ge 4$), we have $E[|\widehat Z^{\,B}_1|^4]<\infty$ and $E[|Z_\pi(B)|^4]<\infty$.
By a Marcinkiewicz--Zygmund inequality (or standard moment bounds for averages of i.i.d.\ variables), there is a
constant $C<\infty$ such that for all $n\ge n_0$,
\begin{equation}
\label{eq:mean-4th}
E\Big[\big|\bar Z^{\,B}_{n+1}-Z_\pi(B)\big|^4\Big]
\le
\frac{C}{2^{2(n+1)}}\,E\Big[\big|\widehat Z^{\,B}_1-Z_\pi(B)\big|^4\Big]
\ \lesssim\ 2^{-2n},
\end{equation}
and the same bound holds with $\bar Z^{\,O,B}_{n}$ and $\bar Z^{\,E,B}_{n}$ in place of $\bar Z^{\,B}_{n+1}$
(up to constants), since the odd and even subsamples are i.i.d.\ given $B$.
Combining \eqref{eq:Delta-square-bound} and \eqref{eq:mean-4th}, we obtain
\begin{equation}
\label{eq:Delta-decay}
E\big[(\Delta^B_n)^2\big]\ \lesssim\ 2^{-2n}.
\end{equation}

Finally, since $N^B=\widetilde N+n_0$ with $\widetilde N\sim\mathrm{Geo}(R)$ and $R\in(1/2,3/4)$, its pmf
$p(n)$ decays geometrically. Using \eqref{eq:Delta-decay},
\begin{align*}
E\!\left[\left(\frac{\Delta^B_{N^B}}{p(N^B)}\right)^2\right]
&=
\sum_{n=n_0}^{\infty}\frac{E[(\Delta^B_n)^2]}{p(n)}
\ \lesssim\
\sum_{n=n_0}^{\infty}\frac{2^{-2n}}{p(n)}
\ <\ \infty,
\end{align*}
since $2^{-2n}=4^{-n}$ and $p(n)$ is geometric with ratio $R>1/2>1/4$.
Therefore $E[(\Delta^B_{N^B}/p(N^B))^2]<\infty$, and together with the boundedness of the base term this proves
\eqref{eq:finite-var-kl}.

Now we show the CLT.
Define the $C(K)$-valued random element
\[
\mathbb G_n(\lambda):=\sqrt n\big(\widehat M_n(\lambda)-M(\lambda)\big),\qquad \lambda\in K.
\]
For each fixed $\lambda\in K$, $\widehat M_n(\lambda)$ is an average of i.i.d.\ terms $\mathcal E^B_\lambda$ with
finite variance, hence the pointwise CLT holds:
\[
\mathbb G_n(\lambda)\Rightarrow \mathcal N\big(0,\text{Var}(\mathcal E^B_\lambda)\big).
\]
On $K$, the map
$\lambda\mapsto \mathcal E^B_\lambda$ is Lipschitz with a square-integrable random Lipschitz constant.
Indeed, under Assumptions~\ref{asmp:Z-finite-lb} and~\ref{asmp:Z-moment}, together with the smoothness of
$\Phi_\lambda$ on $K$ and the rMLMC construction, one can bound $\sup_{\lambda\in K}|\partial_\lambda \mathcal
E^B_\lambda|$ in $L^2$, implying
\[
\sup_{\lambda,\lambda'\in K}\frac{|\mathcal E^B_\lambda-\mathcal E^B_{\lambda'}|}{|\lambda-\lambda'|}\in L^2.
\]
Consequently, $\{\mathbb G_n\}_{n\ge 1}$ is tight in $C(K)$ and the finite-dimensional CLTs imply the functional
CLT
\begin{equation}
\label{eq:func-clt-kl}
\mathbb G_n \Rightarrow \mathbb G \quad\text{in }C(K),
\end{equation}
for some mean-zero Gaussian process $\mathbb G$ on $K$.
In particular, $\sup_{\lambda\in K}|\widehat M_n(\lambda)-M(\lambda)|\to 0$ in probability.
Since $M(\lambda)>0$ for all $\lambda>0$ and $K$ is compact, $\inf_{\lambda\in K}M(\lambda)>0$, hence with
probability tending to $1$ we have $\inf_{\lambda\in K}\widehat M_n(\lambda)>0$ and $\log \widehat
M_n(\lambda)$ is well-defined on $K$.

Define $T:C(K)\to C(K)$ by $(T\varphi)(\lambda)=\log(\varphi(\lambda))$.
On $\{\varphi:\inf_{\lambda\in K}\varphi(\lambda)>0\}$, $T$ is Hadamard differentiable with derivative
\[
(T'_M h)(\lambda)=\frac{h(\lambda)}{M(\lambda)}.
\]
Applying the functional delta theorem to \eqref{eq:func-clt-kl} yields
\[
\sqrt n\big(\log\widehat M_n(\cdot)-\log M(\cdot)\big)
\Rightarrow
\frac{\mathbb G(\cdot)}{M(\cdot)}
\quad\text{in }C(K).
\]
Since the map $S:C(K)\to C(K)$ given by $(S\psi)(\lambda):=-\lambda\delta-\lambda\psi(\lambda)$ is continuous
linear, we obtain
\[
\sqrt n\big(\widehat f_n(\cdot)-f(\cdot)\big)
\Rightarrow
-\lambda\,\frac{\mathbb G(\lambda)}{M(\lambda)}
\quad\text{in }C(K).
\]

Finally, define the supremum functional $V:C(K)\to\RR$ by $V(\varphi)=\sup_{\lambda\in K}\varphi(\lambda)$.
By strict concavity of $f$ in the KL case (hence uniqueness of $\lambda^*$ in $K$),
$V$ is Hadamard directionally differentiable at $f$ with $V'_f(h)=h(\lambda^*)$.
Therefore,
\[
\sqrt n\big(\widehat{\mathcal R}_{\delta,K}(\pi)-\mathcal R_{\delta,K}(\pi)\big)
=\sqrt n\big(V(\widehat f_n)-V(f)\big)
\Rightarrow
-\lambda^*\frac{\mathbb G(\lambda^*)}{M(\lambda^*)}.
\]
Since $\mathbb G(\lambda^*)\sim \mathcal N(0,\text{Var}(\mathcal E^B_{\lambda^*}))$, we conclude
\[
-\lambda^*\frac{\mathbb G(\lambda^*)}{M(\lambda^*)}
\sim
\mathcal N\!\left(0,\ \frac{(\lambda^*)^2\,\text{Var}(\mathcal E^B_{\lambda^*})}{(M(\lambda^*))^2}\right).
\]
Removing the restriction to $K$ completes the proof of Theorem~\ref{thm:kl-clt-main}.
\end{proof}

\subsection{Details of Algorithm~\ref{alg:kl-eval}}
\label{app:kl-eval-details}

In this subsection we provide implementation details for Algorithm~\ref{alg:kl-eval}, in particular how to
construct a stochastic ascent direction $\widehat g_k(\lambda)$ for the plug-in objective
$\widehat f_{n,\pi}(\lambda)=-\lambda\delta-\lambda\log(\widehat M_{n,\pi}(\lambda))$.

Fix $\lambda>0$ and recall that $\widehat M_{n,\pi}(\lambda)=\frac1n\sum_{i=1}^n \mathcal E^{b_i}_\lambda$ with
$\mathcal E^{b}_\lambda$ defined in \eqref{eq:rmlmc-single}. Differentiating the plug-in objective gives
\begin{equation}
\label{eq:ghat-kl}
\frac{\partial}{\partial\lambda}\widehat f_{n,\pi}(\lambda)
=
-\delta-\log\big(\widehat M_{n,\pi}(\lambda)\big)
-\lambda\frac{\widehat M'_{n,\pi}(\lambda)}{\widehat M_{n,\pi}(\lambda)},
\end{equation}
where $\widehat M'_{n,\pi}(\lambda)$ is a Monte Carlo estimate of $\frac{\partial}{\partial\lambda}M_\pi(\lambda)$.
We construct $\widehat M'_{n,\pi}(\lambda)$ using the same rMLMC coupling as for $\widehat M_{n,\pi}(\lambda)$.

Define the auxiliary function
\[
A_\lambda(x):=\frac{x}{\lambda^2}\exp\!\left(-\frac{x}{\lambda}\right),\qquad x\in\RR,\ \lambda>0,
\]
so that $\frac{\partial}{\partial\lambda}\Phi_\lambda(x)=A_\lambda(x)$ for $\Phi_\lambda(x)=\exp(-x/\lambda)$.
For each outer draw $b\sim\mu$ and the same inner samples used to form $\mathcal E^b_\lambda$, define
\[
\Delta^{b,\mathrm{der}}_{N^b}
:=
A_\lambda\!\left(\frac{S_{2^{N^b+1}}^b}{2^{N^b+1}}\right)
-\frac{1}{2}\left(
A_\lambda\!\left(\frac{S_{2^{N^b}}^{O,b}}{2^{N^b}}\right)
+
A_\lambda\!\left(\frac{S_{2^{N^b}}^{E,b}}{2^{N^b}}\right)
\right),
\]
and the corresponding single-sample estimator
\begin{equation}
\label{eq:rmlmc-single-der}
\mathcal E^{b,\mathrm{der}}_\lambda
:=
A_\lambda\!\left(\frac{S^b_{2^{n_0}}}{2^{n_0}}\right)
+\frac{\Delta^{b,\mathrm{der}}_{N^b}}{p(N^b)}.
\end{equation}
Given $n$ i.i.d.\ outer samples $b_1,\dots,b_n\sim\mu$, set
\[
\widehat M'_{n,\pi}(\lambda):=\frac{1}{n}\sum_{i=1}^n \mathcal E^{b_i,\mathrm{der}}_\lambda.
\]
Then a natural stochastic ascent direction for Algorithm~\ref{alg:kl-eval} is
\begin{equation}
\label{eq:stoch-ascent}
\widehat g_k(\lambda)
:=
-\delta-\log\big(\widehat M_{n,\pi}(\lambda)\big)
-\lambda\frac{\widehat M'_{n,\pi}(\lambda)}{\widehat M_{n,\pi}(\lambda)},
\end{equation}
which mimics \eqref{eq:ghat-kl}. Under mild regularity (e.g., the same moment and lower-bound conditions used in
Theorem~\ref{thm:kl-clt-main}), $\widehat g_k(\lambda)$ is stable and can be combined with standard step-size
choices $\{\alpha_k\}$ (constant, diminishing, or Adam-style schedules) to perform stochastic ascent.

\begin{remark}
Once an rMLMC estimator is built for a transformed
expectation, one obtains a stochastic gradient by differentiating the transform and applying the same multilevel
coupling to the derivative transform. In the KL case, the \emph{population} objective is strictly concave in
$\lambda$ (whenever $Z_\pi(B)$ is not $\mu$-a.s.\ constant), so gradient ascent is well behaved.
\end{remark}

\subsection{Extension to General $\phi$-Divergence}
\label{app:phi-eval}

In the main text we focus on KL ambiguity. Here we briefly describe how the rMLMC policy evaluation approach
extends to a general $\phi$-divergence ball.

Recall the general dual representation:
for fixed $\pi\in\mathcal A(x_0)$,
\begin{equation}
\label{eq:phi-dual-app}
\mathcal R_{\phi,\delta}(\pi)
=
\sup_{\lambda\ge 0,\ \beta\in\RR}
\left\{
\beta-\lambda\delta+\int_{\RR^m}\Phi_{\lambda,\beta}\!\left(Z_\pi(b)\right)\mu(db)
\right\},
\qquad
\Phi_{\lambda,\beta}(z):=-(\lambda\phi)^*(\beta-z).
\end{equation}
Define the inner target
\[
G_{\pi}(\lambda,\beta):=\int_{\RR^m}\Phi_{\lambda,\beta}\!\left(Z_\pi(b)\right)\mu(db).
\]
Fix $(\lambda,\beta)$ and draw $b\sim\mu$.
As in the KL case, sample $N^b=\widetilde N+n_0$ with $\widetilde N\sim\mathrm{Geo}(R)$ and generate
$2^{N^b+1}$ i.i.d.\ inner samples $\{\widehat Z^{\,b}_j\}_{1\le j\le 2^{N^b+1}}$ with
$E[\widehat Z^{\,b}_j]=Z_\pi(b)$.
Let $S^b_\ell, S^{O,b}_\ell,S^{E,b}_\ell$ be the corresponding full/odd/even partial sums as in the main text.
Define the multilevel correction
\[
\Delta^{b}_{N^b,\lambda,\beta}
:=
\Phi_{\lambda,\beta}\!\left(\frac{S_{2^{N^b+1}}^b}{2^{N^b+1}}\right)
-\frac{1}{2}\left(
\Phi_{\lambda,\beta}\!\left(\frac{S_{2^{N^b}}^{O,b}}{2^{N^b}}\right)
+
\Phi_{\lambda,\beta}\!\left(\frac{S_{2^{N^b}}^{E,b}}{2^{N^b}}\right)
\right),
\]
and the single-sample estimator
\begin{equation}
\label{eq:rmlmc-phi-single}
\mathcal E^{b}_{\lambda,\beta}
:=
\Phi_{\lambda,\beta}\!\left(\frac{S^b_{2^{n_0}}}{2^{n_0}}\right)
+\frac{\Delta^{b}_{N^b,\lambda,\beta}}{p(N^b)}.
\end{equation}
With $n$ i.i.d.\ draws $b_1,\dots,b_n\sim\mu$, define
\begin{equation}
\label{eq:rmlmc-phi-outer}
\widehat G_{n,\pi}(\lambda,\beta):=\frac{1}{n}\sum_{i=1}^n \mathcal E^{b_i}_{\lambda,\beta}.
\end{equation}
Under mild integrability conditions (e.g., growth control of $\Phi_{\lambda,\beta}$ on a compact parameter set
together with Assumption~\ref{asmp:Z-moment}), the estimator \eqref{eq:rmlmc-phi-outer} is unbiased for
$G_{\pi}(\lambda,\beta)$ and has finite variance.

We then form the empirical dual objective
\begin{equation}
\label{eq:phi-emp-obj}
\widehat Q_{\phi,\delta}(\pi)
:=
\sup_{\lambda\ge 0,\ \beta\in\RR}
\left\{
\beta-\lambda\delta+\widehat G_{n,\pi}(\lambda,\beta)
\right\}.
\end{equation}
In contrast to the KL reduction \eqref{eq:kl-dual-lam0}, the joint maximization over $(\lambda,\beta)$ in
\eqref{eq:phi-emp-obj} is not guaranteed to be strictly concave for a general $\phi$ (but it is always concave). Consequently, we recommend using stochastic gradient methods for the maximization in
\eqref{eq:phi-emp-obj} in the general case. One can obtain gradient estimators by
differentiating $\Phi_{\lambda,\beta}$ (when differentiable) and applying the same multilevel coupling as in
\eqref{eq:rmlmc-phi-single}. This yields stochastic ascent directions for both $\lambda$ and $\beta$ and allows
alternating optimization between policy evaluation and policy learning.

For the Cressie--Read family $\phi_k(x)=\frac{x^k-kx+k-1}{k(k-1)}$ with $k>1$, the convex conjugate
$\phi_k^*$ admits a closed form (see, e.g., \citet{JohnDuchi}), which leads to a simplified dual expression that
eliminates $\lambda$ and reduces the dual to a one-dimensional maximization over $\beta$ (Appendix~\ref{app:cr}).
Moreover, because the resulting objective has the form ``linear minus an $L_{k_*}$-norm'' with $k_*=\frac{k}{k-1}>1$,
it is concave and typically strictly concave under mild non-degeneracy of $Z_\pi(B)$, implying uniqueness of the
dual optimizer $\beta^*$.

The CLT proof in Theorem~\ref{thm:kl-clt-main} follows a standard path: (i) a uniform CLT for the rMLMC
estimator over a compact parameter set, and (ii) a functional delta theorem for the corresponding maximization
functional. The same approach extends to the general $\phi$-divergence objective \eqref{eq:phi-emp-obj} under
mild conditions ensuring (a) finite variance and stochastic equicontinuity of
$(\lambda,\beta)\mapsto \widehat G_{n,\pi}(\lambda,\beta)$ on compact sets, and (b) suitable regularity of the
argmax mapping (e.g., uniqueness or a directional derivative characterization). 

\section{Details of Section \ref{sec:policy-learning}}\label{sec:finance}
In this section, we first review the solutions of the Merton problem mentioned in Section \ref{sec:merton}. 
\subsection{Optimal Solutions of the Bayesian Merton Problem}
By adapting the methods and results from \citet{KZ98}, the optimal solution is given by the following theorem. 
\begin{theorem}\label{Karatzassolution}
    The optimal value function of Problem (\ref{eq:merton-bayes}) is given by
   $$V(x_0) = \frac{\left(x_0e^{rT}\right)^{\alpha}}{\alpha}\left(\int_{\mathbb{R}^d}\left(F(T,z)\right)^{\frac{1}{1-\alpha}}\varphi_T(z)dz\right)^{1-\alpha},$$
   and the optimal fractions invested in each stock for each time $t \geq 0$ is given by the vector
   $$\frac{\pi^*_t}{X^*_t} = {\left(\sigma^T\right)}^{-1}\frac{\int_{\mathbb{R}^d}\nabla F\left(T,z+Y_t\right)\left(F\left(T,z+Y_t\right)\right)^{\frac{\alpha}{1-\alpha}}\varphi_{T-t}(z)dz}{(1-\alpha) \int_{\mathbb{R}^d}\left(F\left(T,z+Y_t\right)\right)^{\frac{1}{1-\alpha}}\varphi_{T-t}(z)dz},$$
   where for fixed $s > 0$, $\varphi_s$ is the density function of the $d$-dimensional multivariate Gaussian distribution $\mathcal{N}(0,sI_d)$ ($0$ is the zero $d \times 1$ vector and $I_d$ is the $d \times d$ identity matrix), 
   \begin{equation}\label{YS}
    Y_t =\sigma^{-1} \left(B-r\left(1, \ldots, 1\right)^T\right)t + W_t,   
   \end{equation}
 and $F(t,y): =F_{\mu}(t,y) = \int_{\mathbb{R}^d}L_t(z,y)d\mu(z)$ with $L_t(z,y) = 1$ if $t = 0$ and $$L_t(z,y) = \exp\left(\left(\sigma^{-1}\left(z-r\left(1, \ldots, 1\right)^T\right)\right)^Ty - \frac{1}{2}\left\Vert\sigma^{-1}\left(z-r\left(1, \ldots, 1\right)^T\right)\right\Vert^2t\right)$$ if $t > 0$. Moreover, the filtration generated by the process $\{Y_t\}_{t \in [0,T]}$ is the same as $\mathcal{F}^S$ under the probability measure $P$.
\end{theorem}
In practice, the prior distribution is chosen by experts and other available information, and in reality the fraction of investment into risky asset is computed via the formula provided by Theorem \ref{Karatzassolution} with real observations.
\subsection{Alternate Admissible Controls}\label{alterdef}
In this section, we give the definition of the admissible controls that will help the tractability of the general case.

To illustrate our definition of the admissible set, we go back to Problem (\ref{eq:merton-bayes}) with the strictly concave utility function $u$. It is shown that the optimal terminal wealth $X^*_T$ is given by 
$X^*_T = I\left(\frac{\mathcal{K}(x_0)e^{-rT}}{F(T,Y(T))}\right)$ \citep{KZ98}.  
For a density function $\lambda: \mathbb{R} \to \mathbb{R}$, we define the inner product $\left\langle E^{P^.}\left[u(X^*_T)\right], \lambda \right\rangle := \int_{\mathbb{R}}E^{P^b}\left[u(X^*_T)\right]\lambda(b)d\mu(b) = \int_{\mathbb{R}}E^{P^b}\left[\left(u \circ I\right)\left(\frac{\mathcal{K}(x_0)e^{-rT}}{F(T,Y(T))}\right)\right]\lambda(b)d\mu(b).$
% \begin{align*}
% \int_{\mathbb{R}}E^{P^b}\left[u(\hat{X}_T)\right]\lambda(b)d\mu(b) &= \int_{\mathbb{R}}E^{P^b}\left[u\left(I\left(\frac{\mathcal{K}(x_0)e^{-rT}}{F(T,Y(T))}\right)\right)\right]\lambda(b)d\mu(b)\\
% &= \int_{\mathbb{R}}E^{P^b}\left[\left(u \circ I\right)\left(\frac{\mathcal{K}(x_0)e^{-rT}}{F(T,Y(T))}\right)\right]\lambda(b)d\mu(b).
% \end{align*}
% With some regularity assumptions in Assumption 3.1 in \citet{KZ98}, $\mathcal{K}$ is a continuous and invertible function of $x$.
We want to see whether there exists a function $h: \mathbb{R} \to \mathbb{R}$ and a functional $\rho: L^1 \to \mathbb{R}$ such that
\begin{equation}\label{eq1}
\int_{\mathbb{R}}E^{P^b}\left[u(X^*_T)\right]\lambda(b)d\mu(b) = h(x_0)\rho(\lambda)    
\end{equation}
and \begin{equation}\label{eq2}
    h(1) = u(e^{rT}).
\end{equation}

\begin{theorem}\label{decompose}
    For the following cases (1) and (2), the conditions (\ref{eq1}) and (\ref{eq2}) are met, where for case (3), the conditions are not met: 
    \begin{itemize}
        \item (1) $u(x) = \frac{1}{\alpha}x^{\alpha} $, where $\alpha < 1$ and $\alpha \neq 0$.
        \item (2) $u(x) = \frac{-1}{\gamma}e^{-\gamma x} $, where $\gamma > 0$.
        \item (3) $u(x) = \log(x)$.
    \end{itemize}
\end{theorem}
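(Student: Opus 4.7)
The plan is to solve explicitly for the optimal terminal wealth $X^*_T = I(\mathcal{K}(x_0)e^{-rT}/F(T,Y_T))$ from Theorem \ref{Karatzassolution} in each of the three cases, determine the Lagrange multiplier $\mathcal{K}(x_0)$ from the budget constraint tying it to $x_0$, and then examine how $x_0$ and $b$ enter $E^{P^b}[u(X^*_T)]$. The guiding heuristic is that cases (1) and (2) produce a multiplicatively separable dependence on $(x_0,b)$ that can be factored as $h(x_0)\rho(\lambda)$, while case (3) produces a purely additive structure in $\log x_0$ that cannot be rewritten as a product.

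For case (1), $I(y)=y^{1/(\alpha-1)}$ and the budget constraint forces $\mathcal{K}(x_0)^{1/(\alpha-1)}$ to be proportional to $x_0$, so $u(X^*_T)$ equals $x_0^\alpha/\alpha$ times a $(b,Y_T)$-dependent random variable with no remaining $x_0$-dependence. Choosing $h(x_0)=(x_0 e^{rT})^\alpha/\alpha$ and letting $\rho(\lambda)$ absorb the residual integral against $\lambda$ gives (\ref{eq1}), and $h(1)=e^{\alpha rT}/\alpha=u(e^{rT})$ is immediate.

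For case (2), $I(y)=-\gamma^{-1}\log y$ makes the budget constraint linear in $\log \mathcal{K}(x_0)$, which forces $\mathcal{K}(x_0)$ to be proportional to $e^{-\gamma x_0 e^{rT}}$. Consequently $u(X^*_T)=-\gamma^{-1}e^{-\gamma X^*_T}$ factors as $e^{-\gamma x_0 e^{rT}}$ times a $(b,Y_T)$-dependent factor. Taking $h(x_0)=-\gamma^{-1}e^{-\gamma x_0 e^{rT}}$ yields (\ref{eq1}) with an appropriate $\rho$, and $h(1)=u(e^{rT})$ holds by direct substitution.

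For case (3), $I(y)=1/y$ gives $X^*_T=x_0 F(T,Y_T)/c_3$ for a constant $c_3$ coming from the budget constraint, so $u(X^*_T)=\log x_0+\log F(T,Y_T)-\log c_3$ and the $x_0$-dependence is additive rather than multiplicative. To rule out (\ref{eq1}), I will assume for contradiction that $h$ and $\rho$ exist, fix a density $\lambda_0$ with $\rho(\lambda_0)\neq 0$ and $\int\lambda_0 d\mu\neq 0$, and solve for $h(x_0)$, which is then forced to be an affine function of $\log x_0$. Comparing coefficients of $\log x_0$ and of the constant term across arbitrary $\lambda$ will show that $\int E^{P^b}[\log F(T,Y_T)]\lambda(b)d\mu(b)$ must be a fixed scalar multiple of $\int \lambda\, d\mu$, which in turn forces $b\mapsto E^{P^b}[\log F(T,Y_T)]$ to be $\mu$-a.s.\ constant. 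This constancy fails generically because under $P^b$ the variable $Y_T=((b-r)/\sigma)T+W_T$ is Gaussian with $b$-dependent mean while $F$ is a fixed nontrivial function. I expect the main obstacle to be making the impossibility in case (3) fully rigorous without any smoothness assumption on $h$ or $\rho$: the coefficient-matching argument handles the lack of regularity, but one must also verify integrability of $\log F(T,Y_T)$ under $P^b$ and non-constancy of $b\mapsto E^{P^b}[\log F(T,Y_T)]$, for which differentiating in $b$ under the expectation is the natural route.
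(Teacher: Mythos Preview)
Your approach for cases (1) and (2) matches the paper's: compute $I=(u')^{-1}$, form $u\circ I$, and observe that the $x_0$-dependence enters only through $\mathcal{K}(x_0)$ in a way that factors out of the $b$-integral. One difference: you solve the budget constraint to pin down $\mathcal{K}(x_0)$ explicitly (so that $h(x_0)=(x_0e^{rT})^\alpha/\alpha$ in case (1) and $h(x_0)=-\gamma^{-1}e^{-\gamma x_0e^{rT}}$ in case (2)), which lets you verify condition (\ref{eq2}) directly. The paper instead leaves $h$ expressed through $\mathcal{K}(x)$ (e.g.\ $h(x)=\alpha^{-1}(\mathcal{K}(x)e^{-rT})^{\alpha/(1-\alpha)}$) and does not check (\ref{eq2}) at all, so your treatment is slightly more complete on this point.

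For case (3) the difference is more substantial. The paper simply computes
\[
\int_{\mathbb{R}}E^{P^b}[u(X^*_T)]\lambda(b)\,d\mu(b)=\log x_0+rT+\int_{\mathbb{R}}E^{P^b}[\log F(T,Y_T)]\lambda(b)\,d\mu(b)
\]
and asserts ``which shows that the decomposition is impossible,'' with no further argument. Your contradiction route---forcing $h$ to be affine in $\log x_0$, then matching coefficients to conclude $\rho$ is constant and $b\mapsto E^{P^b}[\log F(T,Y_T)]$ is $\mu$-a.s.\ constant---is a genuine argument where the paper has none. The residual work you flag (integrability of $\log F(T,Y_T)$ and non-constancy in $b$) is real but routine: under $P^b$ one has $Y_T\sim\mathcal{N}((b-r)T/\sigma,T)$, and $\log F(T,\cdot)$ has at most quadratic growth, so integrability is immediate and differentiation under the integral in $b$ is justified; non-constancy then follows whenever $\mu$ is not the point mass at $r$. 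So your plan is sound and in fact more rigorous than the paper's own proof for case (3).
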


\begin{proof}
    \begin{itemize}
        \item (1) Here, $$I(y) = y^{\frac{1}{1-\alpha}},$$ thus $$(u\circ I)(y) = \frac{1}{\alpha}y^{\frac{\alpha}{1-\alpha}},$$
        which implies that 
        \begin{align*}
\int_{\mathbb{R}}E^{P^b}\left[u(X^*_T)\right]\lambda(b)d\mu(b) &=  \int_{\mathbb{R}}E^{P^b}\left[\left(u \circ I\right)\left(\frac{\mathcal{K}(x_0)e^{-rT}}{F(T,Y(T))}\right)\right]\lambda(b)d\mu(b)\\
&=\int_{\mathbb{R}}E^{P^b}\left[\frac{1}{\alpha}
\left(\frac{\mathcal{K}(x_0)e^{-rT}}{F(T,Y(T))}\right)^{\frac{\alpha}{1-\alpha}}\right]\lambda(b)d\mu(b)\\
&=\frac{1}{\alpha}\left(\mathcal{K}(x_0)e^{-rT}\right)^{\frac{\alpha}{1-\alpha}}\int_{\mathbb{R}}E^{P^b}\left[
\left(F(T,Y(T))\right)^{\frac{\alpha}{\alpha-1}}\right]\lambda(b)d\mu(b)\\
&= h(x_0)\rho(\lambda),
        \end{align*}
        where $$h(x) = \frac{1}{\alpha}\left(\mathcal{K}(x)e^{-rT}\right)^{\frac{\alpha}{1-\alpha}} $$
        and 
        $$\rho(\lambda) = \int_{\mathbb{R}}E^{P^b}\left[
\left(F(T,Y(T))\right)^{\frac{\alpha}{\alpha-1}}\right]\lambda(b)d\mu(b).$$
        \item (2) Here, $$I(y) = \frac{-1}{\gamma}\log(y),$$ thus $$(u\circ I)(y) = \frac{-1}{\gamma}y,$$
        which implies that 
        \begin{align*}
\int_{\mathbb{R}}E^{P^b}\left[u(X^*_T)\right]\lambda(b)d\mu(b) &=  \int_{\mathbb{R}}E^{P^b}\left[\left(u \circ I\right)\left(\frac{\mathcal{K}(x_0)e^{-rT}}{F(T,Y(T))}\right)\right]\lambda(b)d\mu(b)\\
&=\int_{\mathbb{R}}E^{P^b}\left[\frac{-1}{\gamma}\left(\frac{\mathcal{K}(x_0)e^{-rT}}{F(T,Y(T))}\right)\right]\lambda(b)d\mu(b)\\
&= h(x_0)\rho(\lambda),
        \end{align*}
        where $$h(x) = \frac{-1}{\gamma}\mathcal{K}(x_0)e^{-rT} $$
        and 
        $$\rho(\lambda) = \int_{\mathbb{R}}E^{P^b}\left[
\frac{1}{F(T,Y(T))}\right]\lambda(b)d\mu(b).$$
\item (3) Here, $$I(y) = \frac{1}{y} \text{ and }\mathcal{K}(x_0) = \frac{1}{x_0},$$ thus $$(u\circ I)(y) = -\log(y),$$
        which implies that 
        \begin{align*}
\int_{\mathbb{R}}E^{P^b}\left[u(X^*_T)\right]\lambda(b)d\mu(b) &=  \int_{\mathbb{R}}E^{P^b}\left[\left(u \circ I\right)\left(\frac{\mathcal{K}(x_0)e^{-rT}}{F(T,Y(T))}\right)\right]\lambda(b)d\mu(b)\\
&=  \int_{\mathbb{R}}E^{P^b}\left[-\log\left(\frac{\mathcal{K}(x_0)e^{-rT}}{F(T,Y(T))}\right)\right]\lambda(b)d\mu(b)\\
&=x_0 + rT + \int_{\mathbb{R}}E^{P^b}\left[\log\left(F(T,Y(T))\right)\right]\lambda(b)d\mu(b),
        \end{align*}which shows that the decomposition is impossible.
    \end{itemize}
\end{proof}

\begin{remark}
    In general, if $u \circ I$ satisfies $$u \circ I\left(\frac{a}{b}\right) = f(a)g(b),$$
    then the decomposition condition holds. There are other examples satisfying this condition.
\end{remark}

Now we define a subset $ \tilde{\mathcal{A}}(x_0) \subset \mathcal{A}(x_0)$ such that for all $\pi \in \tilde{\mathcal{A}}(x_0)$, there exists corresponding function $h: \mathbb{R} \to \mathbb{R}$ and a functional $\rho: L^1 \to \mathbb{R}$ such that for the controlled terminal wealth $X_T^{\pi}$,
$\int_{\mathbb{R}}E^{P^b}\left[u(X_T^{\pi})\right]\lambda(b)d\mu(b) = h(x_0)\rho(\lambda)    
$
and $h(1) = u(e^{rT}).$
Thus for some utility functions, Problem (\ref{eq:merton-bayes}) is equivalent to 
$\sup_{\pi \in \tilde{\mathcal{A}}(x_0)}E^P\left[u(X_T)\right].$
Besides, if we define a function $\tilde{u}(x) = u(x) + C$, where $C$ is a constant that does not depend on $x$, then the problems $\sup_{\pi \in \tilde{\mathcal{A}}(x_0)}E^P\left[u(X_T)\right]$
and $\sup_{\pi \in \tilde{\mathcal{A}}(x_0)}E^P\left[\tilde{u}(X_T)\right]$
should have the same optimal solution (not the same value function), at least in the definition of the classical problem. However this is not true if we follow our current definition of admissible controls since the decomposition cannot be satisfied by these two problems at the same time.

To rescue this, we firstly define an equivalence relation for two real-valued function $f$ and $g$
\begin{equation*}
f \sim g \iff 
\begin{aligned}
    & \text{ there exists a constant }C\text{ such that } f(x) = g(x) + C.
\end{aligned}
\end{equation*}
We call $\tilde{\mathcal{A}}(x_0)$ the collection of all alternate admissible controls $\pi$, which is a subset of $\mathcal{A}(x_0)$ such that 
there exists a function $v \sim u$ ($u$ is the utility function in the objective function) such that there exist corresponding function $h: \mathbb{R} \to \mathbb{R}$ and a functional $\rho: L^1 \to \mathbb{R}$ such that for the controlled terminal wealth $X_T^{\pi}$,
$
\int_{\mathbb{R}}E^{P^b}\left[v(X_T^{\pi})\right]\lambda(b)d\mu(b) = h(x_0)\rho(\lambda)    
$
and $
    h(1) = v(e^{rT}).
$

In the DRBC formulation, since the deviation from the prior distribution and its corresponding underlying probability space is small, it is reasonable to continue to search for optimal solutions in the space of $\tilde{\mathcal{A}}(x_0)$ for those utilities. Thus we call $\tilde{\mathcal{A}}(x_0)$ the collection of all alternate admissible controls and Problem (\ref{eq:merton-inner}) becomes
\begin{equation}\label{quasinewinner}
    \sup_{\pi \in \tilde{\mathcal{A}}(x_0)} \int_{\mathbb{R}}\Phi_{\lambda,\beta}\left(E^{P^b}\left[u(X_T)\right]\right)d\mu(b).
\end{equation}

\subsection{Properties of $Q^b$}\label{transformderive}
\begin{theorem}
    Under $Q^b$ for any $b \in \mathbb
    {R}$, the distribution of $B$ is still $\mu$.
\end{theorem}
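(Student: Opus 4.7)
The plan is to compute $E^{Q^b}[f(B)]$ for an arbitrary bounded measurable $f:\mathbb{R}\to\mathbb{R}$ and show it equals $\int f\,d\mu$, which characterizes the distribution of $B$ under $Q^b$ as $\mu$. Since $\dfrac{dQ^b}{dP} = \eta^{b*}_T$ on $\mathcal{F}_T = \sigma(B)\vee\mathcal{F}^W_T$, we have
\[
E^{Q^b}[f(B)] \;=\; E^{P}\!\left[f(B)\,\eta^{b*}_T\right] \;=\; E^{P}\!\left[f(B)\,\exp\!\left(-\tfrac{B-b}{\sigma}W_T - \tfrac{(B-b)^2}{2\sigma^2}T\right)\right].
\]

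The first substantive step is to condition on $B$. Because $B$ and $W$ are independent under $P$, and $W_T\sim\mathcal{N}(0,T)$ under $P$, the tower property together with the standard Gaussian moment generating function yields, for each fixed $\beta\in\mathbb{R}$,
\[
E^{P}\!\left[\eta^{b*}_T \,\middle|\, B=\beta\right] \;=\; E^{P}\!\left[\exp\!\left(-\tfrac{\beta-b}{\sigma}W_T - \tfrac{(\beta-b)^2}{2\sigma^2}T\right)\right] \;=\; 1,
\]
since the exponent is centered Gaussian with variance $(\beta-b)^2 T/\sigma^2$ and its MGF cancels the deterministic correction. Equivalently, $\eta^{b*}$ is, conditional on $B$, an exponential martingale with mean one under $P$ (this is just Girsanov applied pathwise in $B$).

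Combining these two steps,
\[
E^{Q^b}[f(B)] \;=\; E^{P}\!\Bigl[f(B)\,E^{P}[\eta^{b*}_T\mid B]\Bigr] \;=\; E^{P}[f(B)] \;=\; \int_{\mathbb{R}} f(\beta)\,d\mu(\beta),
\]
which is exactly the statement that $B\sim\mu$ under $Q^b$.

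There is no real obstacle beyond keeping the measurability conventions straight: one must interpret $Q^b$ as the measure on the full $\mathcal{F}_T$ (not merely the restriction to $\mathcal{F}^S_T$) with density $\eta^{b*}_T$, so that $B$ is a genuine $Q^b$-random variable. Once that is fixed, the argument is simply that the Girsanov density only rescales the $W$-marginal conditional on $B$, leaving the marginal law of $B$ untouched; this is the same mechanism by which, in the change of measure to $Q^b$, $W^b = W + \tfrac{B-b}{\sigma}t$ becomes a Brownian motion independent of $B$.
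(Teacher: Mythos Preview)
Your proof is correct and follows essentially the same approach as the paper: both compute $E^{Q^b}[\mathbf{1}_{\{B\in A\}}]$ (or $E^{Q^b}[f(B)]$) via the Radon--Nikodym density, then exploit the independence of $B$ and $W$ under $P$ to reduce the inner computation to the fact that an exponential martingale has mean one. The only cosmetic difference is that the paper carries out the Gaussian integral explicitly by completing the square, whereas you invoke the moment generating function identity directly; your phrasing is slightly cleaner but the substance is identical.
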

\begin{proof}
    Let $A \in \mathcal{F}$, then from independence of $B$ and $W$, we have
    \begin{align*}
        \mu_b(A) &= Q^b(B \in A) = E^{Q^b}\left[\mathbf{1}_{\{B \in A\}}\right] \\
        &= E^P\left[\frac{dQ^b}{dP}\mathbf{1}_{\{B \in A\}}\right] = E^P\left[\exp\left(-\frac{B-b}{\sigma}W_T - \frac{(B-b)^2}{2\sigma^2}T\right)\mathbf{1}_{\{B \in A\}}\right]\\
        &= \int_{A}\int_{\mathbb{R}}\exp\left(-\frac{x-b}{\sigma}y - \frac{(x-b)^2}{2\sigma^2}T\right)\mu(x)f_{W_T}(y)dydx\\&= 
        \int_{A}\int_{\mathbb{R}}\exp\left(-\frac{x-b}{\sigma}y - \frac{(x-b)^2}{2\sigma^2}T\right)\mu(x)\frac{1}{\sqrt{T}}\varphi\left(\frac{y}{\sqrt{T}}\right)(y)dydx.
    \end{align*}
    Therefore, \begin{align*}
        \mu_b(x) &= \mu(x)\int_{\mathbb{R}}\frac{1}{\sqrt{T}}\exp\left(-\frac{y(x-b)}{\sigma} - \frac{(x-b)^2}{2\sigma^2}T\right)\varphi\left(\frac{y}{\sqrt{T}}\right)dy\\
        &= \frac{\mu(x)}{\sqrt{2\pi T}}\int_{\mathbb{R}}\exp\left(\frac{y(b-x)}{\sigma} - \frac{(x-b)^2}{2\sigma^2}T - \frac{y^2}{2T}\right)dy\\
        &= \frac{\mu(x)}{\sqrt{2\pi T}}\int_{\mathbb{R}}\exp\left(-\frac{1}{2T}\left(y^2 - \frac{2Ty(b-x)}{\sigma} + \frac{(x-b)^2}{\sigma^2} T^2\right)\right)dy\\
        &= \frac{\mu(x)}{\sqrt{2\pi T}}\int_{\mathbb{R}}\exp\left(-\frac{1}{2T}\left(y-\frac{b-x}{\sigma}T\right)^2\right)dy\\
        &= \mu(x).
        % &= \frac{\mu(x)}{\sqrt{2\pi T}}\int_{\mathbb{R}}\exp\left(-\frac{y(x-b)}{\sigma} - \frac{(x-b)^2}{2\sigma^2}T - \frac{y^2}{2T}\right)dy\\
        % &= \frac{\mu(x)}{\sqrt{2\pi T}}\int_{\mathbb{R}}\exp\left(-\frac{1}{2T}\left(y^2 + \frac{2T(x-b)}{\sigma}y\right)-\frac{(x-b)^2}{\sigma}T\right)dy\\
        % &= \frac{\mu(x)}{\sqrt{2\pi T}}\exp\left(-\frac{(x-b)^2}{2\sigma^2}T\right)\int_{\mathbb{R}}\exp\left(-\frac{1}{2T}\left(y + T\frac{(x-b)^2}{\sigma}\right)^2\right)dy\\
        % &= \mu(x)\exp\left(-\frac{(x-b)^2}{2\sigma^2}T\right).
    \end{align*}
    % If $\mu \sim \mathcal{N}\left(\mu_0, \sigma_0^2\right)$, then 
    % \begin{align*}
    %    \mu_b(x) &= \frac{1}{\sqrt{2\pi}\sigma_0}\exp\left(-\frac{1}{2\sigma_0}\left(x-\mu_0\right)^2\right)\exp\left(-\frac{(x-b)^2}{2\sigma^2}T\right)\\
    %    &= \frac{1}{\sqrt{2\pi}\sigma_0}\exp\left(-\frac{(x-\mu_0)^2}{2\sigma_0^2}-\frac{(x-b)^2}{2\sigma^2}T\right)\\
    %    &= \frac{1}{\sqrt{2\pi} \sigma_b}\exp\left(-\frac{1}{2\sigma_b^2}\left(x-\mu_b\right)^2\right),
    % \end{align*}where $\mu_b = \frac{T\sigma_0^2 b + \sigma^2 \mu_0}{T\sigma_0^2 + \sigma^2}$ and $\sigma_b = \frac{\sigma_0^2 \sigma^2}{T\sigma_0^2 + \sigma^2}$, which finishes the proof.
\end{proof}

\subsection{Proof of Theorem \ref{system}}
\begin{proof}
From Corollary 3.8 in \citet{GLS22}, if $X_T$ is a terminal wealth such that there exists a constant $\kappa > 0$ with
\begin{equation}\label{explode}
\left\{
\begin{aligned}
& u'(X_T) = \frac{\kappa \eta_T \int_\mathbb{R} \Phi_{\lambda, \beta}' \left( E^{Q^b} \left[ u(X_T) \right] \right) d\mu(b)}{\int_{\mathbb{R}} \Phi_{\lambda, \beta}' \left( E^{Q^b} \left[ u(X_T) \right] \right) \eta_T^b \, d\mu(b)}\\
& E^{Q^{r}} \left[ X_T  \right] = x_0 e^{r T},
\end{aligned}
\right.
\end{equation}
    then $X_T$ is the optimal terminal wealth corresponding to Problem (\ref{eq:merton-inner}).
Since $u$ and $\Phi_{\lambda, \beta}$ are both strictly increasing and strictly concave, then $u' > 0$ and $\Phi_{\lambda, \beta}' > 0$, hence the first equation is equivalent to (we abuse the notation by using $\kappa$ for both the positive multiplier and its logarithm) \begin{align*}
    \log\left(u'(X_T)\right) &= \kappa + \log(\eta_T) + \log\left(\int_{\mathbb{R}}\Phi_{\lambda, \beta}' \left( E^{Q^b} \left[ u(X_T) \right] \right) d\mu(b)\right) \\
    &-\log\left(\int_{\mathbb{R}} \Phi_{\lambda, \beta}' \left( E^{Q^b} \left[ u(X_T) \right] \right) \eta_T^b \, d\mu(b)\right),
\end{align*}
where $\log(\eta_T) = \frac{r-b_0}{\sigma}\left(W_T + \frac{B-b_0}{\sigma}T\right) - \frac{(b_0-r)^2}{2\sigma^2}T$ and $\log\left(\Phi_{\lambda, \beta}' \left( E^{Q^b} \left[ u(X_T) \right] \right) d\mu(b)\right)$ does not involve an exponential term so it will not explode. Therefore it suffices to simplify $\log\left(\int_{\mathbb{R}} \Phi_{\lambda, \beta}' \left( E^{Q^b} \left[ u(X_T) \right] \right) \eta_T^b \, d\mu(b)\right)$.

% We notice that $\log\left(\int_{\mathbb{R}} \Phi_{\lambda, \beta}' \left( E^{Q^b} \left[ u(X_T) \right] \right) \eta_T^b \, d\mu(b)\right)$ is indeed a random variable where the randomness is from the $B$ and $W_T$, and the integral can be seen as taken over another random variable $A$ (since $b$ is dummy) which is independent of $B$ and $W_T$, but $A$ has the same distribution as $B$ under $P$. In other words, 
Suppose $A: \Omega \to \mathbb{R}$ is a random variable that is independent from $B$ and $W$ with distribution $\mu$, and if we denote the $\sigma$-algebra generated by $B$ and $W_T$ as $\mathcal{G}$, then
\begin{align*}
   &\int_{\mathbb{R}} \Phi_{\lambda, \beta}' \left( E^{Q^b} \left[ u(X_T) \right] \right) \eta_T^b \, d\mu(b)\\ &= \int_{\mathbb{R}} \Phi_{\lambda, \beta}' \left( E^{Q^a} \left[ u(X_T) \right] \right) \exp\left(-\frac{b_0-a}{\sigma}\left(W_T + \frac{B-b_0}{\sigma}T \right)-\frac{(b_0-a)^2}{2\sigma^2}T\right) \, d\mu(a)\\
   &= E^P\left[\Phi_{\lambda, \beta}' \left( E^{Q^A} \left[ u(X_T) \right] \right)\exp\left(-\frac{b_0-A}{\sigma}\left(W_T + \frac{B-b_0}{\sigma}T \right)-\frac{(b_0-A)^2}{2\sigma^2}T\right) \,\middle\vert\, \mathcal{G}\right]\\
   &= E^Q\left[\Phi_{\lambda, \beta}' \left( E^{Q^A} \left[ u(X_T) \right] \right)\,\middle\vert\, \mathcal{G} \right]
   E^P\left[\exp\left(-\frac{b_0-A}{\sigma}\left(W_T + \frac{B-b_0}{\sigma}T \right)-\frac{(b_0-A)^2}{2\sigma^2}T\right) \,\middle\vert\, \mathcal{G} \right],
\end{align*}
where the last equality is from abstract Bayes' rule \citep{absBayes} with the Radon-Nikodym derivative 
$$\frac{dQ}{dP} = \frac{\exp\left(-\frac{b_0-A}{\sigma}\left(W_T + \frac{B-b_0}{\sigma}T \right)-\frac{(b_0-A)^2}{2\sigma^2}T\right)}{E^P\left[\exp\left(-\frac{b_0-A}{\sigma}\left(W_T + \frac{B-b_0}{\sigma}T \right)-\frac{(b_0-A)^2}{2\sigma^2}T\right) \,\middle\vert\, \mathcal{G} \right]}.$$
% For the second term, since $A \sim \mu \sim \mathcal{N}\left(\mu_0, \sigma_0^2\right)$ and $A$, $B$, $W_T$ are mutually independent under $P$, we denote $D = W_T + \frac{B - b_0}{\sigma} T$ for convenience of notations, then 
% \begin{align*}
% I: &= E^P\left[\exp\left(-\frac{b_0-A}{\sigma}D-\frac{(b_0-A)^2}{2\sigma^2}T\right) \,\middle\vert\, \mathcal{G} \right]\\
% &= \exp\left(
%     -C_0 + \frac{C_1^2 \sigma_0^2}{2} - C_2 \mu_0^2
% \right)
% \cdot
% \frac{1}{\sqrt{1 + 2 C_2 \sigma_0^2}},
% \end{align*}
% where 
% $$C_0 = \frac{b_0}{\sigma} \left( W_T + \frac{B - b_0 T}{\sigma} \right) + \frac{b_0^2 T}{2 \sigma^2},$$
% $$C_1 = -\frac{1}{\sigma} \left( W_T + \frac{B - b_0 T}{\sigma} \right) + \frac{b_0 T}{\sigma^2},$$and
% $$C_2 = \frac{T}{2 \sigma^2}.$$
For the first term, we need to know the distribution of $A$ under $Q$ conditioned on $\mathcal{G}$. That is, for any Borel measurable set $E \subset \mathbb{R}$, from the abstract Bayes' rule, we want to compute
\begin{align*}
    Q\left(A \in E\,\middle|\, \mathcal{G}\right) &= E^Q\left[\mathbf{1}_{\{A \in E\}}\,\middle|\, \mathcal{G}\right]= \frac{E^P\left[ \dfrac{dQ}{dP} \mathbf{1}_{\{A \in E\}} \,\middle|\, \mathcal{G} \right]}{E^P\left[ \dfrac{dQ}{dP} \,\middle|\, \mathcal{G} \right]}\\
     &= \frac{ \displaystyle E^P\left[ \exp\left( -\dfrac{b_0 - A}{\sigma} D - \dfrac{(b_0 - A)^2}{2 \sigma^2} T \right) \mathbf{1}_{\{ A \in E \}} \, \middle| \, \mathcal{G} \right] }{ \displaystyle E^P\left[ \exp\left( -\dfrac{b_0 - A}{\sigma} D - \dfrac{(b_0 - A)^2}{2 \sigma^2} T \right) \, \middle| \, \mathcal{G} \right] }.
\end{align*}
Moreover, if $D = W_T + \frac{B-b_0}{\sigma}T$, then
\begin{align*}
\text{Numerator} &= \int_E \exp\left( -\dfrac{b_0 - a}{\sigma} D - \dfrac{(b_0 - a)^2}{2 \sigma^2} T \right) \cdot \frac{1}{\sqrt{2 \pi \sigma_0^2}} \exp\left( -\dfrac{(a - \mu_0)^2}{2 \sigma_0^2} \right) \, da, \\
\text{Denominator} &= \int_{-\infty}^{\infty} \exp\left( -\dfrac{b_0 - a}{\sigma} D - \dfrac{(b_0 - a)^2}{2 \sigma^2} T \right) \cdot \frac{1}{\sqrt{2 \pi \sigma_0^2}} \exp\left( -\dfrac{(a - \mu_0)^2}{2 \sigma_0^2} \right) \, da.
\end{align*}
If we define $K_1 = \dfrac{T}{2 \sigma^2} + \dfrac{1}{2 \sigma_0^2}, $
$K_2 = \dfrac{T b_0}{\sigma^2} + \dfrac{D}{\sigma} + \dfrac{\mu_0}{\sigma_0^2},$
$K_3 = \dfrac{T b_0^2}{2 \sigma^2} + \dfrac{b_0 D}{\sigma} + \dfrac{\mu_0^2}{2 \sigma_0^2},$ and $ C = \dfrac{K_2^2}{4 K_1} - K_3,$ then
\begin{align*}
  Q\left(A \in E\,\middle|\, \mathcal{G}\right) &= \frac{ \exp(C) \displaystyle \int_E \exp\left( -K_1 \left( a - \dfrac{K_2}{2 K_1} \right )^2 \right ) da }{ \exp(C) \int_{-\infty}^{\infty} \exp\left( -K_1 \left( a - \dfrac{K_2}{2 K_1} \right )^2 \right ) da}\\
  &= \frac{ \exp(C) \displaystyle \int_E \exp\left( -K_1 \left( a - \dfrac{K_2}{2 K_1} \right )^2 \right ) da }{ \exp(C) \sqrt{ \dfrac{\pi}{K_1} } } \\
&= \int_E \dfrac{1}{ \sqrt{ \pi / K_1 } } \exp\left( -K_1 \left( a - \dfrac{K_2}{2 K_1} \right )^2 \right ) da \\
&= \int_E \dfrac{1}{ \sqrt{2 \pi \sigma_Q^2} } \exp\left( -\dfrac{ (a - \mu_Q)^2 }{ 2 \sigma_Q^2 } \right ) da,  
\end{align*}
where $\mu_Q = \dfrac{K_2}{2 K_1} = \sigma_Q^2 \left( \dfrac{T b_0}{\sigma^2} + \dfrac{D}{\sigma} + \dfrac{\mu_0}{\sigma_0^2} \right ) = \sigma_Q^2 \left( \dfrac{W_T}{\sigma} + \dfrac{B T}{\sigma^2} + \dfrac{\mu_0}{\sigma_0^2} \right )$ and $\sigma_Q^2 = \dfrac{1}{2 K_1} = \left( \dfrac{T}{\sigma^2} + \dfrac{1}{\sigma_0^2} \right )^{-1}.$
This computation also gives the second term
$$E^P\left[ \exp\left( -\dfrac{b_0 - A}{\sigma} D - \dfrac{(b_0 - A)^2}{2 \sigma^2} T \right) \, \middle| \, \mathcal{G} \right] = \exp(C) \frac{\sigma_Q}{\sigma_0}.$$
Thus, under \( Q \) conditioned on \( \mathcal{G} \), \( A \) follows a Gaussian distribution $\mathcal{N}\left( \mu_Q, \sigma_Q^2 \right ) \sim \mu_A$.
Therefore,
\begin{align*}
    \text{log term }&=\log\left(\int_{\mathbb{R}} \Phi_{\lambda, \beta}' \left( E^{Q^b} \left[ u(X_T) \right] \right) \eta_T^b \, d\mu(b)\right)\\ 
%     &= \log\left(E^Q\left[\Phi_{\lambda, \beta}' \left( E^{Q^A} \left[ u(X_T) \right] \right)\right]\right) + \log\left(\exp\left(
%     -C_0 + \frac{C_1^2 \sigma_0^2}{2} - C_2 \mu_0^2
% \right)
% \cdot
% \frac{1}{\sqrt{1 + 2 C_2 \sigma_0^2}}\right)\\
% &=\log\left(\int_{\mathbb{R}}\Phi_{\lambda, \beta}'\left(E^{Q^a}[u(X_T)]\right)d\mu_A(a)\right) -C_0 + \frac{C_1^2 \sigma_0^2}{2} - C_2 \mu_0^2- \frac{1}{2}\log\left(1 + 2C_2\sigma_0^2\right).
&= \log\left(\exp(C) \frac{\sigma_Q}{\sigma_0}\right) + \log\left(\int_{\mathbb{R}}\Phi_{\lambda, \beta}'\left(E^{Q^a}[u(X_T)]\right)d\mu_A(a)\right)\\
&= C + \log\left(\sigma_Q\right) - \log\left(\sigma_0\right) + \log\left(\int_{\mathbb{R}}\Phi_{\lambda, \beta}'\left(E^{Q^a}[u(X_T)]\right)d\mu_A(a)\right).
\end{align*}
Hence, if $X_T$ is a terminal wealth such that there exists a constant $\kappa > 0$ with $E^{Q^{r}} \left[ X_T  \right] = x_0 e^{r T}$ and 
\begin{align*}
    \log\left(u'(X_T)\right) &= \kappa + \frac{r-b_0}{\sigma}\left(W_T + \frac{B-b_0}{\sigma}T\right) - \frac{(b_0-r)^2}{2\sigma^2}T \\
    &+ \log\left(\int_{\mathbb{R}}\Phi_{\lambda, \beta}' \left( E^{Q^b} \left[ u(X_T) \right] \right) d\mu(b)\right)\\ &- \log\left(\int_{\mathbb{R}} \Phi_{\lambda, \beta}' \left( E^{Q^b} \left[ u(X_T) \right] \right) \eta_T^b \, d\mu(b)\right)\\
    &=\log(\kappa) + \frac{r-b_0}{\sigma}\left(W_T + \frac{B-b_0}{\sigma}T \right)- \frac{(b_0-r)^2}{2\sigma^2}T \\
    &+ \log\left(\int_{\mathbb{R}}\Phi_{\lambda, \beta}' \left( E^{Q^b} \left[ u(X_T) \right] \right) d\mu(b)\right)\\&-\log\left(\int_{\mathbb{R}}\Phi_{\lambda, \beta}'\left(E^{Q^a}[u(X_T)]\right)d\mu_A(a)\right) -C - \log\left(\sigma_Q\right) +\log\left(\sigma_0\right),
\end{align*}then $X_T$ is the optimal terminal wealth corresponding to Problem (\ref{eq:merton-inner}). If we define \begin{align*}
L_{\kappa}(X_T) &=
\kappa - \log\left(u'(X_T)\right) +\log\left(\int_{\mathbb{R}}\Phi_{\lambda, \beta}' \left( E^{Q^b} \left[ u(X_T) \right] \right) d\mu(b)\right)\\
&-\log\left(\int_{\mathbb{R}}\Phi_{\lambda, \beta}'\left(E^{Q^a}[u(X_T)]\right)d\mu_A(a)\right)\notag\\
&\quad + K_3 - \frac{K_2^2}{4K_1} - \log\left(\sigma_Q\right) +\log\left(\sigma_0\right)+ \frac{r-b_0}{\sigma}\left(W_T + \frac{B-b_0}{\sigma}T\right) - \frac{(b_0-r)^2}{2\sigma^2}T,\notag
\end{align*}then $L_{\kappa}(X_T) = 0$.
\end{proof}

\section{Algorithms}\label{algo}
\subsection{Implementation Details for the Merton Problem}
Here we elaborate on the details of simulating a sample of the optimal terminal wealth $X_T$. Recall that when $d=1$, the controlled SDE is given by
\begin{equation}\label{SDE1}
d X_t = (X_t - \pi_t) r \, d t + \pi_t \left(B d t + \sigma d W_t\right),    
\end{equation}
where $\pi_t$ is the amount of money invested in the stock at time $t \in [0,T]$. Theorem \ref{Karatzassolution} provides the optimal fraction of total wealth invested in stock at time $t$. If we model $\pi_t$ as the fraction of total wealth invested in stock at time $t$, then the controlled SDE becomes 
\begin{equation}\label{SDE2}
  d X_t = X_t \left(rdt + \pi_t\left(B-r\right)dt + \sigma dW_t\right).  
\end{equation}These two formulations are equivalent since they give the same optimal value function (thus the same optimal terminal wealth) and the optimal fraction for the formulation (\ref{SDE1}) indeed gives the optimal control for the formulation (\ref{SDE2}). Thus, in the implementations, when we sample $X_T$ for the KL divergence case, we plug in the optimal fraction for Equation (\ref{SDE2}) and use either Euler's method or rMLMC method \citep{RheeGlynn2015}.

\subsection{Deep Learning Method to Learn Optimal Terminal Wealth for the General Case}\label{algo3}
In this section, we discuss and give the Algorithm \ref{first} to compute the loss function $\mathcal{L}(\boldsymbol{\theta})$ with fixed neural network parameter (get high quality estimates (e.g. unbiased, low variance, and fast computing speed)), and then auto-differentiation (back propagation neural network \citep{rumelhart1986learning, lecun2015deep}) can be used to do the optimizations (Algorithm \ref{BPG}). 

Before discussing the loss function (\ref{loss}), we remark that Equation (\ref{explode}) also motivates a loss function for $b_1 \in \mathbb{R}$,
\begin{align}\label{badloss}
&\mathcal{\tilde{L}}(\boldsymbol{\theta}) =E^{Q^{b_1}} \bigg[\bigg\|
u'(h_{\theta}(W_T,B))\int_{\mathbb{R}} \Phi_{\lambda, \beta}' \left( E^{Q^a} \left[ u(X_T) \right] \right)* \\
&\exp\left(-\frac{b_0-a}{\sigma}\left(W_T + \frac{B-b_0}{\sigma}T \right)-\frac{(b_0-a)^2}{2\sigma^2}T\right) \, d\mu(a) \notag\\ 
&- \kappa \exp\left(\frac{r-b_0}{\sigma}\left(W_T + \frac{B-b_0}{\sigma}T\right) - \frac{(b_0-r)^2}{2\sigma^2}T\right)\int_{\mathbb{R}}\Phi_{\lambda, \beta}' \left( E^{Q^b} \left[ u(h_{\theta}(W_T,B)) \right] \right) d\mu(b)
\bigg\|_2^2\bigg] \notag\\
&\quad + \Bigg( E^{Q^{r}}[h_{\theta}(W_T,B)] - x_0 e^{rT} \Bigg)^2. \notag
\end{align}
In theory, loss (\ref{loss}) and (\ref{badloss}) are equivalent. However, in terms of numerical computation, (\ref{badloss}) is bad since when we initializing neural network parameters and do the optimization steps, we cannot control the flow and with high probability, the terms in (\ref{badloss}) becomes $\infty$ since $\frac{1}{\sigma^2}$ is small. Moreover, the scalar learnable parameter in (\ref{badloss}) is restricted to be positive so we need to do projected gradient descent, while in (\ref{loss}) the scalar learnable parameter can be any real number. Therefore, in order to achieve the numerical stability and convenience for implementation, we choose loss (\ref{loss}).

% The main challenges in this computation are to compute $\int_{\mathbb{R}} \phi\left(E^{Q^b}\left[u(h_{\theta}(W_T, B))\right]\right) d\mu(b)$ and $\int_{\mathbb{R}} \phi\left(E^{Q^a}\left[u(h_{\theta}(W_T, B))\right]\right) d\mu_A(a)$ (with realizations of $W_T$ and $B$), which motivates Algorithm \ref{first}.

Recall that from Theorem \ref{system}, if we replace $\Phi_{\lambda, \beta}$ by $\phi$, then the conditions that the parametrized optimal terminal wealth $h_{\theta}(W_T,B)$ needs to satisfy are
$$E^{Q^{r}}[h_{\theta}(W_T,B)] = x_0 e^{rT}$$ and
\begin{align}
\log\left(u'(h_{\theta}(W_T,B))\right)  &=  \log\left(\int_{\mathbb{R}}\phi' \left( E^{Q^b} \left[ u(h_{\theta}(W_T,B)) \right] \right) d\mu(b)\right) \text{        (constant log term)}\label{1}\\ &-\log\left(\int_{\mathbb{R}}\phi'\left(E^{Q^a}[u(h_{\theta}(W_T,B))]\right)d\mu_A(a)\right)\text{ (random log term)}\label{2}\\
&\quad+ \kappa + K_3 - \frac{K_2^2}{4K_1} - \log\left(\sigma_Q\right) +\log\left(\sigma_0\right) \\
&+\frac{r-b_0}{\sigma}\left(W_T + \frac{B-b_0}{\sigma}T\right) - \frac{(b_0-r)^2}{2\sigma^2}T,\label{3}
\end{align}where the second condition is an equation of random variables, thus for each $b_1$ in the support of $\mu$, we need to sample $W_T$ and $B$ under the probability measure $Q^{b_1}$. From Appendix \ref{transformderive}, under $Q^{b_1}$, the distribution of $B$ is unchanged. Moreover, since under $Q^{b_1}$, $W^{b_1}$ is a standard Brownian motion and $W^{b_1}_t = W_t + \frac{B-b_1}{\sigma}t$, then we get samples of $W_T$ by first sampling $N \sim \mathcal{N}(0,T)$, $B \sim \mu$, and then compute $W_T = N - \frac{B-b_1}{\sigma}T$. The reason to do this can be found in Section \ref{algo4}.

% We choose the probability measure $Q^r$ with the following intuitions and reasons. To begin with, since the other part of the loss function requires samples under $Q^r$, using the same probability space reduces the cost of simulation. On the other hand, the probability measure $Q^r$ represents the risk-neutral measure in financial mathematics \citep{KaratzasShreve1998}, thus is a natural choice. Finally, choosing $Q^r$ is beneficial for potential downstream tasks. For example, $\{e^{-rt} \hat{X}_t \}_{0 \leq t \leq T}$ is a martingale under $Q^{r}$, then we can numerically compute the optimal wealth process with the learned optimal terminal wealth and the conditional expectations. Therefore, the loss function is defined by Equation (\ref{loss}).

Once we sample from $W_T$ and $B$ under $Q^{b_1}$, we need to compute the nested expectations  (\ref{1}) and (\ref{2}) since (\ref{3}) is easy to compute. Essentially, computations of (\ref{1}) and (\ref{2}) are the same, except the outermost distribution in (\ref{1}) is deterministic, where the outermost distribution in (\ref{2}) depends on the sampling of $W_T$ and $B$ under $Q^{b_1}$. We can view the whole loss function as a nested expectation with layers in $x \mapsto x^2$ and $x \mapsto \log(x)$ and then apply the method in \citet{YG23}, but then the regularity conditions are hard to check. Thus, we only consider the rMLMC method for the nonlinear transform $\phi'$. For the rest of the estimator, we use the plug-in method.

\begin{algorithm}
\caption{Loss Estimator with rMLMC}\label{first}
\textbf{Input:} Functions $\phi$ and $u$, scalar parameters $(x_0, b_0, T, \mu_0, \sigma_0, r,\sigma)$, fixed parameters $\kappa$ and $\theta$, data sets $\{W_k^{(1)}\}_{1 \leq k \leq n^{(1)}}$, $\{B_k^{(1)}\}_{1 \leq k \leq n^{(1)}}$, $\{W_{i,j}^{(3)}\}_{1 \leq j \leq n^{(2)}, 1 \leq i \leq 2^{N+1}, }$, $\{B_{i,j}^{(3)}\}_{1 \leq j \leq n^{(2)}, 1 \leq i \leq 2^{N+1}}$, $\{W_{i,j,k}^{(3)}\}_{1 \leq j \leq n^{(2)}, 1 \leq i \leq 2^{N+1}, 1 \leq k \leq n^{(1)}}$, and $\{B_{i,j,k}^{(3)}\}_{1 \leq j \leq n^{(2)}, 1 \leq i \leq 2^{N+1}, 1 \leq k \leq n^{(1)}}$, sample $N$.\\
\textbf{Output:} Estimate of $\mathcal{L}(\boldsymbol{\theta}) = \mathcal{L}\left(\theta, \kappa\right)$.
\begin{itemize}
    \item \textbf{Compute the constant log term:}
 \textbf{For each $j = 1, \ldots, n^{(2)}$}
\begin{itemize}
    % \item Initialize $Z_j = 0$.
    \item For each $i = 1, \ldots, 2^{N + 1}$, define $Y_{i,j} = \left(W^{(3)}_{i,j}, B^{(3)}_{i,j}\right)^T$ and compute $X_{i,j} = u\left(h_{\theta}\left(Y_{i,j}\right)\right)$.
    \item Split the sequence of $X_{i,j}$ into odd and even indices for $i$. Compute
    $S_{j,l} = \sum_{i=1}^l X_{i,j}$, $S^O_{j,l} = \sum_{i=1}^k X^O_{i,j}$, and $S^E_{j,l} = \sum_{i=1}^l X^E_{i,j}$.
    \item Compute
    $
    \Delta_N = \phi'\left(\frac{S_{j,2^{N+1}}}{2^{N+1}}\right) - \frac{1}{2}\left(\phi'\left(\frac{S^O_{j,2^N}}{2^N}\right) + \phi'\left(\frac{S^E_{j,2^N}}{2^N}\right)\right).
    $
    \item Compute $Z_j = \frac{\Delta_N}{p(N)} + \phi'\left(\frac{S_{j, 2^{n_0}}}{2^{n_0}}\right),$
    where $p(N)$ is the probability mass function of $N$.
\end{itemize}\textbf{end for }Compute $\hat{I}_c = \log\left(\frac{1}{n^{(2)}} \sum_{j=1}^{n^{(2)}}Z_j\right).$

\item Compute $D = W^{(1)}_k + \frac{B^{(1)}_k-b_0}{\sigma}T$, $K_1 = \dfrac{T}{2 \sigma^2} + \dfrac{1}{2 \sigma_0^2},$, $K_2 = \dfrac{T b_0}{\sigma^2} + \dfrac{D}{\sigma} + \dfrac{\mu_0}{\sigma_0^2}$, and $K_3 = \dfrac{T b_0^2}{2 \sigma^2} + \dfrac{b_0 D}{\sigma} + \dfrac{\mu_0^2}{2 \sigma_0^2}$.

\item Compute $\hat{I}_{r_1} = \frac{r-b_0}{\sigma}\left(W^{(1)}_k + \frac{B^{(1)}_k-b_0}{\sigma}T\right) - \frac{(b_0-r)^2}{2\sigma^2}T + K_3 - \frac{K_2^2}{4K_1} - \log\left(\sigma_Q\right) +\log\left(\sigma_0\right).$
\item \textbf{Compute the random log term: }\textbf{For each $k = 1, \ldots, n^{(1)}$}\begin{itemize}
    \item 
 \textbf{For each $j = 1, \ldots, n^{(2)}$}
\begin{itemize}
    % \item Initialize $Z_j = 0$.
    \item For each $i = 1, \ldots, 2^{N + 1}$, define $Y_{i,j,k} = \left(W^{(3)}_{i,j,k}, B^{(3)}_{i,j,k}\right)^T$ and compute $X_{i,j,k} = u\left(h_{\theta}\left(Y_{i,j,k}\right)\right)$.
    \item Split the sequence of $X_{i,j,k}$ into odd and even indices for $i$. Compute
    $S_{j,l,k} = \sum_{i=1}^l X_{i,j,k}$, $S^O_{j,l,k} = \sum_{i=1}^k X^O_{i,j,k}$, and $S^E_{j,l,k} = \sum_{i=1}^l X^E_{i,j,k}$.
    \item Compute
    $
    \Delta_{N,k} = \phi'\left(\frac{S_{j,2^{N+1},k}}{2^{N+1}}\right) - \frac{1}{2}\left(\phi'\left(\frac{S^O_{j,2^N,k}}{2^N}\right) + \phi'\left(\frac{S^E_{j,2^N,k}}{2^N}\right)\right)
    $ and $Z_{j,k} = \frac{\Delta_{N,k}}{p(N)} + \phi'\left(\frac{S_{j,2^{n_0},k}}{2^{n_0}}\right).$
\end{itemize}\textbf{end for }Compute $\hat{I}_{r_2} = \log\left(\frac{1}{n^{(2)}} \sum_{j=1}^{n^{(2)}}Z_{j,k}\right)$. \textbf{end for }
\end{itemize}
\item Compute $\mathcal{L}_2 = \left(\frac{1}{n^{(1)}}\sum_{k=1}^{n^{(1)}}h_{\theta}\left(W^{(1)}_k, B^{(1)}_k\right) - x_0e^{rT}\right)^2$.
\end{itemize}
\textbf{Return:} 
% \begin{align*}
%     &\mathcal{L}\left(\boldsymbol{\theta}\right) =\\
%     & = \frac{1}{n^{(1)}}\sum_{k=1}^{n^{(1)}}\left(\kappa - \log\left(u'\left(h_{\theta}\left(W^{(1)}_k, B^{(1)}_k\right)\right)\right) + \hat{I}_{r_1} - \hat{I}_{r_2} + \hat{I}_{c}\right)^2 + .
% \end{align*}
$\mathcal{L}\left(\boldsymbol{\theta}\right) = \frac{1}{n^{(1)}}\sum_{k=1}^{n^{(1)}}\left(\kappa - \log\left(u'\left(h_{\theta}\left(W^{(1)}_k, B^{(1)}_k\right)\right)\right) + \hat{I}_{r_1} - \hat{I}_{r_2} + \hat{I}_{c}\right)^2 + \mathcal{L}_2.$ 
\end{algorithm}

\begin{algorithm}\label{BPG}
\caption{rMLMC DR Policy Learning Step for the General Case}\label{BPG}
\textbf{Input:} Functions $\phi$ and $u$, scalar parameters $(x_0, b_0, T, \mu_0, \sigma_0, r,\sigma)$, step-size sequence $\{\alpha_k\}_{k \in \mathbb{Z}_{\geq 0}}$, initilizations $\boldsymbol{\theta} = (\theta, \kappa)^T$ and $k = 0$.\\
\textbf{Output:} DR optimal terminal wealth $X_T$.\\ 
\textbf{Samples for constant log term:} sample $\tilde{N} \sim \text{Geo}(R)$ and compute $N = \tilde{N} + n_0$; sample $n^{(2)}$ i.i.d. values $\{B^{(2)}_j\}_{1 \leq j \leq n^{(2)}}$ from $\mathcal{N}(\mu_0, \sigma_0^2)$.  \textbf{For each $j = 1, \ldots, n^{(2)}$}\begin{itemize}
    % \item Initialize $Z_j = 0$.
    \item Sample $2^{N + 1}$ i.i.d. samples $\{B^{(3)}_{i,j}\}_{1 \leq i \leq 2^{N + 1}}$ from $\mathcal{N}(\mu_0, \sigma_0^2)$. 
    \item Sample $2^{N + 1}$ i.i.d. samples $\{N^{(3)}_{i,j}\}_{1 \leq i \leq 2^{N + 1}}$ from $\mathcal{N}(0,T)$. 
     \item For each $i = 1, \ldots, 2^{N + 1}$, compute $W^{(3)}_{i,j} = N^{(3)}_{i,j} - \frac{B^{(3)}_{i,j}-B^{(2)}_j}{\sigma}T$.
    \end{itemize}
Sample $n^{(1)}$ i.i.d. values $\{B^{(1)}_k\}_{1 \leq k \leq n^{(1)}}$ from $\mathcal{N}(\mu_0, \sigma_0^2)$ and $n^{(1)}$ i.i.d. values $\{N^{(1)}_k\}_{1 \leq k \leq n^{(1)}}$ from $\mathcal{N}(0, T)$. For each $k = 1, \ldots, n^{(1)}$, compute $W^{(1)}_k = N^{(1)}_k - \frac{B^{(1)}_k-r}{\sigma}T$. Compute $\sigma_Q^2 = \left( \dfrac{T}{\sigma^2} + \dfrac{1}{\sigma_0^2} \right )^{-1}$.\\
\textbf{Samples for random log term:}\textbf{ For each $k = 1, \ldots, n^{(1)}$}\begin{itemize}
    \item Compute $\mu_{Q} = \sigma_Q^2 \left( \dfrac{W^{(1)}_k}{\sigma} + \dfrac{B^{(1)}_k T}{\sigma^2} + \dfrac{\mu_0}{\sigma_0^2} \right )$. Sample $n^{(2)}$ i.i.d. values $\{B^{(2)}_{j,k}\}_{1 \leq j \leq n^{(2)}}$ from $\mathcal{N}(\mu_Q, \sigma_Q^2)$.
 \textbf{For each $j = 1, \ldots, n^{(2)}$}
\begin{itemize}
    % \item Initialize $Z_j = 0$.
    \item Sample $2^{N + 1}$ i.i.d. samples $\{B^{(3)}_{i,j,k}\}_{1 \leq i \leq 2^{N + 1}}$ from $\mathcal{N}(\mu_0, \sigma_0^2)$. 
    \item Sample $2^{N + 1}$ i.i.d. samples $\{N^{(3)}_{i,j,k}\}_{1 \leq i \leq 2^{N + 1}}$ from $\mathcal{N}(0,T)$. 
    \item For each $i = 1, \ldots, 2^{N + 1}$, compute $W^{(3)}_{i,j,k} = N^{(3)}_{i,j,k} - \frac{B^{(3)}_{i,j,k}-B^{(2)}_{j,k}}{\sigma}T$.
    \end{itemize}
    \end{itemize}
\textbf{repeat}
\begin{itemize}
\item Compute $\mathcal{L}\left(\boldsymbol{\theta}\right)$ by Algorithm \ref{first} and the above samples.
\item Update $\boldsymbol{\theta} = \boldsymbol{\theta} - \alpha_k \nabla_{\boldsymbol{\theta}} \mathcal{L}\left(\boldsymbol{\theta}\right)$, where the gradient is computed by back propagation; update $k = k+1$.
\end{itemize}
\textbf{until} $\boldsymbol{\theta} = (\theta,\kappa)^T$ converges to $\boldsymbol{\theta}^* = (\theta^*,\kappa^*)^T$.\\
\textbf{Return }$X_T = h_{\theta^*}(W_T,B)$.
\end{algorithm}

\subsection{Deep Learning Method to Learn Optimal Policy for the General Case}\label{algo4}
Suppose we have numerically computed the approximation of the optimal terminal wealth $X^*_T \approx h_{\theta^*}(W_T,B)$, then for the policy evaluation step (Section \ref{sec:duality}), the simulation is simpler than the KL case (since we can directly sample unbiased terminal wealth directly without simulating an SDE).
However, in order to use the rMLMC estimator for $Q_{\text{DRBC}}$, we still need to draw $\{b_1, b_2, \ldots, b_n\}$ from the distribution of $\mu$ and generate an unbiased estimator of $$E^{P^{b_i}}\left[u(X_T)\right] = E^{Q^{b_i}}\left[u(X_T)\right] = E^{Q^{b_i}}\left[u(h_{\theta^*}(W_T,B))\right].$$
Thus, it is better to design a loss function for each $b_i$ rather than choose a "uniformly" best $Q^{b^*}$ for the loss function.

    Suppose the multipliers of the alternative optimization algorithm converge finally, then we get realizations of $\hat{V}(x_0)$ in the worst case. Since the optimal value function of the distributionally robust problem is just plugging the worst case probability $\mu^*$ into the original form of the solution (Theorem \ref{Karatzassolution}), thus we can solve the optimization problem $\inf_{\theta}\mathcal{L}(\theta)$ to derive $\mu^*$ if we parametrize the density by a neural network: 
    \begin{align*}
        \mathcal{L}(\theta)
      &= E^P\left[\left\Vert \hat{V}(x_0) - V(x_0, f_{\theta}) \right\Vert_2^2\right],
    \end{align*}
    where $\theta$ represents the parameter of the neural network $f_{\theta}$ for the approximation of $\rho_{\mu^{*}}$ and $V(x_0, \mu) = \frac{\left(x_0e^{rT}\right)^{\alpha}}{\alpha}\left(\int_{\mathbb{R}}\left(F_{\mu}(T,z)\right)^{\frac{1}{1-\alpha}}\varphi_T(z)dz\right)^{1-\alpha}$ for a fixed distribution $\mu \in \mathcal{P}(\mathbb{R})$. To simplify the trackability issue, we may parametrize $\rho_{\mu^{*}}$ as exponential family or Gaussian mixtures \citep{goodfellow2016deep}.  Finally, to get the DRBC optimal control with real observations of stock prices, we plug in Theorem \ref{Karatzassolution} with the learned worst-case probability and the observations.

\section{Additional Experiments}\label{add}

%%%%%%%%%%%%%%%%%%%%%%%%%%%%%%%%%%%%%%%%%%%%%%%%%%%%%%%%%%%%

\subsection{The Finite Prior Case with KL Uncertainty Set}\label{fulldroexp}
In this section, we use Sharpe ratio and value function (expected utility) for Problem (\ref{eq:merton-bayes}) as evaluation metrics to compare DRBC method with different baselines in different settings. In Setting 1, market paths are generated from Equation (\ref{eq:merton-stock}) multiple times with a groundtruth distribution of drift. We choose an incorrect prior for both Bayesian and DRBC methods. We compare the performance of the Bayesian approach with the incorrect prior (BIP), the correct prior (BCP) (using grondtruth) to DRBC method and report the results in Table \ref{result6.4}. The results indicate the effectiveness of DRBC over prior misspecification.

Setting 2 gives comparisons between DRBC and DRC methods under another market setting where drift $B$ in Equation (\ref{eq:merton-stock}) degenerates to a single point. Again, we choose an incorrect prior for both DRBC and DRC to compute the optimal policies, and report evaluation metrics of them together with the BCP in this case (BCPD) in Table \ref{result6.4-2}. The results clearly show that DRBC reduces the overpessimism and is relatively stable in terms of $\delta$ compared with DRC.

\begin{table}[t]
\caption{Setting 1: comparison of average Sharpe ratios and expected utilities for Bayesian with correct prior ($\text{BCP}$), Bayesian with incorrect prior ($\text{BIP}$), and DRBC ( $\delta = 10^{-3}/10^{-2}$).}
\label{result6.4}
\vskip 0.15in
\begin{center}
\begin{small}
\begin{sc}
\begin{tabular}{lccr}%{p{2.65cm}p{2cm}p{2.7cm}}
\toprule
Method & Sharpe ratio & Expected Utility\\
\midrule
$\text{BIP}$ & 0.68057  & $3.41674$\\% \pm 0.63021$ \\
$\text{BCP}$ & 1.05472 & $4.37966$\\% \pm 1.00335$  \\
% *DRBC ($\delta = 0.00001$)  & 2.9274 $\pm$ 0.9729\\
% *DRBC ($\delta = 0.0001$)  & 3.0381 \\
DRBC ($10^{-3}$)  & 0.93987 & $3.89788$\\% \pm 0.80991$\\
DRBC ($10^{-2}$)  & 0.93989 & $3.89793$\\% \pm 0.80993$\\
% DRBC ($\delta = 0.01$)  & 1.0964 \\
\bottomrule
\end{tabular}
\end{sc}
\end{small}
\end{center}
\vskip -0.1in
\end{table}

\begin{table}[t]
\caption{Setting 2: comparison of average Sharpe ratios and expected utilities for Bayesian with correct degenerate prior ($\text{BCPD}$); DRC and DRBC ($\delta = 10^{-3}/10^{-2}$).}
\label{result6.4-2}
\vskip 0.15in
\begin{center}
\begin{small}
\begin{sc}
\begin{tabular}{lccr}%{p{2.65cm}p{2cm}p{2.7cm}}
\toprule
Method & Sharpe ratio & Expected Utility\\
\midrule
$\text{BCPD}$ & 2.44595  & $6.07575$\\% \pm 0.72128$ \\
$\text{DRC}$($10^{-3}$) & 1.04900 & $3.46948$\\% \pm 0.41257$  \\
$\text{DRC}$($10^{-2}$) & 0.91977 & $3.38634 $\\%\pm 0.40271$  \\

DRBC ($10^{-3}$)  & 1.48139 & $4.43138$\\% \pm 0.70654$\\
DRBC ($10^{-2}$)  & 1.48136 & $4.43145$\\% \pm 0.70654$\\
% DRBC ($\delta = 0.01$)  & 1.0964 \\
\bottomrule
\end{tabular}
\end{sc}
\end{small}
\end{center}
\vskip -0.1in
\end{table}

\subsection{High Dimensional Synthetic Experiments with KL Uncertainty Set}\label{highdimres}

In this section, we scale the dimension of SDE up from one to one hundred to show the performance of our method with Sharpe Ratio and also show the necessity practicality of our method since it is designed beyond low dimensional cases. We modify Algorithm \ref{DROpo} to high dimensional formulas to get optimal fractions. The results confirm DRBC can reduce over-pessimism, and certify our method in high dimensional settings. Details of implementation of DRBC and DRC are in section \ref{highdimdetail}.

\begin{table}[H]
\caption{Comparison of average Sharpe ratios for Bayesian with correct degenerate prior ($\text{BCPD}$); DRC and DRBC for 100 assets.}
\label{result6.4-3}
\vskip 0.15in
\begin{center}
\begin{small}
\begin{sc}
\begin{tabular}{cc}%{p{2.65cm}p{2cm}p{2.7cm}}
\toprule
Method & Sharpe ratio\\
\midrule
$\text{BCPD}$ & 0.954 \\% \pm 0.72128$ \\
$\text{DRC}$ & 0.397\\% \pm 0.41257$  \\
$\text{DRBC}$ & 0.591\\%\pm 0.40271$  \\

\bottomrule
\end{tabular}
\end{sc}
\end{small}
\end{center}
\vskip -0.1in
\end{table}

\section{Experiment Details}\label{ED}
\subsection{Plug-in Method for the LQ Experiment}
\label{app:lq-plugin}

We implement a certainty-equivalent \emph{plug-in} baseline for the DRBC--LQ experiment. The plug-in Merton method used later will be similar so we omit it.
The plug-in method first estimates the unknown drift parameter $\theta$ from observed state--control trajectories,
then solves the classical LQ regulator with the estimated drift matrix $A(\widehat\theta)$ and applies the
resulting linear feedback control.

Recall the controlled diffusion
\begin{equation}
\label{eq:lq-plugin-sde}
dX_t=\big(A(\theta)X_t+Gu_t\big)\,dt+\Sigma\,dW_t,\qquad
A(\theta)=A_0+\sum_{i=1}^{m}\theta_iA_i,
\end{equation}
where $X_t\in\RR^d$ is fully observed, $u_t\in\RR^k$ is the applied control, and $\Sigma$ is known and
nonsingular.

Let $\Delta>0$ be the simulation time step and $t_k=k\Delta$ for $k=0,\dots,N$ with $T=N\Delta$.
Define the corrected increments
\begin{equation}
\label{eq:lq-plugin-increments}
\Delta Y_k := X_{t_{k+1}}-X_{t_k}-Gu_{t_k}\Delta.
\end{equation}
Under \eqref{eq:lq-plugin-sde} and an Euler--Maruyama discretization,
\begin{equation}
\label{eq:lq-plugin-regression}
\Delta Y_k \approx A(\theta)X_{t_k}\Delta + \Sigma\sqrt{\Delta}\,\varepsilon_k,
\qquad \varepsilon_k\sim\mathcal N(0,I_d)\ \text{i.i.d.}
\end{equation}
Because $\Sigma$ is known, the Gaussian likelihood implied by \eqref{eq:lq-plugin-regression} yields an MLE that
coincides with a generalized least-squares (GLS) regression.
Let
\[
r_k := \frac{\Delta Y_k}{\Delta}-A_0X_{t_k}\in\RR^d,
\qquad
\Phi_k := \big[A_1X_{t_k},\,A_2X_{t_k},\,\ldots,\,A_mX_{t_k}\big]\in\RR^{d\times m},
\]
so that $r_k \approx \Phi_k\theta + \text{noise}$.
Whitening by $\Sigma^{-1}$ gives
\[
\tilde r_k := \Sigma^{-1}r_k,\qquad \tilde\Phi_k := \Sigma^{-1}\Phi_k,
\]
and the MLE/GLS estimator is
\begin{equation}
\label{eq:lq-plugin-mle}
\widehat\theta
=
\arg\min_{\theta\in\RR^m}\sum_{k=0}^{N-1}\big\|\tilde r_k-\tilde\Phi_k\theta\big\|_2^2
=
\Big(\sum_{k=0}^{N-1}\tilde\Phi_k^\top\tilde\Phi_k\Big)^{-1}
\Big(\sum_{k=0}^{N-1}\tilde\Phi_k^\top\tilde r_k\Big).
\end{equation}
In practice, when $\sum_k\tilde\Phi_k^\top\tilde\Phi_k$ is ill-conditioned we use a small ridge regularization,
i.e., replace it by $\sum_k\tilde\Phi_k^\top\tilde\Phi_k+\lambda_{\rm rid}I_m$ with $\lambda_{\rm rid}>0$ fixed.

Given $\widehat\theta$, we form the estimated drift matrix
\[
\widehat A := A(\widehat\theta)=A_0+\sum_{i=1}^m \widehat\theta_iA_i.
\]
The plug-in controller then solves the standard finite-horizon continuous-time LQ regulator for the system
\[
dX_t=(\widehat A X_t+Gu_t)\,dt+\Sigma\,dW_t,
\]
with quadratic running and terminal costs as in \eqref{eq:lq-J}.
Let $P(t)\in\RR^{d\times d}$ denote the solution to the Riccati differential equation
\begin{equation}
\label{eq:lq-plugin-riccati}
-\dot P(t)=Q + \widehat A^\top P(t) + P(t)\widehat A - P(t)G R^{-1}G^\top P(t),
\qquad P(T)=Q_T,
\end{equation}
and define the feedback gain
\begin{equation}
\label{eq:lq-plugin-gain}
K_{\rm PI}(t):=R^{-1}G^\top P(t).
\end{equation}
The resulting certainty-equivalent (plug-in) policy is the linear state feedback
\begin{equation}
\label{eq:lq-plugin-policy}
u_t^{\rm PI}=-K_{\rm PI}(t)X_t.
\end{equation}
We evaluate $u^{\rm PI}$ under the same misspecified/ground-truth settings used for DRBC--LQ and report its
performance as a baseline in Section~\ref{sec:exp-lq}.

\subsection{Details of DRBC LQ Experiments}
\label{app:lq-details}
We simulate \eqref{eq:lq-classical} using Euler--Maruyama on a uniform grid with horizon $T=2.0,\Delta t = 0.02,$ and $K=T/\Delta t = 100 \text{ steps.}$
The dimensions are $d_x=10$,$d_u=5,$ and $p=\dim(\theta)=10.$
We use the affine parameterization $A(\theta) \;=\; A_0 + \sum_{j=1}^p \theta_j A_j.$
with the tridiagonal base drift $A_0\in\mathbb{R}^{d_x\times d_x}$: $(A_0)_{ii}=-a_{\text{diag}}, (A_0)_{i,i+1}=a_{\text{upper}},(A_0)_{i+1,i}=a_{\text{lower}}, i=1,\dots,d_x-1,
$
with $a_{\text{diag}}=0.6, a_{\text{upper}}=0.15, a_{\text{lower}}=-0.1.$
For each $j\in\{1,\dots,p\}$, we construct a \emph{dense} (full) matrix $A_j$ as $A_j \;=\; \texttt{Aj\_diag}\cdot D_j \;+\; \texttt{Aj\_upper}\cdot O_j,$
where $D_j$ is a random diagonal matrix and $O_j$ is a random off-diagonal matrix (with zero diagonal). Both components are normalized to have Frobenius norm one ($\|D_j\|_F=\|O_j\|_F=1$) to make the scale knobs interpretable across dimensions. We set $\texttt{Aj\_diag}=0.2$ and $\texttt{Aj\_upper}=0.05,$
and fix the random seed for the $A_j$ construction to \texttt{Aj\_seed}=12345 for reproducibility.
We take $G\in\mathbb{R}^{d_x\times d_u}$ to actuate the first $d_u$ coordinates:
\[
G=\begin{bmatrix} I_{d_u}\\ 0\end{bmatrix}.
\]
The diffusion matrix is diagonal with $\Sigma = \sigma_{\text{scale}}I_{d_x}$ and $\sigma_{\text{scale}}=0.7.$
The quadratic costs are diagonal: $Q = q_{\text{scale}} I_{d_x},$ $Q_f=qf_{\text{scale}} I_{d_x},$ and $R=r_{\text{scale}} I_{d_u}$ 
with $q_{\text{scale}}=3.0,$ $qf_{\text{scale}}=3.0,$ $r_{\text{scale}}=1.0.$ 
In each run we sample the ground-truth drift parameter from a centered Gaussian prior $\theta^\star \sim \mu^\star = \mathcal N(0,\mu_{\star}^2 I_p),$ with $\mu_{\star}=0.5.$
To test robustness to prior misspecification, the DRBC training uses a different nominal prior $\theta \sim \mu = \mathcal N(0,\mu_{\text{nom}}^2 I_p)$ with $\mu_{\text{nom}}=1.0,$
so $\mu\neq\mu^\star$ (heavier tails under $\mu$).
For evaluation we draw $x_0=1.0$, $X_0 \sim \mathcal N(0, x_{0}^2 I_{d_x}),$
and estimate each method's performance under $\theta^\star$ via Monte Carlo with $B_{\text{eval}}=512$
independent rollouts per run. Within each run, we use \emph{common random numbers} (the same Brownian noise and the same $X_0$ draws) across methods to reduce Monte Carlo variance in pairwise differences. We report mean $\pm$ standard deviation across $N_{\text{runs}}=100$
independent runs (seed $0$).
The plug-in method proceeds as described in Appendix~\ref{app:lq-plugin}. Concretely, in each run we generate a single identification trajectory under $\theta^\star$ using a random linear feedback exploration policy $u_t = -K_{\text{rand}} X_t,$ $(K_{\text{rand}})_{ab}\stackrel{i.i.d.}{\sim}\mathcal N(0,\texttt{scale}^2),$ $\texttt{scale}=0.3,$
and then compute a single-trajectory GLS (whitened least-squares) estimate $\widehat\theta$ together with an information/precision proxy $S^b$ (denoted \texttt{S\_prec} in code). Given $\widehat\theta$, we solve the finite-horizon Riccati equation for $A(\widehat\theta)$ and deploy the resulting time-varying LQ feedback controller (certainty-equivalent control).

We also report an oracle controller that knows $\theta^\star$ and solves the LQ problem with drift $A(\theta^\star)$ exactly via the same Riccati solver. We use the gap to the oracle as an interpretable performance measure.

We implement the policy-learning step of Algorithm~2 using a fixed robustness radius $\delta\in\{0.01,0.05,0.1\}$ and the simplification $\lambda \;=\; \frac{C}{\sqrt{\delta}},\qquad C>0,$
with default $C=\texttt{C\_lam}=1.0 (\,\lambda\in\{10,\;4.472,\;3.162\}\text{ for the three }\delta\text{s})$.
This is consistent with duality result in \citet{faury2020distributionally}.
DRBC uses the same identification data as plug-in to compute $(\widehat\theta,S^b)$, and then \emph{freezes} these belief features during policy learning.
We parameterize a time-varying linear feedback controller $u_k = -K_\psi(k;\widehat\theta,S^b)\,x_k,$ for $k=0,\dots,K-1,$
where $K_\psi(\cdot)$ is produced by a small neural network that takes as input a normalized time feature $t_k=k/(K-1)$ together with the concatenated belief features $(\widehat\theta,\mathrm{vec}(S^b/\bar s))$ (with $S^b$ normalized by its mean absolute entry $\bar s$ for scale robustness). The network is a 3-layer MLP with two hidden layers: $\texttt{Linear} \rightarrow \tanh \rightarrow \texttt{Linear} \rightarrow \tanh \rightarrow \texttt{Linear},$
hidden width \texttt{hidden}=128, and outputs a matrix $K_\psi(k)\in\mathbb{R}^{d_u\times d_x}$. We apply an output $\tanh$ nonlinearity and scale by \texttt{K\_scale}=1.0 to control gain magnitude.

At each gradient step we sample $\theta^{(1)},\dots,\theta^{(N_\theta)}\sim \mu$ with $N_\theta=\texttt{N\_theta}=32$. For each $\theta^{(i)}$ we simulate \texttt{B\_traj}=64 rollouts (with freshly sampled $X_0$ and Brownian noise) and average their costs to estimate $Z_i \approx J(u_\psi;\theta^{(i)})$. We then maximize the fixed-$\lambda$ KL-dual objective $\text{obj}(\psi) \;=\; -\lambda\delta \;-\; \lambda \log\!\left(\frac{1}{N_\theta}\sum_{i=1}^{N_\theta} \exp\!\left(\frac{Z_i}{\lambda}\right)\right),$
implemented stably via a \texttt{logsumexp} computation. We optimize \texttt{loss}$(\psi)=-\text{obj}(\psi)$ by Adam with learning rate \texttt{eta}=0.01 for \texttt{Sin}=200 steps. We clip the control actions elementwise to $[-u_{\max},u_{\max}]$ with \texttt{u\_clip}=50 and clip gradients to norm $10$.
With the default settings above, running the full three-way comparison (plug-in, DRBC for three $\delta$ values, and oracle) for $N_{\text{runs}}=100$ and $B_{\text{eval}}=512$ took approximately 3.5 hours
on a single CUDA-enabled Colab GPU. For completeness, Table~\ref{tab:lq_utility_appendix} reports the raw achieved utility values corresponding
to the gap-to-oracle results in Table~\ref{tab:lq_main}.

\begin{table}[t]
\centering
\caption{Synthetic LQ experiment: achieved utility (mean $\pm$ std.\ over 100 runs).}
\label{tab:lq_utility_appendix}
\small
\setlength{\tabcolsep}{4pt}
\begin{tabular}{lccc}
\toprule
Method & $\delta=0.01$ & $\delta=0.05$ & $\delta=0.10$ \\
\midrule
Plug-in (MLE $\widehat\theta$) 
& $-43.7\pm13.0$ & $-43.7\pm13.0$ & $-43.7\pm13.0$ \\
DRBC 
& $-40.9\pm1.3$ & $-41.3\pm1.3$ & $-41.4\pm1.3$ \\
Oracle (knows $\theta^\star$) 
& $-40.1\pm1.3$ & $-40.1\pm1.3$ & $-40.1\pm1.3$ \\
\bottomrule
\end{tabular}
\vspace{-0.05in}
\end{table}

Empirically, the plug-in baseline has much larger across-run variance than DRBC and the oracle. This behavior is expected in our setting for two reasons:
(i) \emph{single-trajectory identification}: $\widehat\theta$ is estimated from a single finite-horizon trajectory ($T=2$, $K=100$ steps), so occasional unfavorable noise realizations lead to large estimation errors;
(ii) \emph{sensitivity of certainty-equivalent control}: the Riccati-based controller computed from $A(\widehat\theta)$ can be overly aggressive or poorly matched when $A(\widehat\theta)$ deviates from $A(\theta^\star)$, causing transient state growth and rare but catastrophic cost outliers. These heavy-tailed failures inflate the sample standard deviation of plug-in performance. In contrast, DRBC optimizes an entropic (risk-sensitive) objective under the misspecified prior and uses action clipping, which discourages policies that incur large costs under plausible parameter draws, leading to substantially more stable performance across runs.

\subsection{Solving DRBC Merton with Finite Support Prior}\label{toy}
This section introduces the policy learning method that utilizes the swap structure for the Merton problem.
For simplicity, we focus on a commonly used example when the prior distribution takes value on a finite set of points $\left\{b_1, \ldots, b_d\right\}$ with probability mass function $P\left(B = b_i\right) = p_i \in (0,1)$, for $i = 1, 2, \ldots, d,$ where $d \geq 1$. We will make use of the swap argument to faciliate the computation here. For the fixed prior $\mu$, the optimal value function becomes 
$\tilde{V}(\boldsymbol{p}) := \frac{\left(x_0e^{rT}\right)^{\alpha}}{\alpha}\left(\int_{\mathbb{R}}\left(\tilde{F}_{\boldsymbol{p}}(T,z)\right)^{\frac{1}{1-\alpha}}\varphi_T(z)dz\right)^{1-\alpha},$
where $\tilde{F}_{\boldsymbol{p}}(t,z) = \sum_{i=1}^dp_iL_t(b_i,z)$. If we want to find the optimal probability measure $\tilde{Q}^*(\lambda)$, then it suffices to solve the convex problem (justified by a use of Sion's minimax theorem and the closed form solution from \citet{KZ98})
\begin{align}\label{toyopti}
&\inf_{\substack{\boldsymbol{q} = (q_1, \ldots, q_d)\\\sum_{i=1}^d q_i = 1,\text{and }q_i \geq 0}}V(\boldsymbol{q})=
    \inf_{\substack{\boldsymbol{q} = (q_1, \ldots, q_d)\\\sum_{i=1}^d q_i = 1,\text{and }q_i \geq 0}}\tilde{V}(\boldsymbol{q}) + \lambda\sum_{i=1}^d q_i \log \frac{q_i}{p_i}.
\end{align}
 % The precise algorithm for updating $\pi$ is given in Algorithm \ref{DROpo} in Appendix \ref{algo}. We remark that the DRBC learning steps are done via simulated (training) samples $\{S_t\}_{t \in [0,T]}$ before we observe the real market data ($\{\tilde{S}_t\}_{t \in [0,T]}$). In other words, we derive optimal policies of the form in Theorem \ref{Karatzassolution} with a worst-case probability, and $\hat{\pi}_{\text{DRBCKL}}$ is computed by this worst-case probability and $\{\tilde{S}_t\}_{t \in [0,T]}$. 
Note that the value function can be seen as an expectation with respect to the Gaussian distribution $\mathcal{N}(0,T)$, thus it can be computed via simulation methods (plain Monte Carlo).

On the other hand, the gradient information of $\tilde{V}(\boldsymbol{p})$ is hard to compute. Thus, we will focus on the zero-order methods \citep{nesterov2017random, shamir2017optimal, duchi2015optimal}. We will mimic first-order optimization strategies by replacing the
gradient of the objective function with an approximation built through finite differences \citep{leveque2007finite}: for a fixed $i = 1, 2, \ldots, d,$ let $h >0$ small, then $$\frac{\partial \tilde{V}(\boldsymbol{p})}{\partial p_i} \approx \frac{1}{2h}\left[\tilde{V}(\boldsymbol{p}+h\boldsymbol{e}_i) - \tilde{V}(\boldsymbol{p}-h\boldsymbol{e}_i)\right],$$
where $\boldsymbol{e}_i$ is the unit vector in the $i$th coordinate direction. Note that here we use the central difference method, which has the $\mathcal{O}(h^2)$ rate of convergence, instead of the backward or forward difference method, which both have the $\mathcal{O}(h)$ rate of convergence. The exact steps to solve Problem (\ref{toyopti}) are summarized in Algorithm \ref{DROpo}.

% Then we can use the empirical risk minimization technique \citep{Bach} to find the optimal $\boldsymbol{q}^*$ (Algorithm \ref{DROpo}).
\begin{algorithm}\label{DROpo}
\caption{rMLMC DRBC KL Policy Learning Step}\label{DROpo}
\textbf{Input:} Prior distribution $\boldsymbol{p}$, step-size sequence $\{\alpha_k\}_{k \in \mathbb{Z}_{\geq 0}}$, parameter $h >0$ and $\lambda >0$, observations $\{S_t\}_{t \in [0,T]}$ (see remark in Section \ref{toy} for which process to plug in),   initializations of $\boldsymbol{q}$ and $k = 0$. \\
\textbf{Output:} DRBC optimal policy $\{\pi^*_{\lambda,t}\}_{t \in [0,T]}$.\\ 
\textbf{repeat}
\begin{itemize}
\item For each $i = 1,2,\ldots, d$, approximate $\tilde{V}(\boldsymbol{q}+h\boldsymbol{e}_i)$ and $\tilde{V}(\boldsymbol{q}-h\boldsymbol{e}_i))$ by simulations. 
\item For each $i = 1,2,\ldots, d$, compute $\text{GF}_i = \frac{1}{2h}\left[\tilde{V}(\boldsymbol{q}+h\boldsymbol{e}_i) - \tilde{V}(\boldsymbol{q}-h\boldsymbol{e}_i)\right] + \lambda \left(\log\left(\frac{q_i}{p_i}\right)+1\right)$. 
\item Update $\boldsymbol{q} = \boldsymbol{q} - \alpha_k \text{GF}$, update $\boldsymbol{q} = \text{softmax}(\boldsymbol{q})$, and update $k = k+1$.
\end{itemize}
\textbf{until} $\boldsymbol{q}$ converges to $\boldsymbol{q}^*$.\\
\textbf{Return }$\pi^*_{\lambda,t} = \frac{\int_{\mathbb{R}}\nabla F_{\boldsymbol{q}^*}\left(T,z+Y_t\right)\left(F_{\boldsymbol{q}^*}\left(T,z+Y_t\right)\right)^{\frac{\alpha}{1-\alpha}}\varphi_{T-t}(z)dz}{(1-\alpha)\sigma \int_{\mathbb{R}}\left(F_{\boldsymbol{q}^*}\left(T,z+Y_t\right)\right)^{\frac{1}{1-\alpha}}\varphi_{T-t}(z)dz}$, for each $t \in [0,T]$, where $Y_t $ is given by Equation (\ref{YS}).
\end{algorithm}

\subsection{Details of Experiments in Section \ref{smps}}\label{KMM}
\subsubsection{Choice of Performance Measures}
From \citet{GLS22}, if \[
\Phi(x) =
\begin{cases}
\frac{x^{\gamma}}{\gamma}, & \text{if } x \geq 0, \\
\frac{-(-x)^{\gamma}}{\gamma}, & \text{if } x < 0,
\end{cases}
\quad \text{where } \gamma \in (0,1) \text{ is a fixed constant,}
\] then the optimal terminal wealth has the closed-form
\begin{align}\label{closedform}
  X^*_T
  &= \exp \Bigg( \frac{1}{\alpha} \Bigg( \frac{p}{2T} 
  \left(W_T + \frac{B-b_0}{\sigma}\right)^2 + q \left(W_T + \frac{B-b_0}{\sigma}\right) + c \Bigg) \Bigg),
\end{align}where
$$
\left\{
\begin{aligned}
    p &=  \frac{\frac{1}{1 - \alpha} + \sigma_0^2 -\sqrt{\sigma_0^4 + \frac{2 - 4\alpha}{1 - \alpha} \sigma_0^2 + \frac{1}{(1 - \alpha)^2} - \frac{4\alpha}{1 - \alpha} \gamma}}{2(\sigma_0^2 + \gamma)}, \\
    q &= \frac{\alpha(1 - p)}{(1 - \alpha)(1 - \gamma p)} \nu, \\
    c &= \alpha \left[ \log(x_0) + rT + \left( \nu^2 - \frac{\left( \nu - \frac{q}{\alpha} \right)^2}{1 - \frac{p}{\alpha}} \right) \frac{T}{2} \right].
\end{aligned}
\right.
$$
From \citet{GLS22}, for a fixed $b \in \mathbb{R}$ (more precisely in the support of the prior)
\[
E^{Q^b}[u(X^*_T)] = \frac{1}{\alpha \sqrt{1-p}} \exp \Bigg(
\frac{pT}{2(1-p)} \nu^2_b - \frac{qT}{1-p} \nu_b + \frac{q^2T}{2(1-p)} + c
\Bigg)
\]
\[
= \frac{1}{\alpha \sqrt{1-p}} \exp \Bigg(
\frac{pT}{2(1-p)} \frac{(b - b_0)^2}{\sigma^2} + \frac{qT}{1-p} \frac{b - b_0}{\sigma}
+ \frac{q^2T}{2(1-p)} + c
\Bigg).
\]
% thus 
% \begin{align*}
%     V(x_0) &= \sup_{\alpha \in \mathcal{A}(x_0)} \int_{\mathbb{R}}\Phi\left(E^{Q^b}\left[u(X_T)\right]\right)d\mu(b)\\
%     &= \int_{\mathbb{R}} \frac{1}{\alpha \sqrt{1-p}} \exp \Bigg(
% \frac{pT}{2(1-p)} \frac{(b - b_0)^2}{\sigma^2} + \frac{qT}{1-p} \frac{b - b_0}{\sigma}
% + \frac{q^2T}{2(1-p)} + c
% \Bigg) d\mu(b)\\
% &= 
% \frac{1}{\alpha \sqrt{1-p} \sqrt{1 - \frac{pT \sigma_0^2}{(1-p) \sigma^2}}}
% \exp \Bigg(
% \frac{q^2 T}{2(1-p)} + c + \frac{pT (\mu_0 - b_0)^2}{2(1-p) \sigma^2}
% + \frac{qT (\mu_0 - b_0)}{(1-p) \sigma}
% + \frac{q^2 T^2 \sigma_0^2}{2(1-p) \sigma^2 \big( \sigma^2 - pT \sigma_0^2 \big)}
% \Bigg).
% \end{align*}since $\mu \sim \mathcal{N}\left(\mu_0, \sigma_0^2\right)$.

Following the discussion in Appendix \ref{algo4}, in the alternate optimization steps for the DRBC algorithm, the essential property is how accurate that this optimal terminal wealth can be used to compute $E^{Q_{b_1}}\left[u(X_T)\right]$ for a fixed $b_1 \in \mathbb{R}$, thus we choose it to compare with the closed form solution. 
The performance measures that we taken into considerations are their equivalence properties are used in the alternative optimization procedures. 
% We could also directly test the distribution of the learned and theoretical optimal terminal wealth. For example, we may compare the Wasserstein distance between them by computing the empirical probability density functions after sampling. However, as indicated by Equation (\ref{closedform}), to sample from the theoretical optimal terminal wealth, and exponential term with $\frac{1}{\sigma^2}$ will be computed, which will lead to numerical instability with high probability. On the other hand, the theoretical value $V(x_0)$ can be computed safely, and so as $u\left(h_{\boldsymbol{\theta}}(W_T,B)\right)$ and $\int_{\mathbb{R}}\Phi\left(E^{P^b}\left[u\left(h_{\boldsymbol{\theta}}(W_T,B)\right)\right]\right)d\mu(b)$, since no exponential terms are involved.
\subsubsection{Details of DRBC Merton Problem Experiments}
\label{H.1}
We start with the hyperparameter settings for the synthetic data, then introduce our neural network settings. The network is trained on 1 Nvidia A100 GPU for about one GPU hours.

Sample size is set to be 2000. Second level sample size for rMLMC method to be 100, and geometric distribution parameter $R=0.65$. To make the synthetic data close to real financial market observations, we let $\sigma=0.4$, $\sigma_0 = 2$, $r=0.05/0.1$, $b_0=0.1$, $T=1$, $\mu_0=0.1$, $b=0.1/0.3$, and $x_0=1$. For the function $\Phi'$, to get numerical stability, we use a truncation to approximate at 0.01 and 2, when $x$ is smaller than 0.001, $\Phi'$ gives a constant; when $x$ is larger than 2, $\Phi'$ is $x^{-\frac{1}{2}}$; between 0.001 and 2, $\Phi'$ is the linear interpolation of above two functions.

We use a modified multi-layer perceptron (MLP) $h_{\theta}$ to estimate $X^*_T$. The MLP has four layers in total, first layer takes 2-dim input $(W_T,B)$ and maps to 128 hidden nodes; second layer maps 128 nodes to 256 hidden nodes; third layer maps 256 nodes to 256 nodes, and final layer maps 256 nodes to one output. We use LeakyReLU \citep{maas2013rectifier} as activation function with parameter 0.01. Since our parameter settings mimic real financial data, we need to modify the output of the MLP to satisfy non-negative constraint and match the real terminal wealth distribution easier. We impose a partial linear structure on top of the MLP with constant 1 and learnable parameter $b$. The constant is from financial practices that under optimal portfolio strategy, investor earns excess return. $b$ is set here for an easier learning process and better gradient flow. We also set $\kappa$ a learnable parameter and use gradient descent alternatively for MLP parameters, $b$ and $\kappa$ in the training pipeline. 

For learning rate schedule, we adopt a Warmup-Stable-Decay \citep{wen2024understandingwarmupstabledecaylearningrates} learning rate schedule which achieves great success in large language models. We use 1000 epochs in total, with 10 epochs to linearly warmup the learning rate to 0.001, then steadily train 400 epochs, and finally decay to 0.0003 for the rest 590 epochs. For $b$, learning rate is fixed at $10^{-4}$. For $\kappa$, learning rate linearly decays from 0.01 to $10^{-4}$, then stays until training is done. We use Adam \citep{DBLP:journals/corr/KingmaB14} as the optimizer and saved models can be found at our repository. Training losses for different $b$ and $r$ values are shown in Figure \ref{fig:training_loss}. In all three hyperparameter settings, our network shows stable loss curves and achieve good performances comparing to theory results, as stated in Section \ref{smps}. 

\begin{figure}[ht]
\vskip 0.2in
\begin{center}
\centerline{\includegraphics[width=\columnwidth]{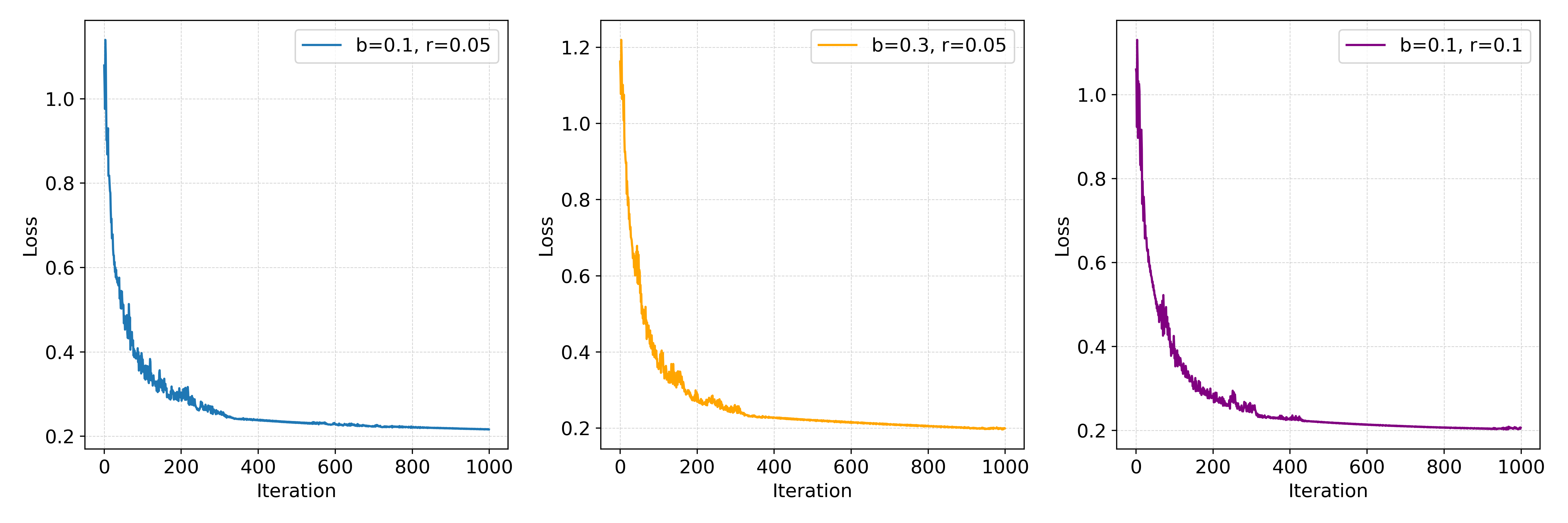}}
\caption{Training losses for different $b$ and $r$ values}
\label{fig:training_loss}
\end{center}
\vskip -0.2in
\end{figure}

We denote the trained optimal parameter as $\theta^*$. We evaluate the model by randomly generating new input pairs $(W_T,B)$ of 2000 samples with different seeds to get $E^{Q^{b}}\left[u(h_{\theta^*} (W_T,B)) \right]$. And we run such experiment 100 times to get the mean and standard deviations shown in Table \ref{result6.1}.

\subsection{Details of Rate of Convergence for Policy Evaluation}\label{CLTnum}

To make sure the validity of our comparison, we first run the policy learning step to get the $\pi$ initialization. We run policy learning step with prior values $[0.01,0.46,0.30,0.21,0.27]$ and probability mass function of prior random variable $B$ $[0.05,0.35,0.35,0.15,0.1]$. We equally divide $[0,T]$ into 1000 intervals, and initialize $\lambda=100$. When calculating the gradient of $\tilde{V}$, we set $h=10^{-6}$ and learning rate $\alpha_k=10^{-5}$ a same number across all loops $k$. We set the convergence condition to be the sum of squared errors of $\boldsymbol{q}_k-\boldsymbol{q}_{k-1}$ less than $10^{-5}$. After we get the converged $\boldsymbol{q}^*$, we get $\pi$ from Equation (\ref{YS}), as in Algorithm \ref{DROpo}.

Then we run Algorithm \ref{alg:kl-eval}. We set $n_0=3$ and $\alpha_k=0.01$. Other hyperparameters are the same as Section \ref{H.1} and prior is the same as above. Here we set the convergence condition to be $ \frac{\partial}{\partial \lambda}\mathcal{E}_{\text{KL}} < 0.3\delta$. For using inner samples to estimate $E^{P^b}\left[u(X_T) \right]$, we simulate Equation (\ref{SDE2}) with $B = b$ 100 times to get $X_T$ and then get the average. Finally we use the converged $\lambda^*$ to calculate $\mathcal{E}_{\text{KL}}$ with different sizes of $n$ 100 times to get the results in Table \ref{result6.3}. We document that using 4 Intel Skylake 6148, 20-core, 2.4GHz, 150W processors, the whole process including all three $n$ takes about 40 hours.

\subsection{Details of Experiments in Section \ref{realdata}}

The daily $S\&P$ 500 constituents data from 2015-01-01 to 2024-12-31 is from Wharton Research Data Services. For the ease of data cleaning and to avoid stock inclusion and exclusion to the index, we only keep stocks that are $S\&P$ 500 constituents in the whole time window, resulting in 326 stocks left. Interest rate data is from Federal Reserve Bank of St.Louis. We use a rolling window of 1 year for getting the interest rate $r$ and $\sigma$, and use them in DRBC and baseline methods for the following month's investment allocation.   

We remark the choice of $\delta$ is more a managerial decision rather than a scientific choice, and too large $\delta$ will not give meaningful solutions. Here we follow \citet{NianSi} to use the existing data to estimate the distributional shift, which implies a choice of $\delta$. Over the 326 stocks, the mean of $\delta \approx 0.15$, thus we choose it for the experiments. For the prior,  we choose two fixed finite priors with supports inspired by \citet{wang2020continuous}. Prior 1 is [-0.08,0.16,-0.02,0.04,0.10], with probability [0.35,0.08,0.25,0.22,0.10]. Prior 2 is [-0.05,0.15,0.00,0.05,0.10], with probability [0.45,0.05,0.25,0.15,0.1]. The histograms of sharpe ratios for two priors are shown in figure \ref{fig:hist_sr}. We documented that using a single Intel Skylake 6148, 20-core, 2.4GHz, 150W processor, looping over all stocks for a single prior takes about 30 hours. 

\begin{figure}[ht]
  \centering
  % first subfigure
  \begin{subfigure}[b]{0.45\textwidth}
    \centering
    \includegraphics[width=\linewidth]{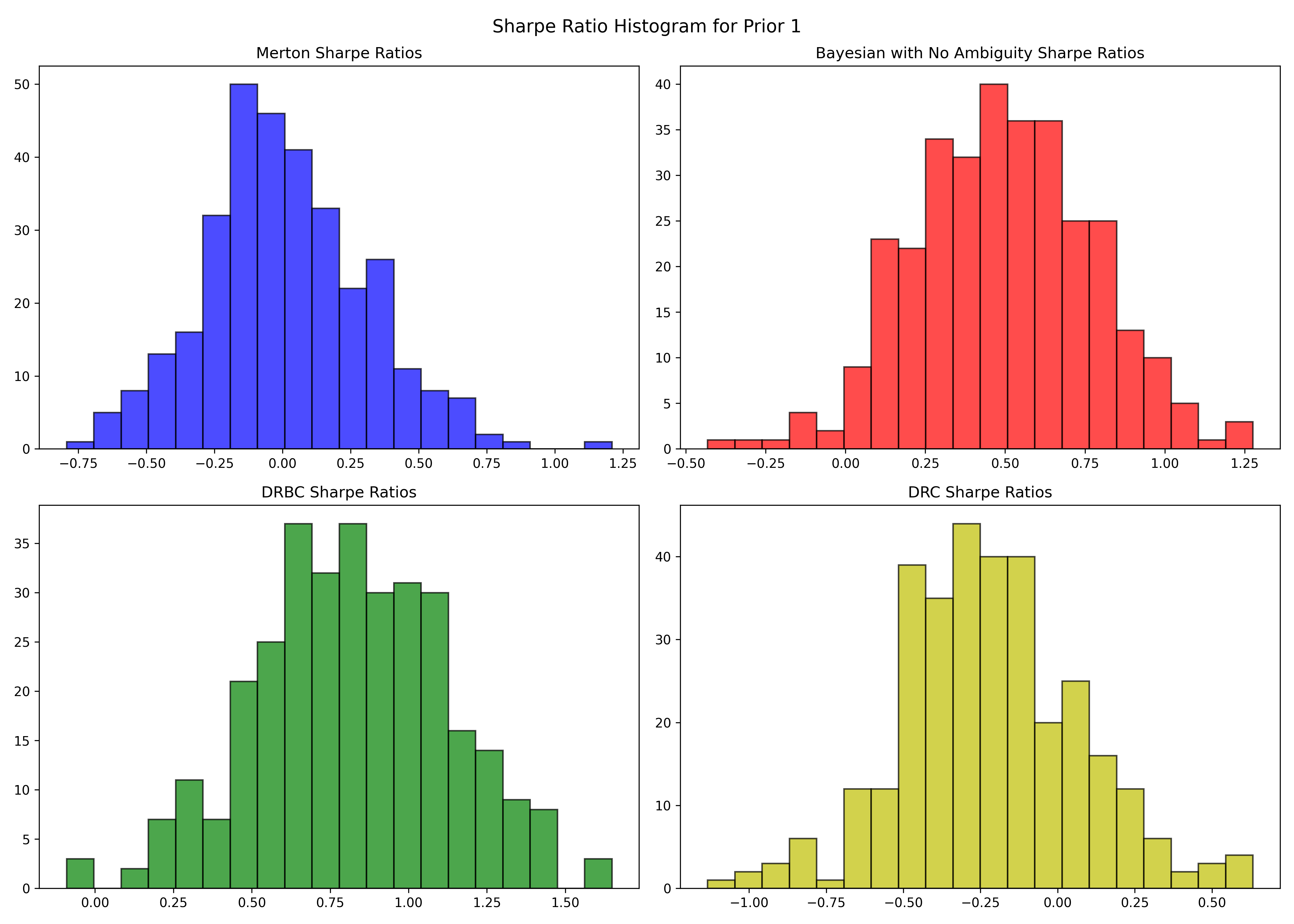}
    \caption{Prior 1}
    \label{fig:1a}
  \end{subfigure}
  \hfill
  % second subfigure
  \begin{subfigure}[b]{0.45\textwidth}
    \centering
    \includegraphics[width=\linewidth]{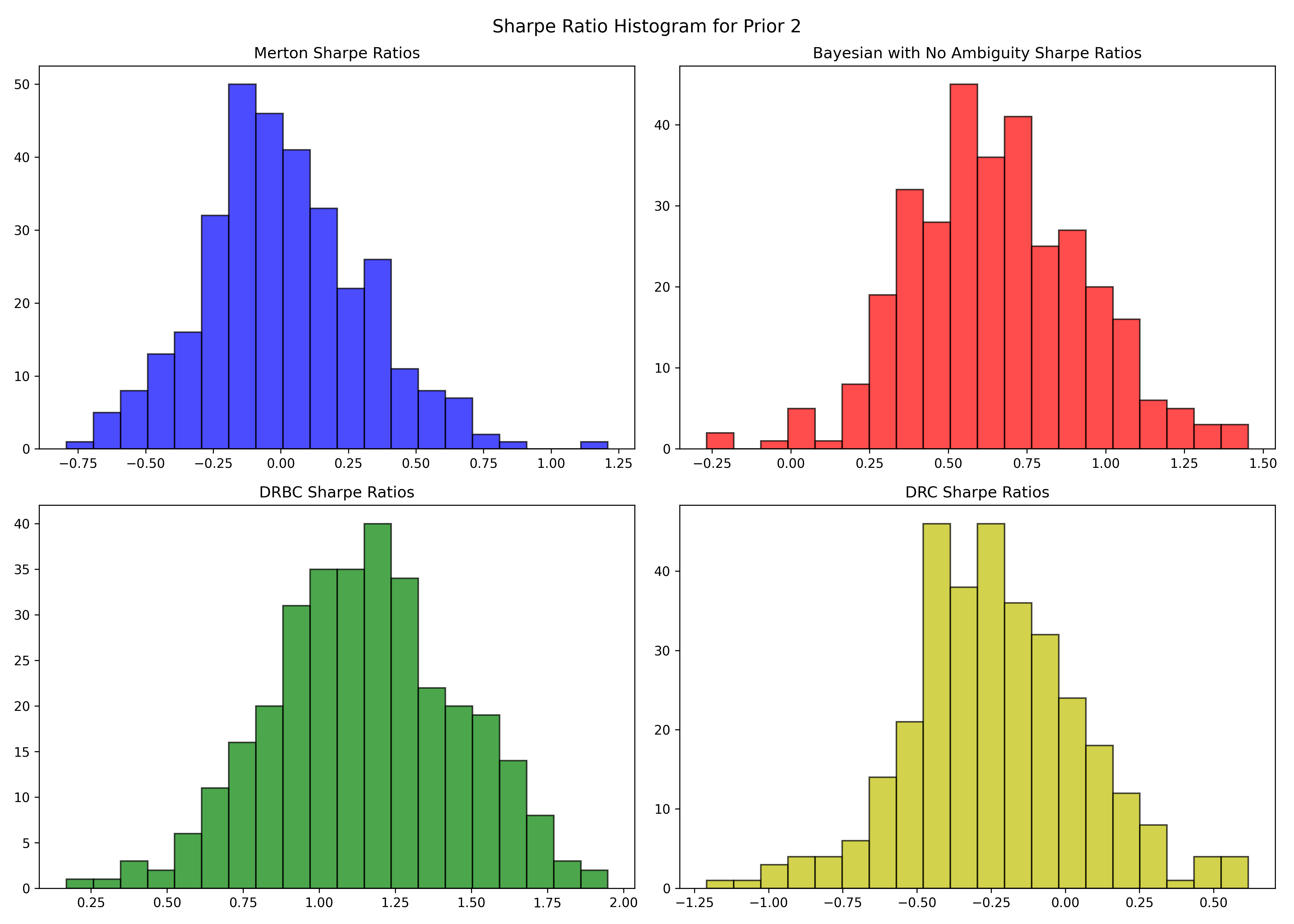}
    \caption{Prior 2}
    \label{fig:1b}
  \end{subfigure}
  \caption{Histogram of Sharpe Ratios under different priors.}
  \label{fig:hist_sr}
\end{figure}

\subsection{Details of Experiments in Section \ref{fulldroexp}}
\subsubsection{Details of Setting 1}
\label{H3.1}
For experiments in Section \ref{fulldroexp}, we run full DRBC algorithm to get $\hat{\pi}_{\text{DRBCKL}}$. Most hyperparameters are the same as in previous settings. One change here is the initialization of $\lambda$. We give a finer pre-condition of $\lambda$ based on \citet{faury2020distributionally}, which proves $\lambda=\mathcal{O}(\frac{1}{\sqrt{\delta}})$. We make our $\lambda=\frac{0.33}{\sqrt{\delta}}$. Another change here is we shift the prior distribution, which is also noted as incorrect prior in Table \ref{result6.4} to $[0.5, 0.05, 0.2, 0.15, 0.1]$, with support of the prior random variable $B$ unchanged. The correct prior distribution is $[0.05,0.5,0.1,0.15,0.2]$. The convergence condition is set to be $\left |\lambda_k - \lambda_{k-1} \right | <  10^{-3}$. 

We regard the full DRBC process as the pre-training phase to get the optimal policy. Then we run the evaluation process as follows: First, we generate market data $\{S_t\}_{t \in [0,T]}$ 200 times by Equation (\ref{eq:merton-stock}) to get 200 paths. Then use Equation (\ref{YS}) to transform $\{S_t\}_{t \in [0,T]}$ to $\{Y_t\}_{t \in [0,T]}$. Thirdly, calculate optimal fractions $\pi$ for BIP, BCP and DRBC cases using Theorem \ref{Karatzassolution} with transformed $Y_t$. Fourthly, simulate Equation (\ref{eq:merton-wealth}) or equivalently Equation (\ref{SDE2}) with $\pi_{\text{BIP}}$, $\pi_{\text{BCP}}$ and $\pi_{\text{DRBC}}$ with the same Brownian terms $W_t$ as the first step to get the wealth paths $X_t$. Finally, we use two metrics to evaluate the performances of BIP, BCP and DRBC. Since DRBC only focuses on terminal wealth, to remove other randomness, we choose the Sharpe ratio definition as in \citet{wang2020continuous}. Another metric is expected terminal utility, as defined in Equation (\ref{eq:merton-bayes}), which is also the value function. We collect terminal utility from all 200 paths to get the mean and we report them in Table \ref{result6.4}. We document that using 2 Intel Skylake 6148, 20-core, 2.4GHz, 150W processors, the whole process takes about half an hour. %and standard deviation of terminal utility. The mean can be regarded as the value function and the standard deviation here is not the standard deviation of value function, but standard deviation of terminal utility.

\subsubsection{Details of Setting 2}\label{Shengbo}
Recall that when $B$ degenerates to a constant, then the Bayesian problem (\ref{eq:merton-bayes}) becomes the Merton's problem. By applying the dynamic programming principle, it suffices to consider the terminal value problem with $V(T) = 1$ and
$$\frac{dV}{dt} + \alpha V \sup_{\pi}\left\{\frac{1}{2}\sigma^2\pi^2\left(\alpha - 1\right) + \left(B - r\right)\pi + r\right\} = 0.$$
An verification argument shows that it suffices to solve
the supremum problem in the ordinary differential equation and the optimal fraction invested in the stock is a constant over time. Based on the theory from \citet{HansenSargent2001}, the Hamilton-Jacobi-Bellman-Isaacs (HJBI) equation for the distributionally robust control (DRC) is
$$\frac{dV}{dt} + \alpha V \sup_{\pi}\inf_{\nu \in \mathcal{U}_{\delta}}\left\{\frac{1}{2}\sigma^2\pi^2\left(\alpha - 1\right) + \left(E_{\nu}[B] - r\right)\pi + r\right\} = 0,$$
where we denote the distribution of $B$ as $\mu$, the uncertainty set is $\mathcal{U}_{\delta} = \{\nu: D_{\text{KL}}(\nu \parallel \mu) \leq \delta\}$, and the notation $E_{\nu}[B]$ denotes the mean of random variable $B$ if its distribution is $\nu$. Similarly as the non-robust Merton's problem, it suffices to solve the sup-inf problem and get the optimal fraction invested in the stock.

For the inner infimum problem, we formulate it as below. Distributions $\nu$ and $\mu$ share same finite support $\{b_i\}_{i=1}^d$. We denote the probability mass of two distributions $\{q_i\}_{i=1}^d$ and $\{p_i\}_{i=1}^d$ respectively.

\begin{align}
\begin{array}{ll}
\text{minimize} & \sum\limits_{i=1}^{d} q_i b_i, \\[10pt]
\text{subject to} & \sum\limits_{i=1}^{d} q_i \ln \left( \frac{q_i}{p_i} \right) \leq \delta, \\[10pt]
& \sum\limits_{i=1}^{d} q_i = 1, \quad q_i \geq 0 \quad \forall i.
\end{array}
\end{align}

After solving the infimum problem, we can directly get the optimal $\pi_{\text{DRC}}$ as in Merton problem, which is a constant across time. For the degenerate prior cases, we can get $\pi_{\text{BCPD}}$ as above. 

The experiment here is similar to Section \ref{H3.1}. In experiment, we choose the prior with $d=5$. We choose the same incorrect prior. First and second steps are the same, except for the prior now is a point mass at 0.46. Then we follow the third step to get $\pi_{\text{DRBC}}$. Finally, we run the final step with $\pi_{\text{DRC}}$, $\pi_{\text{BCPD}}$ and $\pi_{\text{DRBC}}$ to get performance metrics. Using the same hardware as setting 1, we document similar time for the whole process.

\subsection{Details of High Dimensional Experiment Results} \label{highdimdetail}

We first randomly generate high dimensional SDE, then for DRBC, we calculate the empirical centers of $B$ with formula in Appendix B by optimizing $V(x_0)$. For DRC, we slightly modify the one dimensional implementation in section \ref{Shengbo} with below. 
\begin{align*}
&\text{Given: } P = (p_1, \dots, p_m), \quad z_1, \dots, z_m \in \mathbb{R}^d, \quad
B(z) = \sum_{k=1}^d z_k, \quad \delta > 0. \\[6pt]
&\text{Define the tilted distribution: }q_i(\alpha) \;=\; \frac{p_i \, e^{\alpha B(z_i)}}{\sum_{j=1}^m p_j e^{\alpha B(z_j)}}. \\[6pt]
&\text{Then: } \mu_Q(\alpha) \;=\; \sum_{i=1}^m q_i(\alpha) \, z_i 
\;=\; \frac{\sum_{i=1}^m p_i \, z_i \, e^{\alpha B(z_i)}}{\sum_{j=1}^m p_j e^{\alpha B(z_j)}}. \\[6pt]
&D_{\mathrm{KL}}(Q(\alpha) \,\|\, P)
\;=\; \alpha \, \mathbb{E}_{Q(\alpha)}[B(Z)]
\;-\; \log \!\Bigg( \sum_{j=1}^m p_j e^{\alpha B(z_j)} \Bigg). \\[6pt]
&\text{Find } \alpha^* \text{ such that } 
D_{\mathrm{KL}}(Q(\alpha^*) \,\|\, P) = \delta. \\[6pt]
&\text{Get: } \alpha^*, \quad 
\mu_{Q^*} \;=\; \mu_Q(\alpha^*).\\[6pt]
&\text{Using Merton Style Formula to get Optimal Fraction: } \pi^*=\frac{1}{1-\alpha}\Sigma^{-1}(\mu_{Q^*}-r)
\end{align*}

In both DRBC and DRC case, we choose $\delta=0.4$. All other settings are the same as section \ref{Shengbo}.

\begin{remark}
    \citet{wang2023foundation} discusses the distributionally robust control (choose the worst case in every step) formulation in the discrete state space case, and derive conditions to apply the dynamic programming approaches similar to \citet{HansenSargent2001}. We remark that these conditions are assumed in \citet{HansenSargent2001} rather than derived. A takeaway of this is that we may also do the similar theoretical foundation as in \citet{wang2023foundation} and then do the similar steps as in \citet{HansenSargent2001}: derive the HJB equation for the Bayesian problem and then get the HJBI equation for the DRBC formulation, which will have super complicated form and will be hard to solve. This is why we say the DRBC formulation looses the dynamic programming principle and another efficient method is needed to get the optimal policy.
\end{remark}
\end{document}